\newif\ifsingleCol
\newif\ifNonArxivVersion
\newif\iffullversion
    \theoremstyle{definition}                                   
\def\BibTeX{{\rm B\kern-.05em{\sc i\kern-.025em b}\kern-.08em
    T\kern-.1667em\lower.7ex\hbox{E}\kern-.125emX}}
\newcommand{\rev}[1]{{\color{black} #1}}
\newif\ifshowproofs
\newtheorem{theorem}{Theorem}[section]
\newtheorem{lemma}[theorem]{Lemma}
\newtheorem{proposition}[theorem]{Proposition}
\newtheorem{remark}[theorem]{Remark}
\newtheorem{corollary}[theorem]{Corollary}
\newtheorem{definition}[theorem]{Definition}
\newtheorem{example}{Example}
\newtheorem{stassumption}{Assumption}
\definecolor{niceGreen}{rgb}{0.1, 0.625, 0.1}
\DeclareMathOperator*{\argmin}{argmin}
\pgfplotsset{compat = 1.3}
\pgfplotsset{bsoldot/.style={color=cyan,only marks,mark=*}} \pgfplotsset{bholdot/.style={color=cyan,fill=white,only marks,mark=*}}
\pgfplotsset{rsoldot/.style={color=red,only marks,mark=*}} \pgfplotsset{rholdot/.style={color=red,fill=white,only marks,mark=*}}
\tikzset{block/.style={draw, thick, text width=0.8\columnwidth ,minimum height=0.5cm, align=center},   
line/.style={-latex}     
}  
\begin{document}
\title{
Computing Optimal Joint Chance Constrained Control Policies}


\author{Niklas Schmid, Marta Fochesato, Sarah H.Q. Li, Tobias Sutter, John Lygeros
\thanks{N.~Schmid, M.~Fochesato, S.H.Q.~Li and J.~Lygeros are with the Automatic Control Laboratory (IfA), ETH Z\"urich, 8092 Z\"urich, Switzerland
        {\tt\small \{nschmid,mfochesato,huilih,jlygeros\}@ethz.ch}}%
\thanks{T.~Sutter is with the Department of Computer Science, University of Konstanz, 78457 Konstanz, Germany
        {\tt\small \ tobias.sutter@uni-konstanz.de}}%
\thanks{Work supported by the European Research Council under grant 787845 (OCAL) and the Swiss National Science Foundation under NCCR Automation under grant 51NF40\_225155.}%
}

\maketitle


\begin{abstract}
We consider the problem of optimally controlling stochastic, Markovian systems subject to joint chance constraints over a finite-time horizon. For such problems, standard Dynamic Programming is inapplicable due to the time correlation of the joint chance constraints, which calls for non-Markovian, and possibly stochastic, policies. Hence, despite the popularity of this problem, solution approaches capable of providing provably-optimal and easy-to-compute policies are still missing. We fill this gap by augmenting the dynamics via a binary state, allowing us to characterize the optimal policies and develop a Dynamic Programming based solution method.
\end{abstract}

\begin{IEEEkeywords}
Stochastic Optimal Control, Joint Chance Constrained Programming, Dynamic Programming.
\end{IEEEkeywords}

\section{Introduction}\label{sec_introduction}
Many 
control applications come with the requirement for safety certificates, e.g., in air-traffic control \cite{Prandini}, self-driving cars \cite{bojarski2016end} or medicine \cite{mannel_1}. 
Safety is commonly defined via a predefined set of safe states, in the sense that any state trajectory leaving the safe set during the control task is considered unsafe. For stochastic systems, safety can only be guaranteed up to some probability. This leads to the definition of so-called joint chance constraints, i.e., constraints that bound the probability to be unsafe with respect to the entire state trajectory. This is in contrast to stage-wise chance constraints, which bound the probability to be unsafe at every individual time-step. The latter constraint type is especially popular for infinite-horizon problems (e.g., in Model Predictive Control) since the probability of remaining safe at all time-steps over an infinite trajectory is zero under mild assumptions on the dynamics \cite{schmid_2,farina_1}. However, for finite-time settings, as considered in this paper, we argue that it is generally more meaningful to define safety as a guarantee on the entire trajectory. 

Despite the ubiquity of joint chance constraints \cite{farina_1,Wang, wang_2,thorpe_1,bavdekar,raghuraman,Ono_2,paulson,ono_5,patil_1},  their practical deployment is hindered by (i) the difficulty in evaluating multivariate integrals \cite{CCDP}; (ii) the non-convex feasible region described by these constraints; and (iii) the time correlation introduced by the constraints, causing a non-Markovian problem structure \cite{farina_1}. While (i) and (ii) equally affect stage-wise and joint chance constraints, (iii) is specific to joint chance constraints. 

To circumvent this time-correlation, approximations like  Boole's inequality have been proposed \cite{Ono_2}. However, such approximation are known to be conservative, especially for long time horizons \cite{patil_1}. More recently, \cite{Wang} suggests augmenting the state vector with a function space to fit the standard Dynamic Programming (DP) format. However, the resulting formulation is computationally challenging, hindering the development of provably-convergent algorithms. 
On a broader scale, joint chance constraints have also been explored for Model Predictive Control, again utilizing Boole's inequality (or similar bounds) \cite{ono_5,paulson,patil_1} or sampling \cite{wang_2,raghuraman,bavdekar, thorpe_1}. Safety constrained MDPs have further been addressed via reinforcement learning, but lack optimality guarantees \cite{chen_1,xu_1,zhang_1,ding_1,tessler_1}. \rev{Finally, infinite-time temporal logic objectives for multiobjective MDPs have been addressed, but their analysis restricts to finite state and action spaces, see \cite{haesaert2021formal, hahn2019interval, etessami2008multi} and references therein. For example, though \cite{haesaert2021formal} discusses continuous state MDPs, analysis and policy computation is limited to finite abstractions.}
 
In summary, to the best of the authors knowledge, the finite time joint chance constrained optimal control problem has not been analysed for continuous state and action spaces. Further, no solutions have been proposed to break the non-Markovian structure imposed by joint chance constraints without resorting to conservative approximations. We aim to fill this gap using DP leading to a policy that achieves an optimal performance under predefined safety constraints. We highlight the following key results of this paper. \rev{
\begin{enumerate}[label=(\roman*)]
\item \textbf{Computation of optimal policies.} We propose a DP recursion for joint chance constraints in continuous-space, finite-time stochastic optimal control problems, optimizing the trade-off between performance and safety.
\item  \textbf{Characterization of optimal policies.} We show that the optimal policy can be constructed as a mixture of two deterministic Markov policies and requires a binary state capturing whether the past trajectory was safe.
\item \textbf{Behaviour of optimal policies.} Our analysis indicates that joint chance-constrained policies can be "controversial" for many applications.
When all safe inputs yield high expected costs the optimal policies might advertise inputs that are likely to violate safety. 
\end{enumerate}

Unlike \cite{Ono_2, ono_5,paulson,raghuraman} our method does not rely on conservative approximations or sampling techniques \cite{bavdekar,wang_2,thorpe_1}, it is computationally superior to \cite{Wang}, and unlike \cite{haesaert2021formal, hahn2019interval, etessami2008multi} we analyse finite time safety under continuous state and action spaces. Our results trivially extend to finite state and input spaces.}


\iffullversion
\smallskip
\textbf{Notation.} We denote by $\mathbb{1}_{\mathcal{A}}(x)$ the indicator function of a set $\mathcal{A}$, where $\mathbb{1}_{\mathcal{A}}(x)=1$ if $x\in {\mathcal{A}}$ and $\mathbb{1}_{\mathcal{A}}(x)=0$ otherwise. Given two sets $\mathcal{X}, \mathcal{Y}$, the difference between them is denoted by $\mathcal{X}\setminus \mathcal{Y} = \{x\in \mathcal{X}: x\notin \mathcal{Y}\}$, while the complement of a set $\mathcal{X}$ is denoted as $\mathcal{X}^c$. 
We denote by $[N]$ the set $\{0,1,\dots,N\}$ and $\mathbb{R}_{\geq0}$ the non-negative reals. Further, $\land$ and $\lor$ symbolize the logical conjunction and disjunction, respectively.
\fi

\section{Preliminaries and Problem Formulation}\label{sec_problem_formulation}
\textbf{Markov Decision Processes.} We define a safety-constrained discrete-time stochastic system over a finite time-horizon as a tuple $(\mathcal{X}, \mathcal{U}, T, \ell_{0:N}, \mathcal{A})$, where the state space $\mathcal{X}$ and the input space $\mathcal{U}$ are Borel subsets of complete separable metric spaces equipped respectively with $\sigma$-algebras $\mathcal{B}(\mathcal{X})$ and $\mathcal{B}(\mathcal{U})$. 
Given a state $x_k\in\mathcal{X}$ and an input $u_k\in\mathcal{U}$, the Borel-measurable stochastic kernel $T:\mathcal{B}(\mathcal{X})\times\mathcal{X}\times\mathcal{U}\rightarrow[0,1]$ describes the stochastic state evolution, leading to $x_{k+1}\sim T(\cdot|x_k,u_k)$.
Additionally, $\ell_k: \mathcal{X}\times\mathcal{U} \rightarrow \mathbb{R}_{\geq 0}, k\in[N-1]$ and $\ell_N: \mathcal{X} \rightarrow \mathbb{R}_{\geq 0}$ denote measurable, non-negative functions called stage and terminal cost, respectively, which are incurred at every time-step $k\in[N-1]$ and at terminal time $N\in\mathbb{N}$. Finally, let the set $\mathcal{A} \subseteq \mathcal{B}(\mathcal{X})$ denote a safe set. We define a trajectory to be safe, if $x_{0:N}\in\mathcal{A}$. By abuse of notation, we interpret $x_{0:N}\in \mathcal{A}$ to mean $x_k\in \mathcal{A}$ for all $k\in [N]$. 



For $k \in[N-1]$ we define the space of histories up to time $k$ recursively as $\mathcal{H}_k = \mathcal{X}\times\mathcal{U} \times \mathcal{H}_{k-1}$, with $\mathcal{H}_0 = \mathcal{X}$; a generic element $h_k \in {\mathcal{H}_k}$ is of the form $h_k = (x_0, u_0, x_1, u_1, \ldots, x_{k-1}, u_{k-1}, x_k)$. 
 


A stochastic policy is a sequence $\pi=(\mu_0,\dots,\mu_{N-1})$ of Borel-measurable stochastic kernels $\mu_k$, $k\in[N-1]$, that, given $h_k$, assigns a probability measure $\mu_k(\cdot,h_k)$ on the set $\mathcal{B}(\mathcal{U})$. A deterministic policy is the special case $\pi_{\text{d}}=(\mu_0,\dots,\mu_{N-1})$ where $\mu_k: \mathcal{H}_k\rightarrow\mathcal{U}$, $k\in [N-1]$ are simply measurable maps. A policy is further called Markov if $\mu_k(\cdot | h_k) = \mu_k(\cdot | x_k)$ for all $h_k\in \mathcal{H}_k$ and $k\in[N-1]$. Otherwise, it is called causal. We denote the set of stochastic causal, stochastic Markov, deterministic causal and deterministic Markov policies by $\Pi$, $\Pi_{\text{m}}$, $\Pi_{\text{d}}$ and $\Pi_{\text{dm}}$, respectively, and note that $\Pi_{\text{dm}}\subseteq \Pi_{\text{m}}\subseteq \Pi$, and $\Pi_{\text{dm}}\subseteq \Pi_{\text{d}}\subseteq \Pi$.
A mixed policy $\pi_{\text{mix}}$ describes a deterministic policy that is randomly chosen at the initial time-step $k=0$ and then used during the entire control task. \rev{In other words, before applying any control inputs to the system, a deterministic Markov policy is sampled according to some probability law $\pi\sim\mathbb{P}(\Pi_{\text{d}})$, and then this policy $\pi\in\Pi_{\text{d}}$ is used to compute control inputs for all time-steps $k\in[N-1]$. Note that this is in contrast to stochastic policies, where an action is sampled at every time-step from a history dependent distribution.}
More formally, a mixed policy can be defined as in \cite{fainberg1982non}. We endow the set of deterministic policies $\Pi_{\text{d}}$ with a metric topology and define the corresponding Borel $\sigma$-algebra by $\mathcal{G}_{\Pi_{\text{d}}}$. A mixed policy $\pi_{\text{mix}}$ can be seen as a random variable to the measurable space $(\Pi_{\text{d}},\mathcal{G}_{\Pi_{\text{d}}})$. The set of all mixed policies is denoted as $\Pi_{\text{mix}}$. 

\rev{\textbf{Dynamic Programming.} We consider the following standing assumption. }
\begin{stassumption}{}\label{assumption_of_attainability}
    The input set $\mathcal{U}$ is compact. Furthermore, for every $x\in\mathcal{X}$, $k\in [N-1]$ and $\mathcal{A}\in\mathcal{B}(\mathcal{X})$, the transition kernel $T(\mathcal{A}|x,\cdot)$ and stage cost $\ell_k(x,\cdot)$ are continuous.
\end{stassumption}

For a given initial state $x_0\in\mathcal{X}$, policy $\pi\in\Pi$, and transition kernel $T$, a unique probability measure $\mathbb{P}_{x_0}^{\pi}$ for the state-input-trajectory is defined over $\mathcal{B}((\mathcal{X} \times \mathcal{U})^N\times \mathcal{X})$, which can be sampled recursively via $x_{k+1}\!\sim\!T(\cdot|x_k,u_k)$ with $u_k\!\sim\!\mu_k(h_k)$ \cite{Abate_1}. We define the expected cost as  
\begin{align*}
    &\mathbb{E}_{x_0}^\pi 
\left[\ell_N(x_N)+\sum_{k=0}^{N-1}\ell_k(x_k,u_k) \right] =\int\limits_{(\mathcal{X} \times \mathcal{U})^N\times \mathcal{X}}\biggl(\ell_N(x_N)+\\ & \qquad\sum_{k=0}^{N-1} \ell_k(x_k,u_k)\biggl)\mathbb{P}_{x_0}^{\pi}(dx_0,du_0,\dots,dx_N).
\end{align*} Let $C_k^{\star}(x_k)=\allowbreak\inf_{\pi \in \Pi_{\text{dm}}}\mathbb{E}_{x_k}^\pi 
\left[ \ell_N(x_N) + \sum_{j=k}^{N-1} \ell_j(x_j,u_j) \right]$. The finite-time optimal control problem computes $C_0^{\star}(x_0)$, where \cite{Bertsekas_1}
\begin{align}
        C_N^{\star}(x_N)\!&=\!\ell_N(x_N), \label{eq_max_cost_evaluation} \\
         C_k^{\star}(x_k)\!&=\hspace{-3pt}\inf_{u_k\in\mathcal{U}}\!\ell_k(x_k,u_k)\!+\!\int_{\mathcal{X}}\hspace{-3pt}C_{k+1}^{\star}(x_{k+1})T(dx_{k+1}|x_k,u_k). \nonumber
\end{align}
The infimum is attained under Assumption \ref{assumption_of_attainability} \cite{Bertsekas_1} and the resulting deterministic Markov policy is also optimal within the class of stochastic causal policies \cite[Theorem~3.2.1]{Herandez_1}.

The probability of safety is defined as
 $   \mathbb{P}_{x_0}^\pi(x_{0:N}\in \mathcal{A})=\int\displaylimits_{(\mathcal{X} \times \mathcal{U})^N\times \mathcal{X}}\!\prod_{k=0}^{N-1}\! \mathbb{1}_{\mathcal{A}}(x_k)\mathbb{P}_{x_0}^{\pi}(dx_0,du_0,\dots,dx_N).$
We use $V_k^{\star}:\mathcal{X}\rightarrow[0,1]$ as shorthand notation for $V_k^{\star}(x_k)=\sup_{\pi \in \Pi_{\text{dm}}} \mathbb{P}(x_{k:N}\in \mathcal{A}|x_k, \pi)$. Then, following \cite{Abate_1},
\begin{align}
        V_N^{\star}(x_N) &=\mathbb{1}_{\mathcal{A}}(x_N), \label{eq_max_invariance_evaluation}\\
        V_k^{\star}(x_k) &=\sup_{u_k\in\mathcal{U}} \mathbb{1}_\mathcal{A}(x_k)\int_{\mathcal{X}}V_{k+1}^{\star}(x_{k+1})T(dx_{k+1}|x_{k}, u_k). \nonumber
\end{align} where the supremum is attained under Assumption \ref{assumption_of_attainability} \cite{schmid_1}. \rev{The recursions \eqref{eq_max_cost_evaluation} and \eqref{eq_max_invariance_evaluation} can equivalently be used to evaluate the cost and safety of a given policy $\pi$ by restricting $\mathcal{U}=\{\pi(x_k)\}$ for every $x_k\in\mathcal{X}$. We then denote the associated value functions by $C_k^{\pi}$ and $V_k^{\pi}$, $k\in[N]$, respectively.} 

\textbf{Problem Formulation.} Our aim is to minimize the cumulative cost, while guaranteeing a prescribed level of safety over the entire time horizon, expressed as
\begin{equation}\label{eq_problemFormulation_CCOC}
\begin{aligned} 
    \begin{split}
    \inf_{\pi \in \Pi} \quad  &\mathbb{E}_{x_0}^\pi\left[ \ell_N(x_N) + \sum_{k=0}^{N-1} \ell_k(x_k,u_k) \right]\\
    \text{subject to} \quad  & \mathbb{P}_{x_0}^\pi(x_{0:N}\in \mathcal{A})\geq \alpha,
    \end{split}
\end{aligned}
\end{equation}
where $\alpha \in \mathbb{R}_{\geq 0}$ is a user-specified risk tolerance parameter. The constraint realizes the requirement for the state trajectory $x_{0:N}$ to lie in the safe set $\mathcal{A}$ with a probability of at least $\alpha$. 
\section{Joint Chance Constr. Dynamic Programming}
\label{sec_Extact_JCOCC}

\rev{Thanks to the equivalence of stochastic causal and deterministic mixed policies, we solve Problem \eqref{eq_problemFormulation_CCOC} over the policies $\pi_{\text{mix}}\in\Pi_{\text{mix}}$ \cite{fainberg1982non}. 
}

\subsection{Lagrangian Dual Framework}
\label{sec_lagrange_dual_framework}
In line with \cite{Ono_2}, we rely on the Lagrangian dual formulation of Problem \eqref{eq_problemFormulation_CCOC}, which, thanks to the equivalence of mixed and stochastic causal policies \cite{fainberg1982non}, is given by 
\begin{equation*}\label{eq_lagrange_dual_over_stochasticCausal_0}
    \begin{split}
        \max_{\lambda \in \mathbb{R}_{\geq0}} \inf_{\pi \in \Pi_{\text{mix}}}\! &\mathbb{E}_{x_0}^\pi\!\left[ \ell_N(x_N)\!+\!\sum_{k=0}^{N-1}\!\ell_k(x_k,u_k)\right]
         \\&+ \lambda(\alpha\!-\!\mathbb{P}_{x_0}^\pi(x_{0:N}\in \mathcal{A})). \\
     \end{split}
\end{equation*}
Note that the inner infimum can be equivalently cast as
\begin{align}
\begin{split}
        \inf_{\pi \in \Pi_{\text{mix}}}\! &\mathbb{E}_{x_0}^\pi\!\left[ \ell_N(x_N)\!+\!\sum_{k=0}^{N-1}\!\ell_k(x_k,u_k)
     \! -\!\lambda\!\prod_{k=0}^N\!\mathbb{1}_{\mathcal{A}}(x_k)\right]  +\lambda \alpha.
\end{split}
     \label{eq_innerproblem_intro}
\end{align}

Unfortunately, this dual problem is notoriously hard to solve, even in the conceptually much simpler space of deterministic Markov policies. To see this, one might naively try to apply DP by defining the terminal and stage costs as
\begin{equation}
    \begin{split}
    \ell_{\lambda,N}(x_N) &=\ell_N(x_N) -\lambda \prod_{k=0}^N\mathbb{1}_{\mathcal{A}}(x_k) + \lambda \alpha,\\
    \ell_{\lambda,k}(x_k,u_k) &= \ell_k(x_k,u_k).
\end{split}\label{eq_dp_recursion_naive_productsum}
\end{equation}
The terminal cost depends on the full trajectory; information we only have at time-step $N$, but whose distribution depends on the policy at time-steps $k=0,\dots,N-1$. This non-Markov structure prevents using DP. \rev{In the following, we recover the Markovian property via a state augmentation, reformulate the Lagrange dual as a bilevel problem, show strong duality to Problem \eqref{eq_problemFormulation_CCOC}, then present an algorithm to solve this bilevel problem and, by strong duality, also Problem \eqref{eq_problemFormulation_CCOC} (Fig.~\ref{fig_paper_structure}). }
\ifsingleCol
    \begin{figure}
        \centering
        \includegraphics[height=5cm]{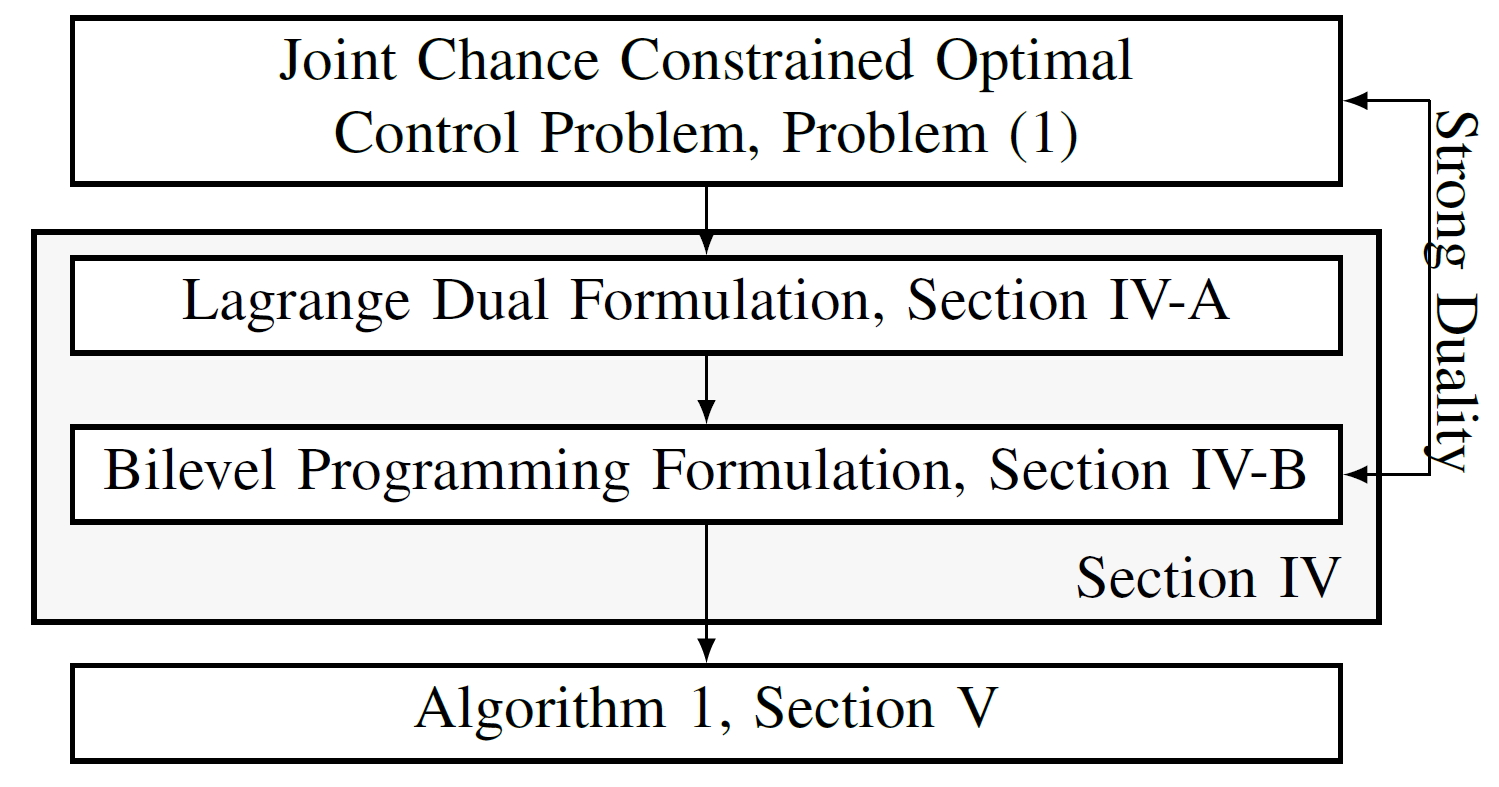}
        \caption{Graphical representation of the paper structure.}
        \label{fig_paper_structure}
    \end{figure}
\else
    \begin{figure}
        \centering
        \begin{tikzpicture}  
        \node[block] (a) {Joint Chance Constrained Problem \eqref{eq_problemFormulation_CCOC}};  
        \node[block,below=0.4 cm of a, fill=white] (c) {Lagrange Dual Formulation \eqref{eq_lambda_optimal_policy_eq}};  
        \node[block,below=0.4 cm of c, fill=white] (d) {Bilevel Programming Formulation \eqref{bilevel}}; 
        \draw[line] (a)-> (c);  
        \draw[line] (c.south)++(0,-0.15)-> (d);  
        \draw[line] (d.north)++(0,0.15) -> (c);  
        \draw [line] (a.east) -- (a.east) ++(.5,0) |- (d.east);
        \draw [line] (a.east)++(.5,0) -- (a.east) ;    
        \draw (4.3,-1.0) node[rotate=-90] {Strong Duality} ;    
        \draw (1.2,-0.5) node[] {Lagrange Dual};
        \draw (1.025,-1.5) node[] {Equivalence};
        \end{tikzpicture} 
        \caption{Graphical representation of the paper structure.}
        \label{fig_paper_structure}
    \end{figure}
\fi

To overcome the non-Markovian structure of Problem \eqref{eq_innerproblem_intro}, we start with an intuitive observation: The product $\prod_{k=0}^N\mathbb{1}_{\mathcal{A}}(x_k)$ in the terminal cost function \eqref{eq_dp_recursion_naive_productsum} depends on the state trajectory up to time $N$, but in the end it simply equals one if all states remained safe, and zero otherwise. We can easily encode this information by introducing a binary state $b_k$ that, inspired by recursion \eqref{eq_max_invariance_evaluation}, is initialized to $\mathbb{1}_{\mathcal{A}}(x_0)$, and is set to zero whenever the trajectory leaves the safe set. Consequently, we can formulate the product in \eqref{eq_dp_recursion_naive_productsum} in terms of the value of the binary state at terminal time and recover the Markovian structure of the problem. More formally, we define as $
\Tilde{\mathcal{X}}= \mathcal{X}\times\{0,1\}$ and
    $\Tilde{x}_k = (x_k, b_k)\in\Tilde{\mathcal{X}}$, with $b_0=\mathbb{1}_{\mathcal{A}}(x_0)$ and
    $b_{k+1} = \mathbb{1}_{\mathcal{A}}(x_{k+1})b_{k}$, leading to the overall dynamics $\Tilde{T}:\mathcal{B}(\Tilde{\mathcal{X}})\times\Tilde{\mathcal{X}}\times\mathcal{U}\rightarrow[0,1]$,
\iffullversion
\begin{align*}
    &\Tilde{T}(\Tilde{x}_{k+1}|\Tilde{x}_k,u_k) \\&\!=\!\begin{cases}
    T(x_{k+1}|x_k,\!u_k)\!& \text{if} \ (b_{k}\!=\!0 \land b_{k+1}\!=\!0) \\& \ \ \ \lor \  (b_{k}=1 \land b_{k+1}\!=\!\mathbb{1}_{\mathcal{A}}(x_{k+1})) \\
    0 & \text{otherwise},
\end{cases}
\end{align*}
or equivalently 
\fi
\begin{align*}
    \Tilde{T}(\Tilde{x}_{k+1}|\Tilde{x}_k,u_k) &= 
    T(x_{k+1}|x_k,u_k) \big((1-b_{k})(1-b_{k+1}) \\& \quad + b_k(1-|b_{k+1}-\mathbb{1}_{\mathcal{A}} (x_{k+1})|)\big),
\end{align*}
which is again measurable.
For readability, we overload the notation $\Tilde{T}(\Tilde{x}_{k+1}|\Tilde{x}_k,u_k)=T(\Tilde{x}_{k+1}|\Tilde{x}_k,u_k)$, $\mathbb{1}_{\mathcal{A}}(\Tilde{x}_k)=\mathbb{1}_{\mathcal{A}}(x_k)$ and $\ell_N(\tilde{x}_N)=\ell_N(x_N), \ell_k(\tilde{x}_k,u_k)=\ell_k(x_k,u_k)$. Policies are defined as before replacing $\mathcal{X}$ with $\Tilde{\mathcal{X}}$. 

The expected product of the set indicator functions in \eqref{eq_innerproblem_intro} is now given by the expected value of the binary state,
$\mathbb{E}_{\tilde{x}_{0}}^\pi\left[\prod_{k=0}^N\mathbb{1}_{\mathcal{A}}(\Tilde{x}_{k}) \right]=\mathbb{E}_{\tilde{x}_{0}}^\pi\left[ b_N \right]$, and \eqref{eq_innerproblem_intro} becomes
\begin{equation}\label{eq_lagrange_dual_over_stochasticCausal}
    \begin{split}
        \max_{\lambda \in \mathbb{R}_{\geq0}} \inf_{\pi \in \Pi_{\text{mix}}}\hspace{-7pt}\mathbb{E}_{\tilde{x}_{0}}^\pi\!\Big[ \ell_N(\tilde{x}_N)\!+\hspace{-4pt}\sum_{k=0}^{N-1}\!\ell_k(\tilde{x}_k,u_k)
         \!+\!\lambda(\alpha\!-\!b_N)\Big]. 
     \end{split}
\end{equation}
The inner minimization problem is now an MDP whose optimal solution for a fixed $\lambda$ is attained by deterministic Markov policies.
\rev{This state-augmentation trick can be interpreted as constructing a product MDP with an automaton, which describes the safety specification and is similarly done in \cite{etessami2008multi, hahn2019interval, haesaert2021formal} for temporal logic specifications.}



\subsection{Bilevel Framework}
\label{sec_bilevel_programming_approach}
To simplify notation, note that using the equivalence $\mathbb{E}_{\tilde{x}_{0}}^{\pi}\left[ b_N \right] = V_0^\pi(\tilde{x}_0)$, Problem \eqref{eq_lagrange_dual_over_stochasticCausal} reduces to
\iffullversion
\begin{align}
    \begin{split}
        &\max_{\lambda \in \mathbb{R}_{\geq0}}\inf_{\pi \in \Pi_{\text{mix}}} \hspace{-7pt}\mathbb{E}_{\tilde{x}_{0}}^\pi\left[ \ell_N(\tilde{x}_N)\!+\hspace{-4pt}\sum_{k=0}^{N-1} \ell_k(\tilde{x}_k,u_k)
         -\lambda b_N\right]  + \lambda \alpha \\
         =\ &\max_{\lambda \in \mathbb{R}_{\geq0}}\inf_{\pi \in \Pi_{\text{mix}}} C_0^{\pi}(\tilde{x}_0) - \lambda V_0^{\pi}(\tilde{x}_0) + \lambda \alpha.
    \end{split}
    \label{eq_lambda_optimal_policy_eq}
\end{align}
\else
    \begin{align}
    \begin{split}
        \max_{\lambda \in \mathbb{R}_{\geq0}}\inf_{\pi \in \Pi_{\text{mix}}} C_0^{\pi}(\tilde{x}_0) - \lambda V_0^{\pi}(\tilde{x}_0) + \lambda \alpha.
    \end{split}
    \label{eq_lambda_optimal_policy_eq}
    \end{align}
\fi
Intuitively, this means that the inner Problem reduces to finding a policy that minimizes the expected cost while being rewarded by $-\lambda$ for remaining safe. 

\iffullversion 
Referring to Appendix \ref{sec_duality} on duality, we note the following: %
\else
Based on Lagrange duality theory, we note the following: %
\fi
We have the constraint $g(\pi)= \alpha - V_0^{\pi}(\tilde{x}_0)$ and Lagrangian 
         $q(\lambda)=\inf_{\pi \in \Pi_{\text{mix}}} C_0^{\pi}(\tilde{x}_0) - \lambda V_0^{\pi}(\tilde{x}_0)+ \lambda \alpha,$
which is a concave function in $\lambda$ and where we will show later that the infimum is attained by some $\pi_{\lambda}\in\Pi_{\text{mix}}$. Then $\alpha - V_0^{\pi_{\lambda}}\in\partial q(\lambda)$. Hence, if $V_0^{\pi_{\lambda}}<\alpha$ then $\lambda\leq\lambda^{\star}$ and $\lambda\geq\lambda^{\star}$ otherwise. 
\iffullversion
A graphical illustration is given in Fig.~\ref{fig_subgradients_dual}.
\ifsingleCol
    \begin{figure}[!tb]
    \centering
        \includegraphics[height=3.5cm]{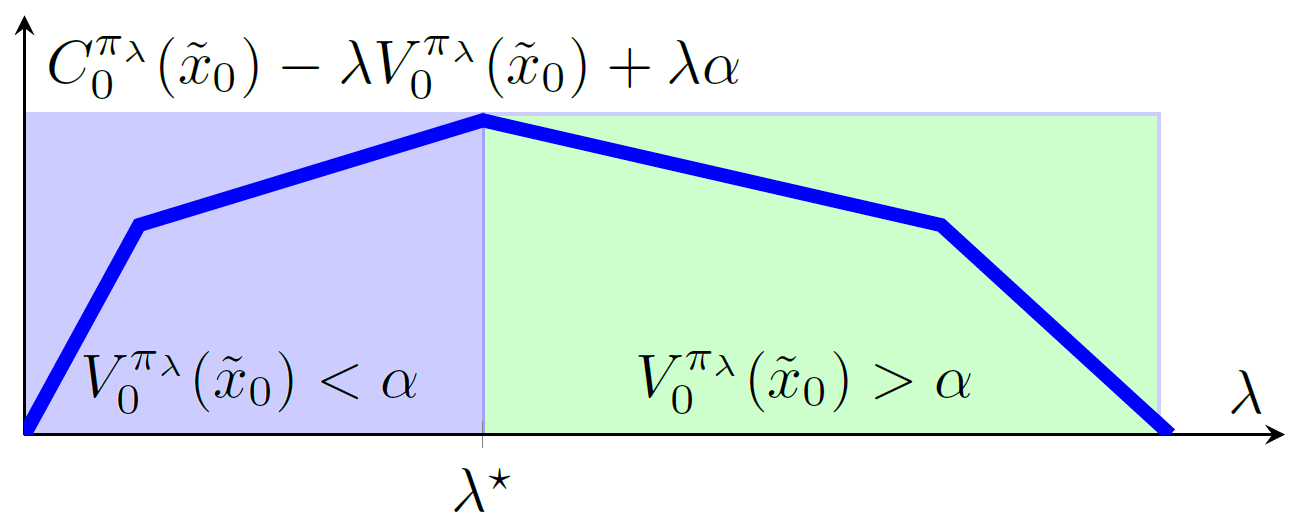}
        \caption{An illustration of a objective function for the outer maximization in Problem \eqref{eq_lagrange_dual_over_stochasticCausal} (inspired by \cite{Ono_2}). The policy $\pi_{\lambda}$ denotes an optimal argument to the inner minimization of Problem \eqref{eq_lagrange_dual_over_stochasticCausal} under given $\lambda$ (assuming it exists) and $C_0^{\pi_{\lambda}}(\Tilde{x}_0),V_0^{\pi_{\lambda}}(\Tilde{x}_0)$ the cost and safety associated with that policy.} 
        \label{fig_subgradients_dual} 
    \end{figure}
\else
    \begin{figure}[!tb]
    \centering
        \begin{tikzpicture}
            \filldraw[draw=blue, fill=green, opacity=0.2] (2.65,0) rectangle ++(3.9,1.85);
            \filldraw[draw=blue, fill=blue, opacity=0.2] (0,0) rectangle ++(2.65,1.85);
            \begin{axis}[width=\columnwidth,height=4cm, xlabel={$\lambda$}, ylabel={$C^{\pi_{\lambda}}_0(\Tilde{x}_0)-\lambda V^{\pi_{\lambda}}_0(\Tilde{x}_0)+\lambda\alpha$}, xtick={0,2}, ytick={0,1}, axis x line = bottom,axis y line = left, axis lines=middle,xticklabels={0,$\lambda^{\star}$},ymax=3, xmax=5.5]
            \addplot +[line width=0.8mm, mark=none] coordinates {(0,1) (.5,2) (2,2.5) (4,2) (5,1)};
            \end{axis};
            \draw (1.3,0.3) node {$V_0^{\pi_{\lambda}}(\Tilde{x}_0)<\alpha$};
            \draw (4.5,0.3) node {$V_0^{\pi_{\lambda}}(\Tilde{x}_0)>\alpha$};
        \end{tikzpicture}
        \caption{An illustration of a objective function for the outer maximization in Problem \eqref{eq_lagrange_dual_over_stochasticCausal} (inspired by \cite{Ono_2}). The policy $\pi_{\lambda}$ denotes an optimal argument to the inner minimization of Problem \eqref{eq_lagrange_dual_over_stochasticCausal} under given $\lambda$ (assuming it exists) and $C_0^{\pi_{\lambda}}(\Tilde{x}_0),V_0^{\pi_{\lambda}}(\Tilde{x}_0)$ the cost and safety associated with that policy.} 
        \label{fig_subgradients_dual} 
    \end{figure}
\fi
\fi
\rev{In other words, choosing $\lambda$ larger than some maximizing $\lambda^{\star}$ for $q(\cdot)$ yields the safety probability of the policy ${\pi_{\lambda}}$ at least $\alpha$; choosing $\lambda$ smaller than $\lambda^{\star}$ yields the safety probability at most $\alpha$. Hence, to find $\lambda^{\star}$, we can search for the smallest $\lambda$ which results in a policy ${\pi_{\lambda}}$ with a safety of at least $\alpha$. Problem \eqref{eq_lambda_optimal_policy_eq} can thus be formulated as the bilevel program}
\begin{equation}\label{bilevel}
    \begin{split}
        \min_{\lambda\in\mathbb{R}_{\geq 0}} & \ \lambda\\
        \text{subject to} & \quad \pi_{\lambda} \in {\argmin}_{\pi\in\Pi_{\text{mix}}}  C_0^{\pi}(\Tilde{x}_0) - \lambda V_0^{\pi}(\Tilde{x}_0) \\
        &\quad   V_0^{\pi_{\lambda}}(\Tilde{x}_0)\geq \alpha,
    \end{split}
\end{equation}
where we drop the term $\lambda \alpha$ in the first constraint of Problem \eqref{eq_lambda_optimal_policy_eq} since it is a constant and does not affect the optimal argument $\pi_{\lambda}$. \rev{
The structure of \eqref{bilevel} motivates our algorithm: we first compute $\pi_{\lambda}$ and then use bisection to find the lowest $\lambda$ that generates a policy $\pi_{\lambda}$ with a safety of at least $\alpha$.
We show strong duality between \eqref{eq_problemFormulation_CCOC} and \eqref{eq_lambda_optimal_policy_eq} via the equivalence of \eqref{eq_problemFormulation_CCOC} and \eqref{bilevel} at the end of this section.
}

 
\subsubsection{Attainability of Optimal Deterministic Policies}
We first restrict ourselves to deterministic policies and solve
\begin{align}
    \inf_{\pi \in \Pi_{\text{d}}} C_0^{\pi}(\tilde{x}_0) - \lambda V_0^{\pi}(\tilde{x}_0).
    \label{eq_inner_problem_definedViaCV}
\end{align}
Since the problem is an MDP, it is optimally solved by a deterministic Markov policy \cite[Theorem~3.2.1]{Herandez_1}, which can be computed via the DP recursion 
\begin{align}
    J_N^{\star}(\Tilde{x}_N)\!&=\!\ell_N(\Tilde{x}_N)\!-\!\lambda b_N,\label{eq_our_dp_recursion_innerdual} \\
    J_k^{\star}(\Tilde{x}_k)\!&=\hspace{-.5em}\inf_{u_k\in\mathcal{U}} \hspace{-.2em}\ell_k(\Tilde{x}_k,u_k)\!+\!\int_{\Tilde{\mathcal{X}}} \hspace{-.5em}J_{k+1}^{\star}(\Tilde{x}_{k+1})\tilde{T}(d\Tilde{x}_{k+1}|\Tilde{x}_k,u_k). \nonumber
\end{align}

\iffullversion
\begin{theorem}\label{th_recursion}(Attainability of Deterministic Markov Policies) 
\else
\begin{theorem}\label{th_recursion}(Attainability of Deterministic Markov Policies) 
\fi
Given a fixed $\lambda \in \mathbb{R}_{\geq 0}$, there exists a measurable deterministic Markov policy that attains the infimum in \eqref{eq_our_dp_recursion_innerdual} at every time-step $k\in[N]$ and is also an optimal solution to Problem \eqref{eq_inner_problem_definedViaCV}. The resulting $J_k^{\star}$ is measurable for all $k \in [N]$. 
\label{thm_our_Dp-recursion}
\end{theorem}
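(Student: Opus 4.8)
The plan is to prove the statement by backward induction on $k$, at each step verifying the hypotheses under which the classical measurable-selection dynamic programming theorem (as underlying \cite[Theorem~3.2.1]{Herandez_1} and \cite{Bertsekas_1}) applies to the augmented Markov decision process with state space $\Tilde{\mathcal{X}}$, kernel $\Tilde{T}$, stage costs $\ell_k$ and terminal cost $\ell_N(\tilde x_N)-\lambda b_N$. Concretely, I would carry along the induction hypothesis that $J_{k+1}^{\star}$ is Borel measurable and satisfies $J_{k+1}^{\star}(\tilde x_{k+1})\geq -\lambda$ for all $\tilde x_{k+1}$; the latter bound propagates because the $\ell_k$ are non-negative, and it guarantees that every integral appearing in \eqref{eq_our_dp_recursion_innerdual} is well defined (possibly $+\infty$, but bounded below). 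The base case is immediate: $J_N^{\star}(\tilde x_N)=\ell_N(x_N)-\lambda b_N$ is measurable and $\geq-\lambda$.

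For the inductive step, fix $\lambda$ and write $H_k(\tilde x_k,u_k):=\ell_k(\tilde x_k,u_k)+\int_{\Tilde{\mathcal{X}}}J_{k+1}^{\star}(\tilde x_{k+1})\,\Tilde{T}(d\tilde x_{k+1}|\tilde x_k,u_k)$. I would check that (i) $H_k(\cdot,u_k)$ is Borel measurable for each $u_k$, since $\ell_k$ is measurable and $\Tilde{T}$ is a Borel stochastic kernel integrated against the measurable function $J_{k+1}^{\star}$; and (ii) $H_k(\tilde x_k,\cdot)$ is lower semicontinuous on the compact set $\mathcal{U}$. The key point for (ii) is that $\Tilde{T}$ inherits setwise continuity in the control from $T$: by the defining formula for $\Tilde{T}$, for any Borel $\Tilde{\mathcal{B}}\subseteq\Tilde{\mathcal{X}}$ the map $u_k\mapsto\Tilde{T}(\Tilde{\mathcal{B}}|\tilde x_k,u_k)$ is a finite linear combination of maps $u_k\mapsto T(\mathcal{C}|x_k,u_k)$ with $\mathcal{C}\in\mathcal{B}(\mathcal{X})$ and coefficients not depending on $u_k$, hence continuous by Assumption~\ref{assumption_of_attainability}. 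Approximating bounded measurable functions uniformly by simple functions then shows $u_k\mapsto\int g\,\Tilde{T}(d\tilde x_{k+1}|\tilde x_k,u_k)$ is continuous for every bounded measurable $g$; writing $J_{k+1}^{\star}+\lambda\geq 0$ as the monotone limit of bounded measurable functions and applying monotone convergence gives that $u_k\mapsto\int J_{k+1}^{\star}\,\Tilde{T}(d\tilde x_{k+1}|\tilde x_k,u_k)$ is lower semicontinuous; adding the continuous (hence l.s.c.) map $\ell_k(\tilde x_k,\cdot)$ yields (ii). By compactness of $\mathcal{U}$ the infimum defining $J_k^{\star}(\tilde x_k)$ is attained for every $\tilde x_k$, and $J_k^{\star}\geq 0+(-\lambda)=-\lambda$.

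Properties (i)--(ii) with $\mathcal{U}$ compact are exactly the hypotheses of the measurable selection theorem, which then provides a Borel-measurable selector $\mu_k^{\star}:\Tilde{\mathcal{X}}\to\mathcal{U}$ with $H_k(\tilde x_k,\mu_k^{\star}(\tilde x_k))=\inf_{u_k\in\mathcal{U}}H_k(\tilde x_k,u_k)=J_k^{\star}(\tilde x_k)$ and $J_k^{\star}$ Borel measurable, closing the induction. Finally, assembling $\pi^{\star}=(\mu_0^{\star},\dots,\mu_{N-1}^{\star})\in\Pi_{\text{dm}}$ and invoking the optimality part of the dynamic programming theorem \cite[Theorem~3.2.1]{Herandez_1} for the augmented MDP gives $C_0^{\pi^{\star}}(\tilde x_0)-\lambda V_0^{\pi^{\star}}(\tilde x_0)=J_0^{\star}(\tilde x_0)=\inf_{\pi\in\Pi_{\text{d}}}C_0^{\pi}(\tilde x_0)-\lambda V_0^{\pi}(\tilde x_0)$, so $\pi^{\star}$ solves Problem \eqref{eq_inner_problem_definedViaCV}.

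I expect the main obstacle to be step (ii): because the stage costs are only assumed non-negative and continuous in the control (not bounded), one cannot directly quote a bounded-cost DP result and must instead exploit the uniform lower bound $J_k^{\star}\geq-\lambda$ together with a monotone-approximation argument to transfer setwise continuity of $T$ through the integral, while separately confirming that the state augmentation does not destroy this continuity; the remaining measurability and optimality claims are bookkeeping built on the cited standard results.
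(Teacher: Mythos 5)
Your proof is correct and follows essentially the same route as the paper: backward induction, decomposing $\Tilde{T}(\cdot|\tilde x_k,u_k)$ into finitely many terms of the form $T(\mathcal{C}|x_k,\cdot)$ to inherit setwise continuity in the control, compactness of $\mathcal{U}$ for attainment, and a measurable-selection theorem in the style of \cite{Brown_1} to close the induction as in \cite[Prop.~1]{Kariotoglou_1}. The one place where you genuinely improve on the paper's argument is the treatment of unboundedness: the paper asserts outright that $u_k\mapsto\int J_{k+1}^{\star}\,\Tilde{T}(d\tilde x_{k+1}|\tilde x_k,u_k)$ is \emph{continuous} (via \cite[Fact~3.9]{Nowak_1}), which strictly requires the integrand to be bounded, whereas the stage costs here are only non-negative and measurable; your replacement of continuity by lower semicontinuity, obtained from the uniform lower bound $J_{k+1}^{\star}\geq-\lambda$ and monotone approximation by truncations, is the standard and correct way to handle this and still yields attainment of the infimum on the compact set $\mathcal{U}$. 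One loose end: conditions (i) measurability in $\tilde x_k$ for each fixed $u_k$ and (ii) lower semicontinuity in $u_k$ do not by themselves put you in the hypotheses of the selection theorems you invoke, which require \emph{joint} Borel measurability of $H_k$ (for merely l.s.c.\ sections the infimum over a countable dense subset of $\mathcal{U}$ need not equal the true infimum, so the Carath\'eodory shortcut fails). This is easily patched with machinery you already have: each truncated integrand produces a map that is measurable in $\tilde x_k$ and continuous in $u_k$, hence jointly measurable since $\mathcal{U}$ is separable metric, and $H_k$ is the pointwise monotone limit of such maps plus the jointly measurable $\ell_k$; with joint measurability, l.s.c.\ sections and compact $\mathcal{U}$, the selector and the measurability of $J_k^{\star}$ follow from \cite[Corollary~1]{Brown_1} exactly as in the paper.
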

    \begin{proof}
        \rev{Let $g_k(\Tilde{x}_k,u_k)=\ell_k(\Tilde{x}_k,u_k)$, $g_N(\Tilde{x}_N)=\ell_N(\Tilde{x}_N)-\lambda b_N$, which are measurable and continuous in $u_k$ for all $\Tilde{x}_k\in\Tilde{\mathcal{X}}$ by Assumption 1. Then,
        \begin{align*}
            \inf_{\pi \in \Pi_{\text{d}}}\hspace{-4pt}C_0^{\pi}(\tilde{x}_0)\!-\!\lambda V_0^{\pi}(\tilde{x}_0)\!=\hspace{-4pt}\inf_{\pi \in \Pi_{\text{d}}}\hspace{-2pt}\mathbb{E}_{\tilde{x}_{0}}^\pi\!\left[ g_N(\tilde{x}_N)\!+\hspace{-4pt}\sum_{k=0}^{N-1}\!g_k(\tilde{x}_k,u_k)\right]\hspace{-2pt},
        \end{align*}
        which is solved by the DP recursion \cite{Bertsekas_1}
        \begin{align*}
            J_N^{\star}(\Tilde{x}_N) 
            &=  g_N(\Tilde{x}_N) = l_N(\Tilde{x}_N) - \lambda b_N\\
            J_k^{\star}(\Tilde{x}_k) &=\inf_{u_k\in\mathcal{U}}g_k(\Tilde{x}_k, u_k)\!+\hspace{-3pt}\int_{\Tilde{\mathcal{X}}}\hspace{-4pt}J_{k+1}^{\star}(\Tilde{x}_{k+1})\Tilde{T}(d\Tilde{x}_{k+1}|\Tilde{x}_k,u_k).
        \end{align*}
        }
    \iffullversion
       To complete the proof, we follow the induction argument in \cite[Prop.~1]{Kariotoglou_1}. Assume $J_{k+1}^{\star}(\cdot)$ is measurable and let
        $F(\Tilde{x}_k,u_k)=g_k(\Tilde{x}_k,u_k) + \int_{\Tilde{\mathcal{X}}}J_{k+1}^{\star}(\Tilde{x}_{k+1})\tilde{T}(d\Tilde{x}_{k+1}|\Tilde{x}_k,u_k)$. Since $g_k(\Tilde{x}_k,u_k)$ and $\tilde{T}(\Tilde{x}_{k+1}|x_k,u_k)$ are continuous in $u_k$, $F(\Tilde{x}_k,u_k)$ is continuous in $u_k$ for every $\Tilde{x}_k\in\Tilde{\mathcal{X}}$ \cite[Fact~3.9]{Nowak_1}. Since $\mathcal{U}$ is compact, the supremum is thus attained by a measurable map $\mu^{\star}_k(\cdot)$ \cite[Corollary~1]{Brown_1}. 
        Using the fact that $F(\cdot,\cdot)$ and $\mu^{\star}_k(\cdot)$ are measurable, we have that $F(\cdot,\mu^{\star}_k(\cdot))$ is measurable; since 
        $g_k(\cdot,\cdot)$ is measurable,  $J_k^{\star}(\cdot)$ is measurable \cite[Prop.~7.29]{Bertsekas_1}. As $J_N^{\star}(\cdot)=g_N(\cdot)$ is measurable, it follows by induction that $J_k^{\star}(\cdot)$ is measurable and the infimum is attained by a measurable map $\mu^{\star}_k(\cdot)$ for all $k \in [N-1]$.
    \else%
        \rev{Due to the compactness of $\mathcal{U}$ (Assumption 1), the measurability of $g_{0:N}$ and continuity of $g_k(\Tilde{x}_k,\cdot)$, $\tilde{T}(B|\Tilde{x}_k,\cdot)$ for all $\Tilde{x}_k\in\tilde{\mathcal{X}}$, $B\in\mathcal{B}(\tilde{\mathcal{X}})$, $k\in[N-1]$, the minimizing input is attained at every state and time-step, and the resulting value functions are measurable (see \cite{schmid2023computing} and \cite[Prop.~1]{Kariotoglou_1}).}
    \fi
    \end{proof} 
\subsubsection{Attainability of Optimal Mixed Policies}

Throughout the remainder of the paper, we denote by $\overline{C}(\Tilde{x}_0),\overline{V}(\Tilde{x}_0)$ the control cost and safety associated with the maximum safety recursion \eqref{eq_max_invariance_evaluation} and $\underline{C}(\Tilde{x}_0),\underline{V}(\Tilde{x}_0)$ the control cost and safety associated with the minimum cost recursion \eqref{eq_max_cost_evaluation}. 
\begin{definition}
For a given initial condition $\Tilde{x}_0$, $\overline{C}(\Tilde{x}_0)$ bounded, we define the Performance Set attainable under any policy class $\Gamma$ as 
\begin{align*}
    P_{\Gamma,\Tilde{x}_0}=\bigcup_{\pi\in\Gamma}\left\{\left(C_0^{\pi}(\Tilde{x}_0),V_0^{\pi}(\Tilde{x}_0)\right)\right\}
\end{align*}
and the Pareto front $P^{\star}_{\Gamma,\Tilde{x}_0}\!:\![\underline{V}(\Tilde{x}_0),\overline{V}(\Tilde{x}_0)]\!\rightarrow\![\underline{C}(\Tilde{x}_0),\overline{C}(\Tilde{x}_0)]$, 
\begin{equation*}
\begin{array}{lcl}
P_{\Gamma,\Tilde{x}_0}^{\star}(p) = &  \underset{C,V}{\inf} &  C\\ 
        &  \text{subject to} &  (C,V) \in P_{\Gamma,\Tilde{x}_0}\\
         & &  V \geq p.
\end{array}
\end{equation*}
Further, given $\lambda\in\mathbb{R}$, we denote as $\lambda$-optimal under the policy class $\Gamma$ any policy $\pi_{\lambda}\in\Gamma_{\lambda}$ with $\Gamma_{\lambda}= \{\pi'\in\Gamma: \pi'\in{\argmin}_{\pi\in\Gamma}  C_0^{\pi}(\Tilde{x}_0) - \lambda V_0^{\pi}(\Tilde{x}_0)\}$.
\end{definition}
\smallskip
The Pareto front comprises the points in $P_{\Gamma,\Tilde{x}_0}$, for which no cost reduction can be achieved without sacrificing safety and vice versa. Thus, it is monotone non-decreasing by definition (see also Fig.~\ref{fig_pareto_front_definition} for a graphical illustration). 
\smallskip
\begin{proposition}
    \label{thm_convexity_of_mixed_performance_sets} (Reduction to Mixed Policies)
    The Performance Set associated with mixed policies $P_{\Pi_{\text{mix}},\Tilde{x}_0}$ is the convex hull of the Performance Set $P_{\Pi_{\text{d}},\Tilde{x}_0}$ of deterministic causal policies.  
\end{proposition}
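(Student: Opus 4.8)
The plan is to exploit that a mixed policy is nothing but a probability law $\mathbb{P}$ on the space $(\Pi_{\text{d}},\mathcal{G}_{\Pi_{\text{d}}})$, and that performance is \emph{linear} in this law. First I would record the mixture identity $\mathbb{P}_{\tilde{x}_0}^{\pi_{\text{mix}}}=\int_{\Pi_{\text{d}}}\mathbb{P}_{\tilde{x}_0}^{\pi}\,\mathbb{P}(d\pi)$, which holds by the very construction of a mixed policy (draw $\pi\sim\mathbb{P}$ at $k=0$, then roll it out) together with measurability of $\pi\mapsto\mathbb{P}_{\tilde{x}_0}^{\pi}(B)$ from the framework of \cite{fainberg1982non}. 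Since $C_0^{\pi}(\tilde{x}_0)$ and $V_0^{\pi}(\tilde{x}_0)$ are integrals of the fixed measurable maps $\ell_N+\sum_k\ell_k$ and $\prod_k\mathbb{1}_{\mathcal{A}}$ against the trajectory law, this immediately gives $C_0^{\pi_{\text{mix}}}(\tilde{x}_0)=\int_{\Pi_{\text{d}}}C_0^{\pi}(\tilde{x}_0)\,\mathbb{P}(d\pi)$ and likewise for $V_0$. Hence, writing $\Phi(\pi)=(C_0^{\pi}(\tilde{x}_0),V_0^{\pi}(\tilde{x}_0))\in\mathbb{R}^2$, the point $(C_0^{\pi_{\text{mix}}}(\tilde{x}_0),V_0^{\pi_{\text{mix}}}(\tilde{x}_0))$ is exactly the barycenter of the pushforward measure $\Phi_{\ast}\mathbb{P}$, which is supported on $S:=P_{\Pi_{\text{d}},\tilde{x}_0}$.

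For the inclusion $\operatorname{conv}(P_{\Pi_{\text{d}},\tilde{x}_0})\subseteq P_{\Pi_{\text{mix}},\tilde{x}_0}$ I would invoke Carath\'eodory's theorem in $\mathbb{R}^2$: any $p\in\operatorname{conv}(S)$ equals $\sum_{i=1}^{3}\theta_i\Phi(\pi_i)$ for deterministic causal policies $\pi_i$ and weights $\theta_i\ge0$ with $\sum_i\theta_i=1$. The finitely supported law $\mathbb{P}=\sum_i\theta_i\delta_{\pi_i}$ is a legitimate mixed policy, and by the linearity identity above its performance equals $\sum_i\theta_i\Phi(\pi_i)=p$; this direction is routine. (Equivalently, one notes that $P_{\Pi_{\text{mix}},\tilde{x}_0}$ is convex — mixing two mixed policies yields a mixed policy — and contains $P_{\Pi_{\text{d}},\tilde{x}_0}$ via Dirac laws.)

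The reverse inclusion $P_{\Pi_{\text{mix}},\tilde{x}_0}\subseteq\operatorname{conv}(P_{\Pi_{\text{d}},\tilde{x}_0})$ is the substantive part. Given $\pi_{\text{mix}}$ with law $\mathbb{P}$, its performance is the barycenter $b=\int\Phi\,d\mathbb{P}$ of the probability measure $\nu:=\Phi_{\ast}\mathbb{P}$ on $S\subseteq\mathbb{R}^2$ (its first moment is finite since $V_0\in[0,1]$ and, modulo the standard caveat of restricting to policies of finite expected cost, $\mathbb{E}[C_0]<\infty$; the infinite case being trivial as $C_0\ge0$). I would then prove the elementary but slightly delicate fact that the barycenter of any probability measure with finite first moment on a set $S\subseteq\mathbb{R}^n$ lies in $\operatorname{conv}(S)$ — note this is $\operatorname{conv}$, not merely its closure — by induction on $n$: if $b\notin\operatorname{conv}(S)$, separate $b$ from the convex set $\operatorname{conv}(S)$; a strict separation gives $\langle c,b\rangle<\inf_{x\in S}\langle c,x\rangle\le\int\langle c,x\rangle\,d\nu=\langle c,b\rangle$, a contradiction, while if only a supporting hyperplane $H$ through $b$ is available, then $\int(\langle c,x\rangle-\langle c,b\rangle)\,d\nu=0$ with non-negative integrand forces $\nu(S\setminus H)=0$, so $b$ is the barycenter of a measure on the lower-dimensional set $S\cap H$ and the induction hypothesis applies. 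Applying this with $n=2$ yields $b\in\operatorname{conv}(S)$, which closes the argument.

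I expect the main obstacle to be exactly this last step: a mixed policy may randomize over an uncountable family of deterministic policies according to an arbitrary, not finitely supported, law, so its performance is a genuine barycenter and one must show it remains inside the (generally non-closed) convex hull rather than merely its closure. The surrounding bookkeeping — justifying the mixture representation of the trajectory law and the consequent linearity of $C_0^{\pi},V_0^{\pi}$ in the policy law, measurability of $\Phi$, and the finite-expected-cost caveat — is comparatively routine but should be stated carefully.
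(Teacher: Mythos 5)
Your proof is correct, and on the substantive direction it is genuinely more careful than the paper's. The paper's argument for $P_{\Pi_{\text{mix}},\tilde{x}_0}\subseteq\operatorname{conv}(P_{\Pi_{\text{d}},\tilde{x}_0})$ silently assumes the mixing law is finitely supported (``let $\pi_1,\dots,\pi_M$ be sampled with probabilities $\eta_1,\dots,\eta_M$'') and concludes by taking the finite convex combination of the $(C_i,V_i)$; you instead treat an arbitrary probability law on $(\Pi_{\text{d}},\mathcal{G}_{\Pi_{\text{d}}})$, identify the mixed policy's performance as the barycenter of the pushforward measure on $\mathbb{R}^2$, and prove the barycenter-lies-in-the-convex-hull lemma (not merely its closure) by induction on dimension via separation and supporting hyperplanes. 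That extra lemma is exactly what is needed to cover non-finitely-supported mixtures, which the paper's definition of $\Pi_{\text{mix}}$ as random variables into $(\Pi_{\text{d}},\mathcal{G}_{\Pi_{\text{d}}})$ explicitly allows, so your version closes a real gap rather than adding decoration; the price is the measurability and finite-first-moment bookkeeping you correctly flag. For the reverse inclusion both arguments are the same finite-mixture construction, except that you invoke Carath\'eodory with three points in $\mathbb{R}^2$, whereas the paper asserts that two points of $P_{\Pi_{\text{d}},\tilde{x}_0}$ suffice to generate the convex hull --- a claim that is false for general planar sets (three non-collinear points give a triangle whose interior is not a pairwise convex combination) and on which the paper also leans for Corollary \ref{cor_mixing_two_policies_is_enough}; your three-point version is the safe one for the proposition as stated.
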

\iffullversion
\begin{proof}
    \rev{Similar results have been reported for finite state and action MDPs \cite{altman2021constrained}. For completeness, we provide a proof for continuous state and action MDPs.}

    We first show that the performance of any mixed policy is contained in the convex hull of the Performance Set $\text{Conv}(P_{\text{d},\Tilde{x}_0})$ of deterministic causal policies.
    Recall that a mixed policy is defined as a mixture of deterministic causal policies. Let $\pi_1,\dots,\pi_M\in\Pi_{\text{d}}$, which are sampled with probabilities $\eta_1,\dots,\eta_M\in\mathbb{R}_{\geq 0}$, where $\sum_{i=0}^M\eta_i=1$. Let $(C_1,V_1),\dots,(C_M,V_M)\in P_{\text{d},\Tilde{x}_0}$ be the pairs of control cost and safety associated with $\pi_1,\dots,\pi_M$. Then the corresponding mixed policy has safety $V=\sum_{i=0}^M \eta_iV_i$ and control cost $C=\sum_{i=0}^M \eta_iC_i$. Hence, $(C,V)\in \text{Conv}(P_{\text{d},\Tilde{x}_0})$.

    We next show that any point in the convex hull $\text{Conv}(P_{\text{d},\Tilde{x}_0})$ of the Performance Set of $\Pi_{\text{d}}$ is associated with a mixed policy. Let $(C,V)\in \text{Conv}(P_{\text{d},\Tilde{x}_0})$. \rev{Recall that $P_{\text{d},\Tilde{x}_0}\in\mathbb{R}^2$ is two-dimensional. Hence, its convex
    hull $\text{Conv}(P_{\text{d},\Tilde{x}_0})$ can be constructed by all convex combinations of two points of $P_{\text{d},\Tilde{x}_0}$. Hence, there must exist two (not necessarily distinct)} points $(C_1,V_1), (C_2,V_2) \in P_{\text{d},\Tilde{x}_0}$ associated to some $\pi_1,\pi_2\in \Pi_{\text{d}}$, such that $C = \eta C_1 + (1-\eta) C_2$ and $V = \eta V_1 + (1-\eta) V_2, \eta\in [0,1]$. Defining a policy $\pi_{\text{mix}}\in\Pi_{\text{mix}}$, consisting of $\pi_1,\pi_2\in \Pi_{\text{d}}$, which are sampled with probabilities $\eta$ and $1-\eta$, respectively, we obtain a mixed policy with expected cost $C$ and safety $V$. 
\end{proof}
\smallskip
Proposition \ref{thm_convexity_of_mixed_performance_sets} immediately yields the following.
\else
    \rev{
    Complete proofs of all results can be found in the extended version \cite{schmid2023computing}. Proposition \ref{thm_convexity_of_mixed_performance_sets} follows from the fact that mixed policies randomly sample deterministic causal policies. Based on the probabilities with which these policies are sampled, the incurred costs and safety probabilities can be interpolated by their convex combination. E.g., for policies $\pi_1,\pi_2\in\Pi_d$ with safety $V_0^{\pi_1}$, $V_0^{\pi_2}$ and costs $C_0^{\pi_1}$, $C_0^{\pi_2}$ a mixed policy can be constructed which samples $\pi_1$ with probability $\beta$ and $\pi_2$ with probability $1-\beta$, $\beta\in[0,1]$. The mixed policy then achieves an expected safety of $\beta V_0^{\pi_1} + (1-\beta) V_0^{\pi_2}$ and cost $\beta C_0^{\pi_1} + (1-\beta) C_0^{\pi_2}$. Similar results have been reported for finite state and action MDPs \cite{altman2021constrained} and stochastic Markov policies on infinite time horizons \cite{feinberg2002nonatomic}.}
\fi
\begin{corollary}
    \label{cor_mixing_two_policies_is_enough}
    Any point in the Performance Set $P_{\Pi_{\text{mix}},\Tilde{x}_0}$ can be attained by mixing at maximum two deterministic causal policies.
\end{corollary}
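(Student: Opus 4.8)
The intended route is to read the statement off Proposition~\ref{thm_convexity_of_mixed_performance_sets}; what is worth spelling out is the dictionary between mixing policies and convex combinations in performance space. Since $C_0^{\pi}(\Tilde{x}_0)$ and $V_0^{\pi}(\Tilde{x}_0)$ are expectations, a mixed policy that samples $\pi_1,\dots,\pi_m\in\Pi_{\text{d}}$ with probabilities $\eta_1,\dots,\eta_m$ has performance $\sum_{i=1}^{m}\eta_i\bigl(C_0^{\pi_i}(\Tilde{x}_0),V_0^{\pi_i}(\Tilde{x}_0)\bigr)$, i.e.\ exactly a convex combination of $m$ points of $P_{\Pi_{\text{d}},\Tilde{x}_0}$; conversely, any such convex combination is realised by the mixed policy sampling $\pi_i$ with probability $\eta_i$. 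By Proposition~\ref{thm_convexity_of_mixed_performance_sets}, $P_{\Pi_{\text{mix}},\Tilde{x}_0}=\text{Conv}(P_{\Pi_{\text{d}},\Tilde{x}_0})$, so the corollary reduces to the geometric claim that every point of $\text{Conv}(P_{\Pi_{\text{d}},\Tilde{x}_0})$ is a convex combination of at most two points of $P_{\Pi_{\text{d}},\Tilde{x}_0}$.

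For this I would invoke a planar Carath\'eodory-type argument. Carath\'eodory's theorem alone only yields three points in $\mathbb{R}^2$; the sharpening to two is the Fenchel--Bunt theorem: if a subset of $\mathbb{R}^{d}$ has at most $d$ connected components, every point of its convex hull is a convex combination of at most $d$ of its points. For $d=2$ it thus suffices that $P_{\Pi_{\text{d}},\Tilde{x}_0}$ be connected. I would establish this by first reducing to the smaller class $\Pi_{\text{dm}}$: every linear functional of $(C,V)$ has the same supremum (and infimum) over $\Pi_{\text{d}}$ as over $\Pi_{\text{dm}}$, each being the value of a finite-horizon MDP attained by a deterministic Markov policy, so $\text{Conv}(P_{\Pi_{\text{dm}},\Tilde{x}_0})=\text{Conv}(P_{\Pi_{\text{d}},\Tilde{x}_0})$ and it is enough to show $P_{\Pi_{\text{dm}},\Tilde{x}_0}$ is connected. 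Given two deterministic Markov policies $\pi_0,\pi_1$, I would interpolate stagewise within $\mathcal{U}$ --- e.g.\ via $(1-t)\mu_k^0(x)+t\mu_k^1(x)$ when $\mathcal{U}$ is convex, or along a path in general, using that $\mathcal{U}$ is a connected compact set --- to obtain measurable deterministic Markov policies $\pi_t$, $t\in[0,1]$, whose performance $\bigl(C_0^{\pi_t}(\Tilde{x}_0),V_0^{\pi_t}(\Tilde{x}_0)\bigr)$ varies continuously in $t$ by the continuity hypotheses of Assumption~\ref{assumption_of_attainability} and compactness of $\mathcal{U}$ (a standard finite-horizon induction). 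Hence any two points of $P_{\Pi_{\text{dm}},\Tilde{x}_0}$ are joined by a continuous path inside it, so it is connected, and Fenchel--Bunt applies with $d=2$.

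The crux, and the step I expect to be the main obstacle, is this connectedness argument; it is the only place where substantive content enters, the rest being bookkeeping around Proposition~\ref{thm_convexity_of_mixed_performance_sets}. It does use connectedness of $\mathcal{U}$, which is marginally stronger than Assumption~\ref{assumption_of_attainability}; when $\mathcal{U}$ is disconnected (e.g.\ finite) the performance set may be disconnected and the plain Carath\'eodory bound of three replaces two, while the sharp bound of two still holds along the Pareto front, whose points are boundary points of the planar convex set $P_{\Pi_{\text{mix}},\Tilde{x}_0}$ and hence convex combinations of at most two of its extreme points --- which are themselves attained by single deterministic policies. I would therefore present the connected case as the main line and note these caveats, together with the analogy to the finite-state, finite-action setting of \cite{altman2021constrained}, where a constrained-optimal policy is a mixture of at most $q+1$ deterministic policies, with $q$ the number of constraints.
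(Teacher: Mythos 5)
Your route is genuinely different from the paper's, and in one important respect more careful. The paper disposes of this corollary in one line: it asserts that the convex hull of a planar set is the union of all convex combinations of \emph{two} of its points, and reads the result off Proposition~\ref{thm_convexity_of_mixed_performance_sets}. As a statement about arbitrary subsets of $\mathbb{R}^2$ that assertion is false (for $S=\{(0,0),(1,0),(0,1)\}$ the union of pairwise segments is the triangle's boundary, which misses the centroid); plain Carath\'eodory only gives three points in the plane. You correctly identify that the sharp two-point bound requires an extra hypothesis, and you supply the right tool, Fenchel--Bunt, together with a connectedness argument for the performance set. That is exactly the missing ingredient, and your observation that the two-point bound survives unconditionally for \emph{boundary} points of $P_{\Pi_{\text{mix}},\Tilde{x}_0}$ is the cleanest repair for how the corollary is actually used downstream: Lemma~\ref{thm_AttainabilityofMixedPolicies} only needs the infimum of a linear functional (one policy suffices there), and the mixture \eqref{eq_stochastic_policies_ratios} targets a Pareto point, which lies on the boundary of the hull.

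Two caveats on your own argument. First, connectedness of $P_{\Pi_{\text{d}},\Tilde{x}_0}$ via stagewise interpolation needs $\mathcal{U}$ connected, which is strictly stronger than Assumption~\ref{assumption_of_attainability} and fails precisely in the finite state--action setting to which the paper claims its results ``trivially extend'' (and in which \cite{altman2021constrained} works); there the corollary as literally stated can fail for interior points of the hull, so your fallback to three policies, or to boundary points only, is not a cosmetic caveat but the honest statement of what is provable. Second, your reduction from $\Pi_{\text{d}}$ to $\Pi_{\text{dm}}$ via equality of suprema of linear functionals only identifies the \emph{closed} convex hulls of the two performance sets, whereas Fenchel--Bunt applied to the connected set $P_{\Pi_{\text{dm}},\Tilde{x}_0}$ covers $\text{Conv}(P_{\Pi_{\text{dm}},\Tilde{x}_0})$; to conclude for every point of $P_{\Pi_{\text{mix}},\Tilde{x}_0}=\text{Conv}(P_{\Pi_{\text{d}},\Tilde{x}_0})$ you either need the non-closed hulls to coincide or you should run the connectedness and Fenchel--Bunt argument on $P_{\Pi_{\text{d}},\Tilde{x}_0}$ itself. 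Neither issue undermines the overall structure, but both deserve a sentence if this proof is written out in full.
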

\rev{Corollary \ref{cor_mixing_two_policies_is_enough} has also been observed in \cite{Ono_4} and \cite{feinberg_1} and is a consequence of Proposition \ref{thm_convexity_of_mixed_performance_sets} and $P_{\Pi_{\text{mix}},\Tilde{x}_0}\subseteq\mathbb{R}^2$\cite{schmid2023computing}; the convex hull of a two-dimensional set can be constructed by the union of all convex combinations of two points in the set.}
\begin{lemma}[Attainability of Mixed Policies]
\label{thm_AttainabilityofMixedPolicies}
    Fix $\lambda\in\mathbb{R}_{\geq0}$. Then, there exists at least one measurable $\lambda$-optimal mixed policy $\pi_{\lambda}\in{\argmin}_{\pi\in\Pi_{\text{mix}}}  C_0^{\pi}(\Tilde{x}_0) - \lambda V_0^{\pi}(\Tilde{x}_0)$ that equals a $\lambda$-optimal deterministic Markov policy. Furthermore, any mixture of $\lambda$-optimal deterministic Markov policies is a $\lambda$-optimal mixed policy. 
\end{lemma}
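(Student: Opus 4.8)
The plan is to reduce the mixed-policy problem to the deterministic Markov problem already settled in Theorem~\ref{th_recursion}, by exploiting that the objective $C_0^{\pi}(\tilde x_0)-\lambda V_0^{\pi}(\tilde x_0)$ is an \emph{affine} functional of the point $\big(C_0^{\pi}(\tilde x_0),V_0^{\pi}(\tilde x_0)\big)\in P_{\Pi_{\text{d}},\tilde x_0}$. First I would invoke Theorem~\ref{th_recursion} to obtain a measurable deterministic Markov policy $\pi^{\star}$ attaining the value $J_0^{\star}(\tilde x_0)=\inf_{\pi\in\Pi_{\text{d}}}C_0^{\pi}(\tilde x_0)-\lambda V_0^{\pi}(\tilde x_0)$; in particular, since this infimum over $\Pi_{\text{d}}$ is attained within $\Pi_{\text{dm}}\subseteq\Pi_{\text{d}}$, we also have $\inf_{\pi\in\Pi_{\text{dm}}}=\inf_{\pi\in\Pi_{\text{d}}}=J_0^{\star}(\tilde x_0)$, so $\pi^{\star}$ is a $\lambda$-optimal deterministic Markov policy in the sense of the Definition.

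Second, I would transfer optimality to mixed policies. Because $(C,V)\mapsto C-\lambda V$ is linear, its infimum over any nonempty set $S\subseteq\mathbb{R}^2$ coincides with its infimum over $\mathrm{Conv}(S)$: the inequality ``$\le$'' is immediate from $S\subseteq\mathrm{Conv}(S)$, and ``$\ge$'' holds since any element of $\mathrm{Conv}(S)$ is a finite convex combination of elements of $S$, and a convex combination of reals is no smaller than their infimum (both sides being $-\infty$ in the degenerate case). Applying this with $S=P_{\Pi_{\text{d}},\tilde x_0}$ and using Proposition~\ref{thm_convexity_of_mixed_performance_sets}, which gives $P_{\Pi_{\text{mix}},\tilde x_0}=\mathrm{Conv}(P_{\Pi_{\text{d}},\tilde x_0})$, yields
\[
\inf_{\pi\in\Pi_{\text{mix}}}C_0^{\pi}(\tilde x_0)-\lambda V_0^{\pi}(\tilde x_0)=\inf_{\pi\in\Pi_{\text{d}}}C_0^{\pi}(\tilde x_0)-\lambda V_0^{\pi}(\tilde x_0)=J_0^{\star}(\tilde x_0).
\]
Since a deterministic policy is the degenerate mixed policy that samples itself with probability one, $\pi^{\star}$ is a measurable mixed policy achieving $J_0^{\star}(\tilde x_0)$; hence it is a $\lambda$-optimal mixed policy coinciding with a $\lambda$-optimal deterministic Markov policy, which proves the first assertion.

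Third, for the second assertion I would take an arbitrary mixture $\pi_{\mathrm{mix}}$ of $\lambda$-optimal deterministic Markov policies. Each constituent $\pi_i$ satisfies $C_0^{\pi_i}(\tilde x_0)-\lambda V_0^{\pi_i}(\tilde x_0)=J_0^{\star}(\tilde x_0)$ by the first step. As recorded in the proof of Proposition~\ref{thm_convexity_of_mixed_performance_sets}, the cost and safety of a mixture are the corresponding convex combinations (more generally, expectations under the mixing law) of the costs and safeties of its constituents, so $C_0^{\pi_{\mathrm{mix}}}(\tilde x_0)-\lambda V_0^{\pi_{\mathrm{mix}}}(\tilde x_0)=\sum_i\eta_i\big(C_0^{\pi_i}(\tilde x_0)-\lambda V_0^{\pi_i}(\tilde x_0)\big)=J_0^{\star}(\tilde x_0)$, which by the displayed identity is the mixed-policy optimal value; thus $\pi_{\mathrm{mix}}$ is $\lambda$-optimal among $\Pi_{\text{mix}}$. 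Measurability is inherited because a mixture of measurable deterministic policies is, by construction, a measurable random variable into $(\Pi_{\text{d}},\mathcal G_{\Pi_{\text{d}}})$.

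I expect the only delicate points, rather than genuine obstacles, to be bookkeeping: making precise that a deterministic Markov policy is admissible as a (degenerate) mixed policy, so that \emph{attainability}, not merely the infimum value, transfers from $\Pi_{\text{d}}$ to $\Pi_{\text{mix}}$; and checking that performance values are finite so the affine-functional argument applies, which holds since $\ell_{0:N}\ge 0$ gives $C_0^{\pi}(\tilde x_0)-\lambda V_0^{\pi}(\tilde x_0)\ge-\lambda$ and Theorem~\ref{th_recursion} supplies finiteness of $J_0^{\star}(\tilde x_0)$. No compactness or measurable-selection argument beyond what Theorem~\ref{th_recursion} already provides is needed.
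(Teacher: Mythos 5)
Your proposal is correct and follows essentially the same route as the paper: both reduce the mixed-policy infimum to the deterministic one by exploiting linearity of $C-\lambda V$ on the (convexified) performance set from Proposition~\ref{thm_convexity_of_mixed_performance_sets}, and then invoke Theorem~\ref{th_recursion} for attainment by a measurable deterministic Markov policy. The only cosmetic difference is that you phrase the reduction abstractly (a linear functional has the same infimum over a set and its convex hull), whereas the paper writes the same computation concretely for two-policy mixtures via Corollary~\ref{cor_mixing_two_policies_is_enough}.
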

\iffullversion
\begin{proof}
        By Corollary \ref{cor_mixing_two_policies_is_enough}, any point in $P_{\Pi_{\text{mix}},\Tilde{x}_0}$ can be constructed using two deterministic causal policies $\pi_1,\pi_2\in\Pi_{\text{d}}$ that are played with some probability $\eta\in[0,1]$ and $\eta-1$, respectively. Then,
\begin{align*}
    {\inf}_{\pi\in\Pi_{\text{mix}}}&  C_0^{\pi}(\Tilde{x}_0) - \lambda V_0^{\pi}(\Tilde{x}_0) \\= \ &{\inf}_{\pi_1,\pi_2\in\Pi_{\text{d}}}  \eta(C_0^{\pi_1}(\Tilde{x}_0) - \lambda V_0^{\pi_1}(\Tilde{x}_0)) \\&  + (1-\eta)(C_0^{\pi_2}(\Tilde{x}_0) - \lambda V_0^{\pi_2}(\Tilde{x}_0))
    \\= \ & {\inf}_{\pi_1\in\Pi_{\text{d}}}  \eta(C_0^{\pi_1}(\Tilde{x}_0) - \lambda V_0^{\pi_1}(\Tilde{x}_0)) \\&  + {\inf}_{\pi_2\in\Pi_{\text{d}}}(1-\eta)(C_0^{\pi_2}(\Tilde{x}_0) - \lambda V_0^{\pi_2}(\Tilde{x}_0))
    \\= \ & {\inf}_{\pi_1\in\Pi_{\text{dm}}}  \eta(C_0^{\pi_1}(\Tilde{x}_0) - \lambda V_0^{\pi_1}(\Tilde{x}_0)) \\&  + {\inf}_{\pi_2\in\Pi_{\text{dm}}}(1-\eta)(C_0^{\pi_2}(\Tilde{x}_0) - \lambda V_0^{\pi_2}(\Tilde{x}_0)),
    \\= \ & {\min}_{\pi_1\in\Pi_{\text{dm}}}  \eta(C_0^{\pi_1}(\Tilde{x}_0) - \lambda V_0^{\pi_1}(\Tilde{x}_0)) \\&  + {\min}_{\pi_2\in\Pi_{\text{dm}}}(1-\eta)(C_0^{\pi_2}(\Tilde{x}_0) - \lambda V_0^{\pi_2}(\Tilde{x}_0)),
\end{align*}
where the last two equalities follow from the problem being an MDP, allowing us to restrict attention to deterministic Markov policies, and Theorem \ref{thm_our_Dp-recursion}. If the minimizing deterministic Markov policy is unique, i.e., $\Pi_{\text{dm},\lambda}$ is a singleton, then the mixed policy is unique and equals the deterministic Markov policy. If it is non-unique, also any mixture of $\lambda$-optimal deterministic Markov policies yields a $\lambda$-optimal mixed policy. Since the minimizing $\lambda$-optimal deterministic Markov policies are measurable (see Theorem \ref{th_recursion}), the mixed policy is measurable. 
\end{proof}
\smallskip
\fi 
\begin{corollary}
      Any $\lambda$-optimal mixed policy has zero probability of sampling a deterministic Markov policy that is not $\lambda$-optimal. 
      \label{cor_mixedPolicyOnlySamplesLambdaOptimalDeterministic}
\end{corollary}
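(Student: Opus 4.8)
The plan is to derive Corollary \ref{cor_mixedPolicyOnlySamplesLambdaOptimalDeterministic} as a direct consequence of Lemma \ref{thm_AttainabilityofMixedPolicies} together with the defining property of $\lambda$-optimality. First I would recall that a mixed policy $\pi_{\text{mix}}$ is a random variable taking values in $(\Pi_{\text{d}},\mathcal{G}_{\Pi_{\text{d}}})$, and that by Corollary \ref{cor_mixing_two_policies_is_enough} and the chain of equalities in the proof of Lemma \ref{thm_AttainabilityofMixedPolicies}, the value of the objective $C_0^{\pi}(\Tilde{x}_0)-\lambda V_0^{\pi}(\Tilde{x}_0)$ at a mixed policy equals the corresponding expectation $\mathbb{E}_{\pi\sim\pi_{\text{mix}}}[C_0^{\pi}(\Tilde{x}_0)-\lambda V_0^{\pi}(\Tilde{x}_0)]$ over the sampled deterministic policies. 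Denote by $J^{\star}_{\lambda} = \min_{\pi\in\Pi_{\text{dm}}} C_0^{\pi}(\Tilde{x}_0)-\lambda V_0^{\pi}(\Tilde{x}_0)$ the optimal value, which by Theorem \ref{thm_our_Dp-recursion} is attained; since any deterministic causal policy can be matched in performance by a deterministic Markov policy, $C_0^{\pi}(\Tilde{x}_0)-\lambda V_0^{\pi}(\Tilde{x}_0)\geq J^{\star}_{\lambda}$ for every $\pi\in\Pi_{\text{d}}$.

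The key step is then the following elementary argument. Suppose, for contradiction, that a $\lambda$-optimal mixed policy $\pi_{\text{mix}}$ assigns strictly positive probability to the set $\mathcal{N} = \{\pi\in\Pi_{\text{d}} : C_0^{\pi}(\Tilde{x}_0)-\lambda V_0^{\pi}(\Tilde{x}_0) > J^{\star}_{\lambda}\}$ of non-$\lambda$-optimal deterministic policies. Since the integrand $C_0^{\pi}(\Tilde{x}_0)-\lambda V_0^{\pi}(\Tilde{x}_0)$ is everywhere at least $J^{\star}_{\lambda}$ and strictly exceeds it on a set of positive measure, its expectation under $\pi_{\text{mix}}$ is strictly greater than $J^{\star}_{\lambda}$, i.e.,
\begin{align*}
    \mathbb{E}_{\pi\sim\pi_{\text{mix}}}\bigl[C_0^{\pi}(\Tilde{x}_0)-\lambda V_0^{\pi}(\Tilde{x}_0)\bigr] > J^{\star}_{\lambda}.
\end{align*}
But the left-hand side equals $C_0^{\pi_{\text{mix}}}(\Tilde{x}_0)-\lambda V_0^{\pi_{\text{mix}}}(\Tilde{x}_0)$, and by Lemma \ref{thm_AttainabilityofMixedPolicies} there exists a $\lambda$-optimal mixed policy whose value equals $J^{\star}_{\lambda}$, so $\min_{\pi\in\Pi_{\text{mix}}} C_0^{\pi}(\Tilde{x}_0)-\lambda V_0^{\pi}(\Tilde{x}_0) = J^{\star}_{\lambda}$. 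This contradicts the $\lambda$-optimality of $\pi_{\text{mix}}$, and hence $\pi_{\text{mix}}(\mathcal{N})=0$.

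The only genuine subtlety — the "hard part" — is the measurability and well-definedness of the objects involved: one must check that $\pi\mapsto C_0^{\pi}(\Tilde{x}_0)-\lambda V_0^{\pi}(\Tilde{x}_0)$ is $\mathcal{G}_{\Pi_{\text{d}}}$-measurable so that $\mathcal{N}$ is a measurable set and the expectation above is meaningful, and that the value of a mixed policy genuinely decomposes as the claimed integral (this is exactly the content used implicitly in the proofs of Proposition \ref{thm_convexity_of_mixed_performance_sets} and Lemma \ref{thm_AttainabilityofMixedPolicies}, and follows from the construction of $\mathbb{P}_{x_0}^{\pi_{\text{mix}}}$ as a mixture; see \cite{fainberg1982non}). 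Everything else is a one-line contradiction argument. I would therefore keep the proof short, citing Lemma \ref{thm_AttainabilityofMixedPolicies} for the value decomposition and attainability, and present the positive-measure contradiction as above.
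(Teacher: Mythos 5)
Your proposal is correct and follows essentially the same route as the paper: both argue by contradiction that putting positive mass on deterministic policies whose objective strictly exceeds the optimal value $J^{\star}_{\lambda}$ forces the mixed policy's objective (which is the expectation of the sampled policies' objectives) strictly above $J^{\star}_{\lambda}$, contradicting $\lambda$-optimality. The only difference is that the paper writes the argument for a two-point mixture $\eta\pi_1+(1-\eta)\pi_2$ while you phrase it for a general mixing measure, which is slightly more general and also makes explicit the measurability caveat that the paper leaves implicit.
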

\iffullversion
    \begin{proof}
        Let $\pi_1\in\Pi_{\text{dm},\lambda}$ and $\pi_2\in\Pi_{\text{dm}}\setminus\Pi_{\text{dm},\lambda}$, i.e., $C_0^{\pi_1}(\Tilde{x}_0) - \lambda^{\star} V_0^{\pi_1}(\Tilde{x}_0)<C_0^{\pi_2}(\Tilde{x}_0) - \lambda^{\star} V_0^{\pi_2}(\Tilde{x}_0)$. Then,
        \begin{align*}
            {\min}_{\pi\in\Pi_{\text{mix}}}  &C_0^{\pi}(\Tilde{x}_0) - \lambda^{\star} V_0^{\pi}(\Tilde{x}_0) 
            \\= \ &C_0^{\pi_1}(\Tilde{x}_0) - \lambda^{\star} V_0^{\pi_1}(\Tilde{x}_0)
            \\< \ &\eta(C_0^{\pi_1}(\Tilde{x}_0) - \lambda^{\star} V_0^{\pi_1}(\Tilde{x}_0)) 
            \\ &+(1-\eta)(C_0^{\pi_2}(\Tilde{x}_0) - \lambda^{\star} V_0^{\pi_2}(\Tilde{x}_0))
        \end{align*}
        whenever $\eta < 1$, i.e., if there is a non-zero probability of sampling $\pi_2$. Hence, whenever there is a non-zero probability of sampling a deterministic Markov policy that is not $\lambda$-optimal, the resulting mixed policy can also not be $\lambda$-optimal since sampling any $\lambda$-optimal deterministic Markov policy with probability one yields a superior mixed policy.
    \end{proof}
\else
    \rev{Lemma \ref{thm_AttainabilityofMixedPolicies} follows from mixed policies being constructed from deterministic causal policies and Theorem \ref{thm_our_Dp-recursion} \cite{schmid2023computing}. Corollary \ref{cor_mixedPolicyOnlySamplesLambdaOptimalDeterministic} is an immediate consequence of Lemma \ref{thm_AttainabilityofMixedPolicies}. Intuitively, playing a suboptimal policy with a non-zero probability yields the overall mixed policy suboptimal. 
    }
\fi




\subsubsection{Equivalence of Problems}

\begin{lemma} (Monotonicity)
    The safety $V_0^{\pi_{\lambda}}(\Tilde{x}_0)$ and cost $C_0^{\pi_{\lambda}}(\Tilde{x}_0)$ of the policy $\pi_{\lambda}\in\Pi_{\text{mix},\lambda}$ are monotone in $\lambda\in\mathbb{R}_{\geq 0}$. Specifically, if $0\leq \lambda < \lambda',$ then $V_0^{\pi_\lambda}(\Tilde{x}_0)\leq V_0^{\pi_{\lambda'}}(\Tilde{x}_0)$ and $C_0^{\pi_\lambda}(\Tilde{x}_0)\leq C_0^{\pi_{\lambda'}}(\Tilde{x}_0)$ for all $\pi_{\lambda}\in\Pi_{\text{mix},\lambda}, \pi_{\lambda'}\in\Pi_{\text{mix},\lambda'}$. 
    \label{lem_monotonicity}
\end{lemma}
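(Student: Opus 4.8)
The plan is to use the standard four–point (parametric optimality) argument. Fix $0\le\lambda<\lambda'$ and pick any $\pi_\lambda\in\Pi_{\text{mix},\lambda}$ and $\pi_{\lambda'}\in\Pi_{\text{mix},\lambda'}$; these exist by Lemma~\ref{thm_AttainabilityofMixedPolicies}, and since $\overline{C}(\Tilde{x}_0)$ is bounded and $V_0^{\pi}(\Tilde{x}_0)\in[0,1]$ for every policy, all four numbers $C_0^{\pi_\lambda}(\Tilde{x}_0)$, $V_0^{\pi_\lambda}(\Tilde{x}_0)$, $C_0^{\pi_{\lambda'}}(\Tilde{x}_0)$, $V_0^{\pi_{\lambda'}}(\Tilde{x}_0)$ are finite. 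The definition of $\Pi_{\text{mix},\lambda}$ (the argmin in \eqref{bilevel}, where the constant $\lambda\alpha$ is irrelevant) gives
\begin{align*}
C_0^{\pi_\lambda}(\Tilde{x}_0)-\lambda V_0^{\pi_\lambda}(\Tilde{x}_0)&\le C_0^{\pi_{\lambda'}}(\Tilde{x}_0)-\lambda V_0^{\pi_{\lambda'}}(\Tilde{x}_0),\\
C_0^{\pi_{\lambda'}}(\Tilde{x}_0)-\lambda' V_0^{\pi_{\lambda'}}(\Tilde{x}_0)&\le C_0^{\pi_{\lambda}}(\Tilde{x}_0)-\lambda' V_0^{\pi_{\lambda}}(\Tilde{x}_0).
\end{align*}

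Adding these two inequalities and cancelling the cost terms leaves $(\lambda'-\lambda)\,V_0^{\pi_\lambda}(\Tilde{x}_0)\le(\lambda'-\lambda)\,V_0^{\pi_{\lambda'}}(\Tilde{x}_0)$; dividing by $\lambda'-\lambda>0$ yields $V_0^{\pi_\lambda}(\Tilde{x}_0)\le V_0^{\pi_{\lambda'}}(\Tilde{x}_0)$, the safety monotonicity. For the cost, substitute this back into the first optimality inequality: $C_0^{\pi_\lambda}(\Tilde{x}_0)\le C_0^{\pi_{\lambda'}}(\Tilde{x}_0)-\lambda\big(V_0^{\pi_{\lambda'}}(\Tilde{x}_0)-V_0^{\pi_\lambda}(\Tilde{x}_0)\big)\le C_0^{\pi_{\lambda'}}(\Tilde{x}_0)$, where the last step uses $\lambda\ge 0$ together with the just-proved safety inequality.

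I do not foresee a genuine obstacle here; the only things to check are formalities — existence of $\lambda$-optimal mixed policies and finiteness of their cost (covered by Lemma~\ref{thm_AttainabilityofMixedPolicies} and boundedness of $\overline{C}(\Tilde{x}_0)$), and that the conclusion holds for \emph{every} choice of representatives of $\Pi_{\text{mix},\lambda}$ and $\Pi_{\text{mix},\lambda'}$, which is immediate since only the defining optimality inequalities above are used. As an alternative one could run the identical four-point argument on the deterministic Markov minimizers supplied by Theorem~\ref{thm_our_Dp-recursion} and then lift to mixed policies via Corollary~\ref{cor_mixedPolicyOnlySamplesLambdaOptimalDeterministic}, but the mixed-policy version is cleaner.
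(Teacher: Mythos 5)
Your proof is correct, and it rests on exactly the same ingredients as the paper's: the two optimality inequalities defining $\Pi_{\text{mix},\lambda}$ and $\Pi_{\text{mix},\lambda'}$ combined with $\lambda'-\lambda>0$. The paper organizes this as a proof by contradiction (assuming $V_0^{\pi_\lambda}(\Tilde{x}_0)>V_0^{\pi_{\lambda'}}(\Tilde{x}_0)$ and chaining the same inequalities to a strict absurdity, then repeating for the cost), whereas your direct four-point summation plus back-substitution is an equivalent and somewhat cleaner rendering of the identical argument.
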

\iffullversion
\smallskip
\begin{proof}
\rev{Let $\lambda\geq 0$ be fixed throughout the proof and consider any $\pi_{\lambda}\in\Pi_{\text{mix},\lambda}$. 
By Definition 3.2, any $\lambda$-optimal mixed policy satisfies 
\begin{equation*}
    \Pi_{\text{mix},\lambda}=\{\pi\in\Pi_{\text{mix}}| \pi\in{\arg\min}_{\pi\in\Pi_{\text{mix}}}  C_0^{\pi}(\Tilde{x}_0) - \lambda V_0^{\pi}(\Tilde{x}_0) \},
\end{equation*} 
which leads to
\begin{align}
	C_0^{\pi_{\lambda}}(\Tilde{x}_0) - \lambda V_0^{\pi_{\lambda}}(\Tilde{x}_0)=\min_{\pi\in\Pi_{\text{mix}}}  C_0^{\pi}(\Tilde{x}_0) - \lambda V_0^{\pi}(\Tilde{x}_0).
 \label{eq_lambda_optimal_policy}
\end{align}
Hence, any policy with greater safety will be associated with greater control cost, and vice versa. This means that, for all $\pi\in \Pi_{\text{mix}}$,
\begin{align*}
    V_0^{\pi}(\Tilde{x}_0)> V_0^{\pi_{\lambda}}(\Tilde{x}_0) &\implies  C_0^{\pi}(\Tilde{x}_0)> C_0^{\pi_{\lambda}}(\Tilde{x}_0),\\
    C_0^{\pi}(\Tilde{x}_0)< C_0^{\pi_{\lambda}}(\Tilde{x}_0) &\implies  V_0^{\pi}(\Tilde{x}_0)< V_0^{\pi_{\lambda}}(\Tilde{x}_0).
\end{align*}
}
Otherwise, if there exists a policy $\hat{\pi}\in\Pi_{\text{mix}}$ that has greater or equal safety at a lower cost than $\pi_{\lambda}$, or lower or equal cost at a greater safety than $\pi_{\lambda}$, then  
\begin{equation}
    C_0^{\hat{\pi}}(\Tilde{x}_0) - \lambda V_0^{\hat{\pi}}(\Tilde{x}_0) < C_0^{\pi_{\lambda}}(\Tilde{x}_0) - \lambda V_0^{\pi_{\lambda}}(\Tilde{x}_0),
\end{equation}
which would be a contradiction to the definition of $\pi_{\lambda}$ being a minimizer in~\eqref{eq_lambda_optimal_policy}.

\rev{Now, consider any fixed $\lambda'\geq 0$, such that $0\leq\lambda < \lambda', \pi_{\lambda'}\in\Pi_{\text{mix},\lambda'}$. We aim to show that, if $\lambda < \lambda'$, then any $\pi_{\lambda'}\in\Pi_{\text{mix},\lambda'}$ will be at least as safe as $\pi_{\lambda}$ and have a cost no lower than $\pi_{\lambda}$, i.e., $V_0^{\pi_{\lambda'}}(\tilde x_0)\geq V_0^{\pi_{\lambda}}(\tilde x_0)$ and $C^{\pi_{\lambda'}}(\tilde x_0)\geq C^{\pi_{\lambda}}(\tilde x_0)$. We prove this by contradiction: The opposite of our claim is that either $V_0^{\pi_{\lambda'}}(\tilde x_0)< V_0^{\pi_{\lambda}}(\tilde x_0)$ or $C^{\pi_{\lambda'}}(\tilde x_0)< C^{\pi_{\lambda}}(\tilde x_0)$. To start, we assume that $V_0^{\pi_{\lambda'}}(\tilde x_0)< V_0^{\pi_{\lambda}}(\tilde x_0)$ and show that it leads to a contradiction. The steps can be followed analogously to show that $C^{\pi_{\lambda'}}(\tilde x_0)< C^{\pi_{\lambda}}(\tilde x_0)$ leads to a similar contradiction. Hence, it must hold that $V_0^{\pi_{\lambda'}}(\tilde x_0)\geq V_0^{\pi_{\lambda}}(\tilde x_0)$ and $C^{\pi_{\lambda'}}(\tilde x_0)\geq C^{\pi_{\lambda}}(\tilde x_0)$. 
}

Assume, for the sake of contradiction, that $V_0^{\pi_{\lambda}}(\Tilde{x}_0)> V_0^{\pi_{\lambda'}}(\Tilde{x}_0)$. Then, as above, $C_0^{\pi_{\lambda}}(\Tilde{x}_0) > C_0^{\pi_{\lambda'}}(\Tilde{x}_0)$.
\rev{
Otherwise, if $V_0^{\pi_{\lambda}}(\Tilde{x}_0)> V_0^{\pi_{\lambda'}}(\Tilde{x}_0)$ and $C_0^{\pi_{\lambda}}(\Tilde{x}_0) \leq C_0^{\pi_{\lambda'}}(\Tilde{x}_0)$, then
\begin{align*}
    C_0^{\pi_{\lambda}}(\Tilde{x}_0) - \lambda' V_0^{\pi_{\lambda}}(\Tilde{x}_0) < C_0^{\pi_{\lambda'}}(\Tilde{x}_0) - \lambda' V_0^{\pi_{\lambda'}}(\Tilde{x}_0),
\end{align*}
which would be contradicting the fact that
\begin{align*}
    \pi_{\lambda'}\in{\arg\min}_{\pi\in\Pi_{\text{mix}}}  C_0^{\pi}(\Tilde{x}_0) - \lambda' V_0^{\pi}(\Tilde{x}_0).
\end{align*}
}

Further, $V_0^{\pi_{\lambda}}(\Tilde{x}_0)>0$ since $V_0^{\pi_{\lambda}}(\Tilde{x}_0)> V_0^{\pi_{\lambda'}}(\Tilde{x}_0)\geq 0$. We then have that
\begin{align*}
    C_0^{\pi_{\lambda'}}(\Tilde{x}_0) &- \lambda' V_0^{\pi_{\lambda'}}(\Tilde{x}_0) \\&\leq C_0^{\pi_{\lambda}}(\Tilde{x}_0) - \lambda' V_0^{\pi_{\lambda}}(\Tilde{x}_0) \\
    &=C_0^{\pi_{\lambda}}(\Tilde{x}_0) - \lambda V_0^{\pi_{\lambda}}(\Tilde{x}_0) - (\lambda'-\lambda)V_0^{\pi_{\lambda}}(\Tilde{x}_0) \\
    &\leq C_0^{\pi_{\lambda'}}(\Tilde{x}_0) - \lambda V_0^{\pi_{\lambda'}}(\Tilde{x}_0) - (\lambda'-\lambda)V_0^{\pi_{\lambda}}(\Tilde{x}_0) \\
    &< C_0^{\pi_{\lambda'}}(\Tilde{x}_0) - \lambda V_0^{\pi_{\lambda'}}(\Tilde{x}_0) - (\lambda'-\lambda)V_0^{\pi_{\lambda'}}(\Tilde{x}_0) \\
    &= C_0^{\pi_{\lambda'}}(\Tilde{x}_0) - \lambda' V_0^{\pi_{\lambda'}}(\Tilde{x}_0),
\end{align*}
\rev{
The first inequality again holds by \begin{align*}
    \pi_{\lambda'}\in{\arg\min}_{\pi\in\Pi_{\text{mix}}}  C_0^{\pi}(\Tilde{x}_0) - \lambda' V_0^{\pi}(\Tilde{x}_0),
\end{align*}
the first equality holds since a zero-term was added, the second inequality holds by 
\begin{align*}
    \pi_{\lambda}\in{\arg\min}_{\pi\in\Pi_{\text{mix}}}  C_0^{\pi}(\Tilde{x}_0) - \lambda V_0^{\pi}(\Tilde{x}_0),
\end{align*}
the third inequality holds by $\lambda'>\lambda\geq 0$, i.e., $-(\lambda'-\lambda)<0$, and by the assumption that $V_0^{\pi_{\lambda}}(\Tilde{x}_0)> V_0^{\pi_{\lambda'}}(\Tilde{x}_0)$, which we posed for contradiction. However, the statement is a contradiction since it claims  
\begin{equation}
     C_0^{\pi_{\lambda'}}(\Tilde{x}_0) - \lambda' V_0^{\pi_{\lambda'}}(\Tilde{x}_0) < C_0^{\pi_{\lambda'}}(\Tilde{x}_0) - \lambda' V_0^{\pi_{\lambda'}}(\Tilde{x}_0).
\end{equation}
Hence, the initial hypothesis must be wrong and $V_0^{\pi_{\lambda}}(\Tilde{x}_0)> V_0^{\pi_{\lambda'}}(\Tilde{x}_0)$ can not hold. Therefore, it must hold that $V_0^{\pi_{\lambda}}(\Tilde{x}_0)\leq V_0^{\pi_{\lambda'}}(\Tilde{x}_0)$.

The same line of arguments would apply if we were to assume $C_0^{\pi_{\lambda}}(\Tilde{x}_0) > C_0^{\pi_{\lambda'}}(\Tilde{x}_0)$ as it leads to a similar contradiction. The proof steps can be followed analogously.

Hence, overall $V_0^{\pi_\lambda}(\Tilde{x}_0)\leq V_0^{\pi_{\lambda'}}(\Tilde{x}_0)$ and $C_0^{\pi_\lambda}(\Tilde{x}_0)\leq C_0^{\pi_{\lambda'}}(\Tilde{x}_0)$. Since this holds for any $\pi_{\lambda}\in\Pi_{\text{mix},\lambda},\pi_{\lambda'}\in\Pi_{\text{mix},\lambda'}$, monotonicity follows.
}
\end{proof}
\smallskip
\else
    \rev{Intuitively, the larger $\lambda$ the more preferable a higher safety to minimize $C_0^{\pi_{\lambda}}(\Tilde{x}_0) - \lambda V_0^{\pi_{\lambda}}(\Tilde{x}_0)$. 
    }
\fi
\begin{remark}
\label{remark_mon_lemma}
    Lemma \ref{lem_monotonicity} also holds for $\Pi_{\text{dm},\lambda}$.  
\end{remark}





\ifsingleCol
    \begin{figure}
        \centering
        \includegraphics[height=3cm]{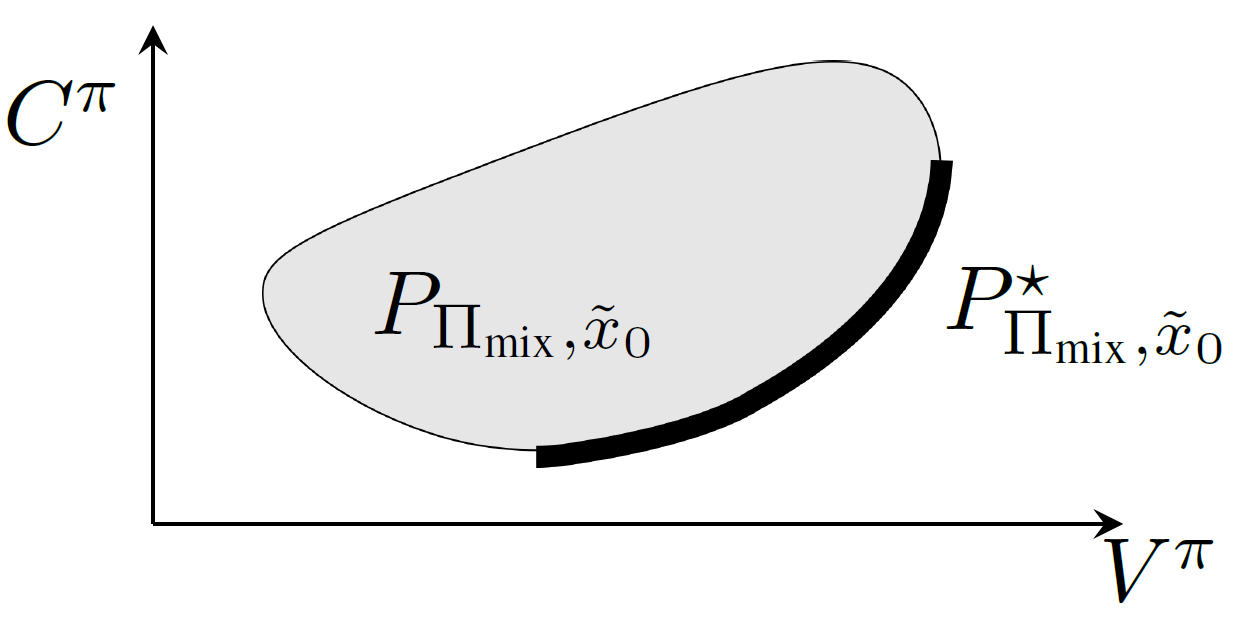}
        \caption{Illustration of a Performance Set $P_{\Pi_{\text{mix}},\Tilde{x}_0}$ and its Pareto front $P_{\Pi_{\text{mix}},\Tilde{x}_0}^{\star}$.}
        \label{fig_pareto_front_definition}
    \end{figure}
\else
    \ifNonArxivVersion
    \begin{figure}
        \centering
        \begin{tikzpicture}
        \begin{axis}[width=0.6\columnwidth,height=4cm,%
                    xlabel={$V^{\pi}$}, ylabel=$C^{\pi}$, ylabel shift = -.1cm, xlabel shift = -.1cm, ticks=none, xtick style={draw=none}, ytick style={draw=none},axis x line = bottom,axis y line = left, axis lines=middle, xlabel style={at={(0,0)}, right,yshift=-5pt,xshift=100pt}, ylabel style={at={(0,0)}, right,yshift=45pt,xshift=-20pt},ymin=0,xmin=0,ymax=1.5,xmax=1.5]
            \end{axis}
            \node [] (label) at (1.75,1) {\includegraphics[width=.3\columnwidth]{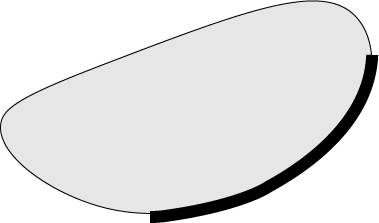} };
            \draw (3.6,0.8) node {$P_{\Pi_{\text{mix}},\Tilde{x}_0}^{\star}$};
            \draw (1.4,0.8) node {$P_{\Pi_{\text{mix}},\Tilde{x}_0}$};
        \end{tikzpicture}
        \caption{A Performance Set $P_{\Pi_{\text{mix}},\Tilde{x}_0}$ and Pareto front $P_{\Pi_{\text{mix}},\Tilde{x}_0}^{\star}$.}
        \label{fig_pareto_front_definition}
    \end{figure}
    \else
        \begin{figure}
            \centering
            \includegraphics[height=2.5cm]{graphics_single_col/fig3.png}
            \caption{Illustration of a Performance Set $P_{\Pi_{\text{mix}},\Tilde{x}_0}$ and its Pareto front $P_{\Pi_{\text{mix}},\Tilde{x}_0}^{\star}$.}
            \label{fig_pareto_front_definition}
        \end{figure}
    \fi
\fi

\begin{theorem}[Equivalence of Problems \eqref{eq_problemFormulation_CCOC} and \eqref{bilevel}]
    Assume that the minimum in \eqref{bilevel} is attained by $\lambda^{\star}\in[0,\infty)$. Then, there exists $\pi_{\lambda^{\star}}\in\Pi_{\text{mix},\lambda^{\star}}$ such that $V_0^{\pi_{\lambda^{\star}}}(\Tilde{x}_0)\geq \alpha$ and $\pi_{\lambda^{\star}}$ is a minimizing argument of Problem \eqref{eq_problemFormulation_CCOC}.
    \label{thm_Optimality_of_new_ccoc_formulation}
\end{theorem}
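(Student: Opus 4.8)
The plan is to prove strong duality between Problem~\eqref{eq_problemFormulation_CCOC} and the Lagrangian dual \eqref{eq_lambda_optimal_policy_eq}/\eqref{bilevel}, with optimal multiplier $\lambda^\star$, and to exhibit an optimal primal policy among the $\lambda^\star$-optimal mixed policies. I work throughout over $\Pi_{\text{mix}}$: by the equivalence of stochastic causal and mixed policies \cite{fainberg1982non}, the value of \eqref{eq_problemFormulation_CCOC} over $\Pi$ equals $p^\star := \inf\{C_0^\pi(\tilde{x}_0) : \pi\in\Pi_{\text{mix}},\ V_0^\pi(\tilde{x}_0)\ge\alpha\}$, and a minimizer over $\Pi_{\text{mix}}$ gives a minimizer over $\Pi$. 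Set $q(\lambda) := \inf_{\pi\in\Pi_{\text{mix}}} C_0^\pi(\tilde{x}_0)-\lambda V_0^\pi(\tilde{x}_0)+\lambda\alpha$. The easy half is weak duality: for any feasible $\pi$ and any $\lambda\ge0$, since $\lambda\ge0$ and $V_0^\pi(\tilde{x}_0)-\alpha\ge0$, we have $C_0^\pi(\tilde{x}_0)\ge C_0^\pi(\tilde{x}_0)-\lambda(V_0^\pi(\tilde{x}_0)-\alpha)\ge q(\lambda)$; taking the infimum over feasible $\pi$ yields $p^\star\ge q(\lambda^\star)$.

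It therefore suffices to produce $\pi_{\lambda^\star}\in\Pi_{\text{mix},\lambda^\star}$ that is feasible ($V_0^{\pi_{\lambda^\star}}(\tilde{x}_0)\ge\alpha$) and additionally satisfies the complementary slackness condition $\lambda^\star(V_0^{\pi_{\lambda^\star}}(\tilde{x}_0)-\alpha)=0$. Indeed, since $\pi_{\lambda^\star}$ minimizes $C_0^\pi(\tilde{x}_0)-\lambda^\star V_0^\pi(\tilde{x}_0)$, we get $C_0^{\pi_{\lambda^\star}}(\tilde{x}_0)=q(\lambda^\star)+\lambda^\star(V_0^{\pi_{\lambda^\star}}(\tilde{x}_0)-\alpha)$; complementary slackness then gives $C_0^{\pi_{\lambda^\star}}(\tilde{x}_0)=q(\lambda^\star)\le p^\star$, while feasibility gives $C_0^{\pi_{\lambda^\star}}(\tilde{x}_0)\ge p^\star$, so $C_0^{\pi_{\lambda^\star}}(\tilde{x}_0)=p^\star=q(\lambda^\star)$, i.e., $\pi_{\lambda^\star}$ solves \eqref{eq_problemFormulation_CCOC} and strong duality holds. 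If $\lambda^\star=0$ this is immediate: complementary slackness is automatic, and the policy whose existence is asserted by the second constraint of \eqref{bilevel} is then a cost-minimizing mixed policy with $V_0\ge\alpha$, hence optimal. The boundary cases $\alpha\le\underline V(\tilde{x}_0)$ (forcing $\lambda^\star=0$) and $\alpha=\overline V(\tilde{x}_0)$ (forcing every feasible policy, in particular $\pi_{\lambda^\star}$, to have $V_0=\alpha$) are likewise direct.

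The substantive case is $\lambda^\star>0$ with $\alpha$ interior to $[\underline V(\tilde{x}_0),\overline V(\tilde{x}_0)]$, where I must show that some $\lambda^\star$-optimal mixed policy has safety \emph{exactly} $\alpha$. Let $\underline v := \inf\{V_0^\pi(\tilde{x}_0):\pi\in\Pi_{\text{mix},\lambda^\star}\}$ and $\overline v := \sup\{V_0^\pi(\tilde{x}_0):\pi\in\Pi_{\text{mix},\lambda^\star}\}$; feasibility of \eqref{bilevel} at $\lambda^\star$ gives $\overline v\ge\alpha$, and the key is $\underline v\le\alpha$, which is where minimality of $\lambda^\star$ enters. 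By Proposition~\ref{thm_convexity_of_mixed_performance_sets}, $P_{\Pi_{\text{mix}},\tilde{x}_0}$ is convex, so $P^\star_{\Pi_{\text{mix}},\tilde{x}_0}$ is a convex non-decreasing function and, for $\lambda\ge0$, the $\lambda$-optimal performances are exactly the Pareto-front points $(P^\star_{\Pi_{\text{mix}},\tilde{x}_0}(p),p)$ with $\lambda\in\partial P^\star_{\Pi_{\text{mix}},\tilde{x}_0}(p)$. Because subgradients of a convex function are non-decreasing, for every $\lambda<\min\partial P^\star_{\Pi_{\text{mix}},\tilde{x}_0}(\alpha)$ no $p\ge\alpha$ admits $\lambda\in\partial P^\star_{\Pi_{\text{mix}},\tilde{x}_0}(p)$, i.e., all $\lambda$-optimal policies have safety $<\alpha$, so such $\lambda$ are infeasible in \eqref{bilevel}; hence $\lambda^\star\ge\min\partial P^\star_{\Pi_{\text{mix}},\tilde{x}_0}(\alpha)$. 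Conversely $\min\partial P^\star_{\Pi_{\text{mix}},\tilde{x}_0}(\alpha)$ is a slope at $\alpha$, so the point of safety $\alpha$ is optimal for that value of the multiplier, which is therefore feasible in \eqref{bilevel}, giving $\lambda^\star\le\min\partial P^\star_{\Pi_{\text{mix}},\tilde{x}_0}(\alpha)$. Thus $\lambda^\star=\min\partial P^\star_{\Pi_{\text{mix}},\tilde{x}_0}(\alpha)\in\partial P^\star_{\Pi_{\text{mix}},\tilde{x}_0}(\alpha)$, so $\alpha\in[\underline v,\overline v]$.

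Finally, by Corollary~\ref{cor_mixing_two_policies_is_enough} and Lemma~\ref{thm_AttainabilityofMixedPolicies} the $\lambda^\star$-optimal performances form a convex segment of $P_{\Pi_{\text{mix}},\tilde{x}_0}$ whose $V$-projection is the \emph{closed} interval $[\underline v,\overline v]$ — closedness holding because $\underline v$ and $\overline v$ are attained by $\lambda^\star$-optimal deterministic Markov policies produced by a secondary recursion that, at each time-step, minimizes (resp.\ maximizes) safety over the set of $J^\star$-minimizing inputs, which is compact by Assumption~\ref{assumption_of_attainability}. Since $\alpha\in[\underline v,\overline v]$, the mixture of two $\lambda^\star$-optimal deterministic Markov policies attaining $\underline v$ and $\overline v$, with weights chosen so the convex combination of their safety values equals $\alpha$, is a $\lambda^\star$-optimal mixed policy with safety exactly $\alpha$; this is the desired $\pi_{\lambda^\star}$, measurable by Theorem~\ref{th_recursion} and Lemma~\ref{thm_AttainabilityofMixedPolicies}. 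The main obstacle is precisely this complementary-slackness step: without a constraint qualification one cannot invoke KKT directly, and the resolution is to exploit the minimization over $\lambda$ in \eqref{bilevel}, which pins $\lambda^\star$ to the smallest subgradient of the Pareto front at $\alpha$, together with the convexity of the mixed-policy performance set (Proposition~\ref{thm_convexity_of_mixed_performance_sets}) and the attainability results to realize the active constraint by an actual policy.
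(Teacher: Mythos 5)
Your route is genuinely different from the paper's. The paper posits an optimal argument $\pi^{\star}$ of Problem \eqref{eq_problemFormulation_CCOC}, shows via convexity of $P_{\Pi_{\text{mix}},\Tilde{x}_0}$ (Proposition \ref{thm_convexity_of_mixed_performance_sets}) that $\pi^{\star}$ must be $\lambda$-optimal for some multiplier $\lambda$, and then invokes the monotonicity Lemma \ref{lem_monotonicity} to force $\lambda<\lambda^{\star}$, contradicting minimality of $\lambda^{\star}$ in \eqref{bilevel}. You instead run the classical geometric-multiplier argument: weak duality plus complementary slackness, which reduces everything to exhibiting a $\lambda^{\star}$-optimal mixed policy with safety \emph{exactly} $\alpha$, and you then build that policy as a two-point mixture --- precisely the object the algorithm constructs in \eqref{eq_stochastic_policies_ratios}. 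This buys an explicit strong-duality statement and a constructive optimizer rather than a pure contradiction, and it does not presuppose that Problem \eqref{eq_problemFormulation_CCOC} attains its infimum, which the paper's proof tacitly assumes when it writes ``let $\pi^{\star}$ be an optimal argument.''

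The price is the one step you assert without proof: that $\underline{v}$ and $\overline{v}$ are \emph{attained} by $\lambda^{\star}$-optimal deterministic Markov policies ``produced by a secondary recursion'' that optimizes safety over the set of minimizers of \eqref{eq_our_dp_recursion_innerdual}. This lexicographic-DP claim is nowhere in the paper and is not a one-liner: one must show that the restricted action sets form a measurable, compact-valued correspondence, that a measurable selection exists, and that the secondary value iteration indeed characterizes the extremal safety among all conserving policies. It is needed exactly when $\alpha$ sits at an endpoint of $[\underline{v},\overline{v}]$ --- in other words, it is your substitute for primal attainment. Relatedly, your inequality $\lambda^{\star}\leq\min\partial P^{\star}_{\Pi_{\text{mix}},\Tilde{x}_0}(\alpha)$ silently uses that the Pareto point at level $\alpha$ is achieved by an actual mixed policy, which Proposition \ref{thm_convexity_of_mixed_performance_sets} does not provide (the performance set is a convex hull, not shown to be closed). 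In the generic case $\alpha\in(\underline{v},\overline{v})$ your argument closes without either claim, since the definitions of $\inf$ and $\sup$ already supply achieved safeties strictly on both sides of $\alpha$ and Lemma \ref{thm_AttainabilityofMixedPolicies} lets you mix them to hit $\alpha$; so either prove the secondary-recursion lemma, or do what the paper does and assume a primal minimizer exists, then note that strong duality forces it to be $\lambda^{\star}$-optimal with safety exactly $\alpha$.
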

\begin{proof} 
We assume $\lambda^{\star}$ is attained, i.e., the constraints in Problem \eqref{bilevel} must hold. Hence, there exists at least one $\lambda^{\star}$-optimal policy $\pi_{\lambda^{\star}}\in\Pi_{\text{mix},\lambda^{\star}}$ which yields $V_0^{\pi_{\lambda^{\star}}}(\Tilde{x}_0)\geq \alpha$. 

Exploiting the equivalence of mixed and stochastic causal policies, let $\pi^{\star}\in\Pi_{\text{mix}}$ be an optimal argument of Problem \eqref{eq_problemFormulation_CCOC} with associated control cost $C_0^{\pi^{\star}}(\Tilde{x}_0)$ and safety $V_0^{\pi^{\star}}(\Tilde{x}_0)$. Assume, for the sake of contradiction, that $\pi_{\lambda^{\star}}$ is not an optimal argument of Problem \eqref{eq_problemFormulation_CCOC}. Then, $C_0^{\pi^{\star}}(\Tilde{x}_0)<C_0^{\pi_{\lambda^{\star}}}(\Tilde{x}_0)$ by optimality of $\pi^{\star}$ in Problem \eqref{eq_problemFormulation_CCOC} and consequently $V_0^{\pi^{\star}}(\Tilde{x}_0) < V_0^{\pi_{\lambda^{\star}}}(\Tilde{x}_0)$ by optimality of $\pi_{\lambda^{\star}}$ in Problem \eqref{bilevel}.

If there is a $\lambda$ leading to $\pi^{\star}\in\Pi_{\text{mix},\lambda}$, we must have that for any $\overline{\pi},\underline{\pi}\in\Pi_{\text{mix}}$ with 
$V_0^{\underline{\pi}}(\Tilde{x}_0)\leq V_0^{\pi^{\star}}(\Tilde{x}_0)\leq V_0^{\overline{\pi}}(\Tilde{x}_0)$,
\begin{align*}
    C_0^{\pi^{\star}}(\Tilde{x}_0)-\lambda V_0^{\pi^{\star}}(\Tilde{x}_0) &\leq C_0^{\overline{\pi}}(\Tilde{x}_0)-\lambda V_0^{\overline{\pi}}(\Tilde{x}_0) \\ \Leftrightarrow \lambda V_0^{\overline{\pi}}(\Tilde{x}_0)-\lambda V_0^{\pi^{\star}}(\Tilde{x}_0) &\leq C_0^{\overline{\pi}}(\Tilde{x}_0)-C_0^{\pi^{\star}}(\Tilde{x}_0)  \\
    \intertext{and similarly}
     C_0^{\pi^{\star}}(\Tilde{x}_0)-\lambda V_0^{\pi^{\star}}(\Tilde{x}_0) &\leq C_0^{\underline{\pi}}(\Tilde{x}_0)-\lambda V_0^{\underline{\pi}}(\Tilde{x}_0) \\ \Leftrightarrow \lambda V_0^{\underline{\pi}}(\Tilde{x}_0)-\lambda V_0^{\pi^{\star}}(\Tilde{x}_0) &\leq  C_0^{\underline{\pi}}(\Tilde{x}_0)-C_0^{\pi^{\star}}(\Tilde{x}_0),\\
    \intertext{which combined leads to}
    \frac{C_0^{\underline{\pi}}(\Tilde{x}_0)-C_0^{\pi^{\star}}(\Tilde{x}_0)}{V_0^{\underline{\pi}}(\Tilde{x}_0)-V_0^{\pi^{\star}}(\Tilde{x}_0)} &\leq  \ \lambda \leq \frac{C_0^{\overline{\pi}}(\Tilde{x}_0)-C_0^{\pi^{\star}}(\Tilde{x}_0)}{V_0^{\overline{\pi}}(\Tilde{x}_0)-V_0^{\pi^{\star}}(\Tilde{x}_0)}.
\end{align*}
Under these bounds a feasible $\lambda$ always exists if the Pareto front is convex, which is indeed the case for mixed policies by Proposition \ref{thm_convexity_of_mixed_performance_sets}. 

However, then $\lambda<\lambda^{\star}$ by Lemma \ref{lem_monotonicity}. 
This violates the assumption that $\lambda^{\star}$ is the attained minimum. Hence $\pi_{\lambda^{\star}}$ is an optimal argument for Problem \eqref{eq_problemFormulation_CCOC}.
\end{proof} 
\rev{Recall that \eqref{eq_lambda_optimal_policy_eq} is the Lagrange dual of \eqref{eq_problemFormulation_CCOC} and equivalent to \eqref{bilevel}. Hence, the equivalence of \eqref{eq_problemFormulation_CCOC} and \eqref{bilevel} implies strong duality of \eqref{eq_problemFormulation_CCOC} and \eqref{eq_lambda_optimal_policy_eq}. Intuitively, strong duality holds because the Pareto front $P_{\text{mix},\Tilde{x}_0}^{\star}(\cdot)$ is convex under mixed policies and we can achieve any extremum point of this set as a deterministic Markov policy (or mixture thereof). Therefore, by pushing the safety constraint into the cost function of the Lagrange dual we can use DP to solve the resulting, potentially non-convex, minimization problem. This is in contrast to convex methods that directly consider the constraints in the optimization problem like \cite{schmid_2,paulson,patil_1,wang_2,raghuraman,bavdekar}. While these methods are computationally superior, they conservatively approximate the joint chance constraints and require convex safe sets, as well as specific classes of dynamics and noise distributions. For instance, consider the stochastic Model Predictive Controller in \cite{mesbah2016stochastic}. Given non-convex constraints, strong duality with its Lagrange dual may not hold. In fact, \cite{mesbah2016stochastic} computes input sequences, which can be interpreted as a trivial policy applying the same input
to all states at each point in time. This would be equivalent to optimizing over a subclass of
deterministic Markov policies; our results suggest that the resulting Performance Set might not be convex, leading to
a duality gap, and that optimizing over mixtures of input trajectories could recover strong duality.}

Interestingly, if the Pareto front $P_{\text{mix},\Tilde{x}_0}^{\star}(\cdot)$ is strictly convex, then the set of $\lambda^{\star}$-optimal deterministic Markov policies will become a singleton. Hence, the optimal mixed policy $\pi_{\lambda^{\star}}$ will be equivalent to a deterministic Markov policy. \rev{Conversely, when restricting Problem \eqref{eq_problemFormulation_CCOC} to deterministic Markov policies strong duality does not necessarily hold since the Performance Set of deterministic Markov policies might be non-convex.}

\section{Algorithmic Solution}\label{sec_algorithm}

\iffullversion
Next, we design an optimization procedure to solve Problem \eqref{bilevel} and provide conditions to check feasibility. 
\subsection{Feasibility Check and Boundary Solutions}
\fi
\rev{Following \cite{Ono_2}, before solving Problem \eqref{eq_problemFormulation_CCOC}, one might first check its feasibility and triviality. By the latter we mean that the minimum cost policy is already safe enough. This can be checked by running recursion \eqref{eq_max_invariance_evaluation}, restricting to inputs optimal under the value functions obtained from \eqref{eq_max_cost_evaluation}. 
On the contrary, if the policy associated with the maximum safety recursion \eqref{eq_max_invariance_evaluation} yields a safety less than $\alpha$, then Problem \eqref{eq_problemFormulation_CCOC} is infeasible.} %
\iffullversion

    Note that the policies associated with \eqref{eq_max_cost_evaluation} and \eqref{eq_max_invariance_evaluation} are not necessarily unique and the recursions either only optimize for control cost or safety, respectively. 
    However, it is still desirable to recover a preference for policies that, e.g., yield higher safety at the same control cost or lower control cost at the same safety. Therefore, the following Proposition is useful (see also Fig.~\ref{fig_border_case_examples}). 
    
    
    
    
    \ifsingleCol
        \begin{figure}[t!]
        \centering
            \includegraphics[height=3cm]{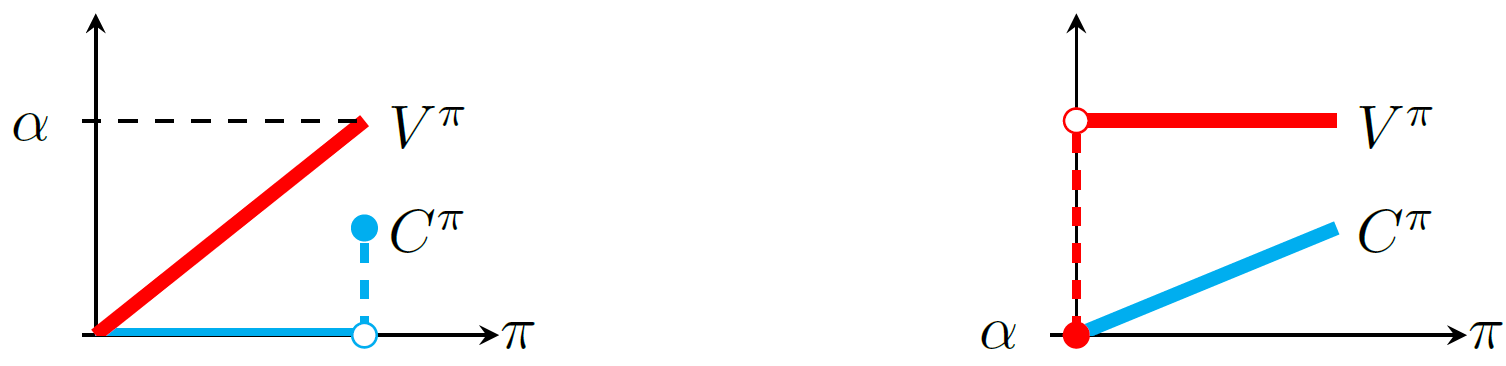}
            \caption{In the left plot, we can choose from policies which have a safety arbitrarily close to $\alpha$ and a control cost of $C$ or a policy that attains $\alpha$ but at cost $C+\delta$, $\delta>0$. Then, for any $\lambda$, there always exists a policy $\pi$ with safety close enough to $\alpha$ such that it is not optimal to incur the additional cost $\delta$, i.e., $\lambda(\alpha-V^{\pi})<\delta$. The right plot depicts a similar border case.}
            \label{fig_border_case_examples}
        \end{figure}
    \else
        \begin{figure}[t!]
        \centering
            \begin{tikzpicture}
                \begin{axis}[width=0.45\columnwidth,%
                            xlabel={$\pi$}, ylabel shift = -.1cm, xlabel shift = -.1cm, xtick={0,2}, ytick={0,2}, xtick style={draw=none}, ytick style={draw=none},axis x line = bottom,axis y line = left, 
            axis lines=middle, xlabel style={at={(0,0)}, right,yshift=0pt,xshift=65pt}, ylabel style={at={(0,0)}, right,yshift=55pt,xshift=-15pt},ymax=1.5,xmax=1.5]
                \addplot +[line width=0.8mm, mark=none,cyan] coordinates {(0,0) (1,0)};
                \addplot +[line width=0.8mm, mark=none,red] coordinates {(0,0) (1,1)};
                \addplot +[mark=none,black, dashed] coordinates {(-.05,1) (1,1)};
                \addplot[bholdot] coordinates{(1,0)};
                \addplot[bsoldot] coordinates{(1,.5)};
                \addplot +[mark=none,cyan, dashed, very thick] coordinates {(1,0) (1,.5)};
                \end{axis}
                \draw (2.3,1.2) node {$V^{\pi}_0(\Tilde{x}_0)$};
                \draw (2.3,0.6) node {$C^{\pi}_0(\Tilde{x}_0)$};
                \draw (-.3,1.2) node {$\alpha$};
            \end{tikzpicture}
            \hfill
            \begin{tikzpicture}
                \begin{axis}[width=0.45\columnwidth,%
                            xlabel={$\pi$}, ylabel shift = -.1cm, xlabel shift = -.1cm, xtick={0,2}, ytick={0,2}, xtick style={draw=none}, ytick style={draw=none},axis x line = bottom,axis y line = left, 
            axis lines=middle, xlabel style={at={(0,0)}, right,yshift=0pt,xshift=65pt}, ylabel style={at={(0,0)}, right,yshift=55pt,xshift=-15pt},ymax=1.5,xmax=1.5]
                \addplot +[line width=0.8mm, mark=none,red] coordinates {(0,1) (1,1)};
                \addplot +[line width=0.8mm, mark=none,cyan] coordinates {(0,0) (1,.5)};
                \addplot +[mark=none,black, dashed] coordinates {(-.1,0) (0,0)};
                \addplot[rholdot] coordinates{(0,1)};
                \addplot[rsoldot] coordinates{(0,0)};
                \addplot +[mark=none,red, dashed, very thick] coordinates {(0,0) (0,1)};
                \end{axis}
                \draw (2.3,1.2) node {$V^{\pi}_0(\Tilde{x}_0)$};
                \draw (2.3,0.6) node {$C^{\pi}_0(\Tilde{x}_0)$};
                \draw (-.3,0) node {$\alpha$};
            \end{tikzpicture}
            \caption{In the left plot, we can choose from policies which have a safety arbitrarily close to $\alpha$ and a control cost of $C$ or a policy that attains $\alpha$ but at cost $C+\delta$, $\delta>0$. Then, for any $\lambda$, there always exists a policy $\pi$ with safety close enough to $\alpha$ such that it is not optimal to incur the additional cost $\delta$, i.e., $\lambda(\alpha-V^{\pi}_0)<\delta$. The right plot depicts a similar border case.}
            \label{fig_border_case_examples}
        \end{figure}
    \fi

    \smallskip
    \begin{proposition} (Border Case Suboptimality)
    \label{pro_Border_cases_method} 
    Take any $\lambda>0$. Then, 
    \begin{align*}
        C^{\pi_{\lambda}}_0(\Tilde{x}_0)-\underline{C}(\Tilde{x}_0)&\leq \lambda (V^{\pi_{\lambda}}_0(\Tilde{x}_0)- \underline{V}(\Tilde{x}_0)),
    \end{align*}
    which goes to zero as $\lambda$ goes to zero.
    Moreover, if $\overline{C}(\Tilde{x}_0)$ is bounded. Then, 
        \begin{align*}
        \overline{V}(\Tilde{x}_0)- V^{\pi_{\lambda}}_0(\Tilde{x}_0)&\leq \frac{\overline{C}(\Tilde{x}_0)-C^{\pi_{\lambda}}_0(\Tilde{x}_0)}{\lambda},
    \end{align*} 
    which goes to zero as $\lambda$ goes to infinity.
    \end{proposition}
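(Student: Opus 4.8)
The plan is to exploit the defining optimality property of the $\lambda$-optimal policy $\pi_{\lambda}$, namely that it minimizes $C_0^{\pi}(\Tilde{x}_0)-\lambda V_0^{\pi}(\Tilde{x}_0)$ over $\Pi_{\text{mix}}$, by testing it against two specific competitors: the minimum-cost policy $\underline{\pi}\in\Pi_{\text{dm}}\subseteq\Pi_{\text{mix}}$ returned by recursion \eqref{eq_max_cost_evaluation}, which satisfies $C_0^{\underline{\pi}}(\Tilde{x}_0)=\underline{C}(\Tilde{x}_0)$ and $V_0^{\underline{\pi}}(\Tilde{x}_0)=\underline{V}(\Tilde{x}_0)$, and the maximum-safety policy $\overline{\pi}\in\Pi_{\text{dm}}\subseteq\Pi_{\text{mix}}$ returned by recursion \eqref{eq_max_invariance_evaluation}, which satisfies $C_0^{\overline{\pi}}(\Tilde{x}_0)=\overline{C}(\Tilde{x}_0)$ and $V_0^{\overline{\pi}}(\Tilde{x}_0)=\overline{V}(\Tilde{x}_0)$. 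I would also record the elementary facts that $\underline{C}(\Tilde{x}_0)$ is the globally minimal expected cost, $\overline{V}(\Tilde{x}_0)$ the globally maximal safety, and $V_0^{\pi}(\Tilde{x}_0)\in[0,1]$ for every $\pi$.

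For the first inequality, I would write down $\lambda$-optimality of $\pi_{\lambda}$ against $\underline{\pi}$, i.e.
\begin{align*}
    C_0^{\pi_{\lambda}}(\Tilde{x}_0) - \lambda V_0^{\pi_{\lambda}}(\Tilde{x}_0) \;\leq\; C_0^{\underline{\pi}}(\Tilde{x}_0) - \lambda V_0^{\underline{\pi}}(\Tilde{x}_0) \;=\; \underline{C}(\Tilde{x}_0) - \lambda\, \underline{V}(\Tilde{x}_0),
\end{align*}
and rearrange terms to obtain $C_0^{\pi_{\lambda}}(\Tilde{x}_0) - \underline{C}(\Tilde{x}_0) \leq \lambda\big(V_0^{\pi_{\lambda}}(\Tilde{x}_0) - \underline{V}(\Tilde{x}_0)\big)$. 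Since the left-hand side is nonnegative by minimality of $\underline{C}(\Tilde{x}_0)$, so is the right-hand side, and bounding $V_0^{\pi_{\lambda}}(\Tilde{x}_0)-\underline{V}(\Tilde{x}_0)\leq 1$ shows the right-hand side is at most $\lambda$, which goes to zero as $\lambda\to 0$.

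For the second inequality, I would repeat the same step with $\overline{\pi}$ in place of $\underline{\pi}$, namely
\begin{align*}
    C_0^{\pi_{\lambda}}(\Tilde{x}_0) - \lambda V_0^{\pi_{\lambda}}(\Tilde{x}_0) \;\leq\; \overline{C}(\Tilde{x}_0) - \lambda\, \overline{V}(\Tilde{x}_0),
\end{align*}
then rearrange and divide by $\lambda>0$ to get $\overline{V}(\Tilde{x}_0)-V_0^{\pi_{\lambda}}(\Tilde{x}_0)\leq\big(\overline{C}(\Tilde{x}_0)-C_0^{\pi_{\lambda}}(\Tilde{x}_0)\big)/\lambda$. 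By maximality of $\overline{V}(\Tilde{x}_0)$ the left-hand side is nonnegative, hence the numerator on the right is nonnegative; and since $C_0^{\pi_{\lambda}}(\Tilde{x}_0)\geq\underline{C}(\Tilde{x}_0)\geq 0$, the numerator is bounded above by $\overline{C}(\Tilde{x}_0)$, which is finite by hypothesis. Thus the right-hand side is at most $\overline{C}(\Tilde{x}_0)/\lambda$ and vanishes as $\lambda\to\infty$.

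The whole argument is elementary and I do not anticipate a genuine obstacle; the only points needing care are (a) that $\underline{\pi}$ and $\overline{\pi}$ really belong to $\Pi_{\text{mix}}$, so that $\lambda$-optimality of $\pi_{\lambda}$ can be invoked against them, and (b) tracking the signs and the uniform bounds $V_0^{\pi}\in[0,1]$ and $0\le\underline{C}(\Tilde{x}_0)\le C_0^{\pi_{\lambda}}(\Tilde{x}_0)\le\overline{C}(\Tilde{x}_0)<\infty$ so that the two displayed inequalities legitimately yield the claimed limits.
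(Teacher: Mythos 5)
Your proposal is correct and follows essentially the same route as the paper's proof: in both cases the two inequalities are obtained by testing the $\lambda$-optimality of $\pi_{\lambda}$ against the minimum-cost and maximum-safety policies respectively, and the limits follow from $V_0^{\pi}\in[0,1]$ and the boundedness of $\overline{C}(\Tilde{x}_0)$. No substantive differences to report.
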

    \smallskip
    \begin{proof} 
    To show the first statement, fix $\lambda>0$. By $\pi_{\lambda}$ being $\lambda$-optimal, we have
        \begin{align*}
            C^{\pi_{\lambda}}_0(\Tilde{x}_0)-\lambda V^{\pi_{\lambda}}_0(\Tilde{x}_0)&\leq \underline{C}(\Tilde{x}_0)-\lambda \underline{V}(\Tilde{x}_0).
        \end{align*}
       Rearranging above equation yields the bound. Taking $\lambda$ to zero, convergence to zero is guaranteed since $V^{\pi_{\lambda}}_0(\Tilde{x}_0), \underline{V}(\Tilde{x}_0)$ are bounded between zero and one. For the second statement, by $\pi_{\lambda}$ being $\lambda$-optimal we have
        \begin{align*}
            C^{\pi_{\lambda}}_0(\Tilde{x}_0)-\lambda V^{\pi_{\lambda}}_0(\Tilde{x}_0)&\leq \overline{C}(\Tilde{x}_0)-\lambda \overline{V}(\Tilde{x}_0)
    \end{align*}
    Combined with $\overline{V}(\Tilde{x}_0)\geq V^{\pi_{\lambda}}_0(\Tilde{x}_0)$, we have $\overline{C}(\Tilde{x}_0)\geq C^{\pi_{\lambda}}_0(\Tilde{x}_0)$.
    Rearranging above equation yields the bound, which goes to zero with $\lambda$ going to infinity since we assumed $\overline{C}(\Tilde{x}_0)$ to be bounded and $\overline{C}(\Tilde{x}_0)\geq C^{\pi_{\lambda}}_0(\Tilde{x}_0) \geq 0$.
    \end{proof}
    
    Most oftenly, however, the maximum safe policy is too costly and the minimum cost policy not safe enough, leading to a trade-off. We do not explore this trade-off much, but aim directly for a solution that attains the minimum cost at the required safety.
    
    \subsection{Bisection Algorithm}
\else
    
\fi
Assume that Problem \eqref{eq_problemFormulation_CCOC} is neither infeasible nor trivial, let $\lambda^{\star}$ be the optimal solution to Problem \eqref{bilevel} and recall that any associated $\lambda^{\star}$-optimal mixed policy is constructed from a set of $\lambda^{\star}$-optimal deterministic Markov policies. 
Consider $\pi_1,\pi_2\in \Pi_{\text{dm},\lambda^{\star}}$, sampled with probabilities $\mu, 1-\mu$, respectively. Then, the safety of the corresponding mixed policy $\pi_{\text{mix}}$ is 
 $   V_{0}^{\pi_{\text{mix}}}(x_0) = \mu V_{0}^{\pi_1}(x_0) + (1-\mu) V_{0}^{\pi_2}(x_0)$.
For $\lambda^{\star}$ to be the optimal solution to Problem \eqref{bilevel}, we must have $V_{0}^{\pi_{\text{mix}}}(x_0)\geq \alpha$, and hence there must exist at least one policy $\pi\in\Pi_{\text{dm},\lambda^{\star}}$, which has a safety of at least $\alpha$.

Now, by optimality of Problem \eqref{bilevel}, for any $\underline{\lambda}\in\mathbb{R}_{\geq 0}$ with $\underline{\lambda}<\lambda^{\star}$, we have that $V_{0}^{\pi_{\text{mix}}}(x_0)<\alpha$. This implies that there does not exist $\pi\in\Pi_{\text{dm},\underline{\lambda}}$ with $V_{0}^{\pi}(x_0)\geq \alpha$. Otherwise, choosing $\pi_{\text{mix}}$ to sample $\pi$ with probability one would yield a feasible mixed policy. On the other hand, for any $\overline{\lambda}$ with $\overline{\lambda}>\lambda^{\star}$, every $\overline{\lambda}$-optimal deterministic Markov policy has a safety of at least $\alpha$, since there exists a policy in $\Pi_{\text{dm},\lambda^{\star}}$ with a safety of at least $\alpha$ (Remark \ref{remark_mon_lemma}). 

Hence, based on two policies $\pi_{\underline{\lambda}}\in\Pi_{\text{dm},\underline{\lambda}}$ and $\pi_{\overline{\lambda}}\in\Pi_{\text{dm},\overline{\lambda}}$, we can construct a mixed policy with safety greater or equal to $\alpha$, as assigning probability one to $\pi_{\overline{\lambda}}$ and zero to $\pi_{\underline{\lambda}}$ is always feasible. Ideally, one would assign an as high as possible probability measure to $\pi_{\underline{\lambda}}$, since it is associated with a lower control cost (Remark \ref{remark_mon_lemma}). This is achieved by linearly interpolating between $\pi_{\underline{\lambda}}$ and $\pi_{\overline{\lambda}}$ in such a way, that, in expectation, we obtain a safety of exactly $\alpha=p_{\overline{\lambda}}V_0^{\pi_{\overline{\lambda}}}(\Tilde{x}_0) + (1-p_{\overline{\lambda}})V_0^{\pi_{\underline{\lambda}}}(\Tilde{x}_0)$ (see Fig.~\ref{fig_outer_loop_opt_and_stochastic_policies}).
This in turn can be achieved by the policy
\begin{align}
\label{eq_stochastic_policies_ratios}
    \pi_{\text{mix}}=\begin{cases}
\pi_{\overline{\lambda}} & \text{with probability } p_{\overline{\lambda}}\!=\!\frac{\alpha - V_0^{\pi_{\underline{\lambda}}}(\Tilde{x}_0)}{V_0^{\pi_{\overline{\lambda}}}(\Tilde{x}_0) - V_0^{\pi_{\underline{\lambda}}}(\Tilde{x}_0)}, \\
\pi_{\underline{\lambda}} & \, \text{otherwise.}
\end{cases}
\end{align}

\ifsingleCol
    \begin{figure}
        \centering
        \includegraphics[height=6cm]{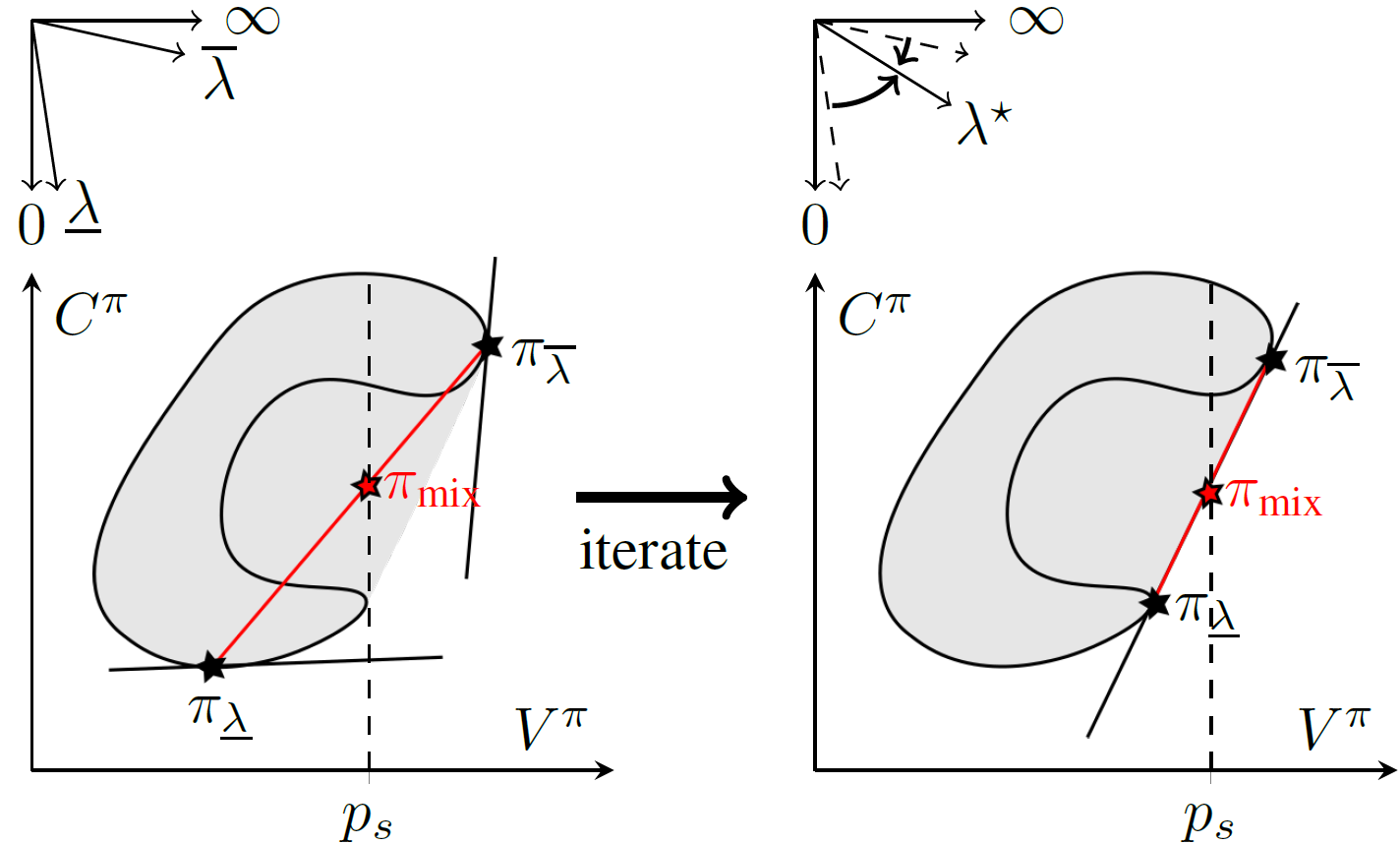}
        \caption{Performance sets $P_{\Pi_{\text{d}},\Tilde{x}_0}$ (bordered set) and its convex hull $P_{\Pi_{\text{mix}},\Tilde{x}_0}$ (grey set), as well as the performance of the respective policies $\underline{\lambda},\overline{\lambda}$ (black stars) and the optimal interpolation $\pi_{\text{mix}}$ according to equation \eqref{eq_stochastic_policies_ratios} (red star). The variable $\lambda$ defines the optimization direction. The DP recursion returns the optimal policy in the performance set in this direction. Along the outer loop iterations $\underline{\lambda}$ approaches $\overline{\lambda}$.}
        \label{fig_outer_loop_opt_and_stochastic_policies}
    \end{figure}
\else
    \ifNonArxivVersion
        \begin{figure}
    \centering
    \resizebox{0.8\columnwidth}{!}{%
    \hspace{-2.2cm}
    \begin{tikzpicture}
        \node [] (label) at (3.5,1.8) {\includegraphics[width=0.8\columnwidth]{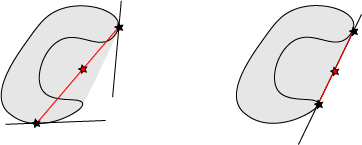} };
        
        \draw (0.7,0.5) node {$\pi_{\underline{\lambda}}$};
        \draw (2.6,2.6) node {$\pi_{\overline{\lambda}}$};
        \draw (1.96,1.85) node {\color{red}$\pi_{\text{mix}}$\color{black}};
        
        \draw (6.5,1.1) node {$\pi_{\underline{\lambda}}$};
        \draw (6.9,1.8) node {\color{red}$\pi_{\text{mix}}$\color{black}};
        \draw (7.2,2.5) node {$\pi_{\overline{\lambda}}$};

        \node [] (label) at (-1.2,0.2) {
        \begin{axis}[width=5cm,%
                xlabel={$V^{\pi}$}, ylabel=$C^{\pi}$,axis lines=middle, height=4.5 cm,xmin=0,xmax=1,xtick={0,.58}, ytick={0}, xticklabels={0,$p_s$}]
            \addplot +[mark=none,black, dashed] coordinates {(.58,0) (.58,1)};        
        \end{axis}};
            \node [] (label) at (3.4,0.2) {
        \begin{axis}[width=5cm,%
                xlabel={$V^{\pi}$}, ylabel=$C^{\pi}$,axis lines=middle, height=4.5 cm,xmin=0,xmax=1,xtick={0,.68}, ytick={0}, xticklabels={0,$p_s$}]
            \addplot +[mark=none,black, dashed] coordinates {(.68,0) (.68,1)};    
        \end{axis}};

        
        \coordinate (compass) at (-1.2,1.3);
        \draw[->] ($(compass) + (0.8,3.3)$) -- ($(compass) + (1.8,3.3)$) ;
        \draw[->] ($(compass) + (0.8,3.3)$) -- ($(compass) + (0.8,2.3)$);
        \draw ($(compass) + (0.8,2.1)$) node {$0$};
        \draw ($(compass) + (2.1,3.3)$) node {$\infty$};
        \draw[->] ($(compass) + (0.8,3.3)$) -- ($(compass) + (0.95,2.3)$);
        \draw ($(compass) + (1.1,2.2)$) node {$\underline{\lambda}$};
        \draw[->] ($(compass) + (0.8,3.3)$) -- ($(compass) + (1.7,3.1)$);
        \draw ($(compass) + (1.9,3)$) node {$\overline{\lambda}$};

        \draw[ultra thick, ->] (2.8,1.8) -- (3.8,1.8) ;
        \draw (3.25,1.5) node {iterate};

        \coordinate (compass) at (3.4,1.3);
        \draw[->] ($(compass) + (0.8,3.3)$) -- ($(compass) + (1.8,3.3)$) ;
        \draw[->] ($(compass) + (0.8,3.3)$) -- ($(compass) + (0.8,2.3)$);
        \draw[->, dashed] ($(compass) + (0.8,3.3)$) -- ($(compass) + (0.95,2.3)$);
        \draw[->, dashed] ($(compass) + (0.8,3.3)$) -- ($(compass) + (1.7,3.1)$);
        \draw[thick, ->] ($(compass) + (0.9,2.8)$) arc (-90:-40:.5);
        \draw[thick, ->] ($(compass) + (1.35,3.2)$) arc (-10:-30:.5);
        \draw ($(compass) + (0.8,2.1)$) node {$0$};
        \draw ($(compass) + (2.1,3.3)$) node {$\infty$};
        \draw[->] ($(compass) + (0.8,3.3)$) -- ($(compass) + (1.6,2.8)$);
        \draw ($(compass) + (1.8,2.7)$) node {$\lambda^{\star}$};
    \end{tikzpicture}
    }
    \caption{Performance sets $P_{\Pi_{\text{d}},\Tilde{x}_0}$ (bordered set) and its convex hull $P_{\Pi_{\text{mix}},\Tilde{x}_0}$ (grey set), as well as the performance of the respective policies $\pi_{\underline{\lambda}},\pi_{\overline{\lambda}}$ (black stars), \rev{the performance of all mixed policies constructable from $\pi_{\underline{\lambda}}$ and $\pi_{\overline{\lambda}}$ (red line),} and the optimal interpolation $\pi_{\text{mix}}$ according to equation \eqref{eq_stochastic_policies_ratios} (red star). The variable $\lambda$ sets the optimization direction. The DP recursion returns the optimal policy in the performance set in this direction. Over the outer loop iterations $\underline{\lambda}$ approaches $\overline{\lambda}$.}
    \label{fig_outer_loop_opt_and_stochastic_policies}
\end{figure}
    \else
        \begin{figure}
            \centering
            \includegraphics[height=5cm]{graphics_single_col/fig5.png}
            \caption{Performance sets $P_{\Pi_{\text{d}},\Tilde{x}_0}$ (bordered set) and its convex hull $P_{\Pi_{\text{mix}},\Tilde{x}_0}$ (grey set), as well as the performance of the respective policies $\underline{\lambda},\overline{\lambda}$ (black stars) and the optimal interpolation $\pi_{\text{mix}}$ according to equation \eqref{eq_stochastic_policies_ratios} (red star). The variable $\lambda$ defines the optimization direction. The DP recursion returns the optimal policy in the performance set in this direction. Along the outer loop iterations $\underline{\lambda}$ approaches $\overline{\lambda}$.}
            \label{fig_outer_loop_opt_and_stochastic_policies}
        \end{figure}
    \fi
\fi


Following \cite{Ono_2}, we propose a bisection algorithm to shrink bounds $[\underline{\lambda},\overline{\lambda}]$ on $\lambda^{\star}$. 
Recursions \eqref{eq_max_cost_evaluation} and \eqref{eq_max_invariance_evaluation} allow us to provide an initial range $[\underline{\lambda}_{\text{init}},\overline{\lambda}_{\text{init}}]$ of values for $\lambda^{\star}$.
\smallskip
\begin{proposition}[Bounds on $\lambda^{\star}$, \cite{pfeiffer}]
The value $\underline{\lambda}_{\text{init}}=0$ always yields a lower bound on $\lambda^{\star}\in\mathbb{R}_{\geq0}$. Assume $\overline{C}(\Tilde{x}_0)$ is bounded and $\overline{V}(\Tilde{x}_0)>\alpha$ and choose $\overline{\lambda}_{\text{init}}=\frac{\overline{C}(\Tilde{x}_0) - \underline{C}(\Tilde{x}_0) }{\overline{V}(\Tilde{x}_0)-\alpha}$. Then, the policy $\pi_{\overline{\lambda}_{\text{init}}}$ achieves a safety greater than $\alpha$. 
\label{thm_upper_lower_bound_on_lambda}
\end{proposition}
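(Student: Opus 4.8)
The plan is to prove the lower and upper bounds separately; the lower bound is essentially definitional. Problem \eqref{bilevel} minimizes over $\lambda\in\mathbb{R}_{\geq 0}$, so any attained minimizer $\lambda^{\star}$ automatically lies in $\mathbb{R}_{\geq 0}$, and hence $\underline{\lambda}_{\text{init}}=0\leq\lambda^{\star}$ with no further argument needed.

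For the upper bound, I would first assume $\overline{C}(\Tilde{x}_0)>\underline{C}(\Tilde{x}_0)$ so that $\overline{\lambda}_{\text{init}}>0$ is well defined (the case of equality forces $\overline{\lambda}_{\text{init}}=0$, which is trivially an upper bound and is handled directly). Let $\pi_{\overline{\lambda}_{\text{init}}}$ be any $\overline{\lambda}_{\text{init}}$-optimal policy; one exists by Lemma \ref{thm_AttainabilityofMixedPolicies}. The key step is to invoke Proposition \ref{pro_Border_cases_method} at $\lambda=\overline{\lambda}_{\text{init}}>0$ and combine it with the fact that $\underline{C}(\Tilde{x}_0)=C_0^{\star}(\Tilde{x}_0)$ lower-bounds the cost of every policy, giving
\[
\overline{V}(\Tilde{x}_0) - V_0^{\pi_{\overline{\lambda}_{\text{init}}}}(\Tilde{x}_0) \;\leq\; \frac{\overline{C}(\Tilde{x}_0) - C_0^{\pi_{\overline{\lambda}_{\text{init}}}}(\Tilde{x}_0)}{\overline{\lambda}_{\text{init}}} \;\leq\; \frac{\overline{C}(\Tilde{x}_0) - \underline{C}(\Tilde{x}_0)}{\overline{\lambda}_{\text{init}}}.
\]
Substituting $\overline{\lambda}_{\text{init}}=\big(\overline{C}(\Tilde{x}_0)-\underline{C}(\Tilde{x}_0)\big)/\big(\overline{V}(\Tilde{x}_0)-\alpha\big)$ collapses the right-hand side to $\overline{V}(\Tilde{x}_0)-\alpha$, which rearranges to $V_0^{\pi_{\overline{\lambda}_{\text{init}}}}(\Tilde{x}_0)\geq\alpha$, the claimed safety guarantee. (Equivalently, one may avoid citing Proposition \ref{pro_Border_cases_method} and derive this directly from $\overline{\lambda}_{\text{init}}$-optimality of $\pi_{\overline{\lambda}_{\text{init}}}$ by comparing it against the maximum-safety policy, whose cost and safety are $\overline{C}(\Tilde{x}_0)$ and $\overline{V}(\Tilde{x}_0)$, and again using $C_0^{\pi_{\overline{\lambda}_{\text{init}}}}(\Tilde{x}_0)\geq\underline{C}(\Tilde{x}_0)$.)

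Finally I would translate this into the statement that $\overline{\lambda}_{\text{init}}$ is an upper bound on $\lambda^{\star}$: since $\pi_{\overline{\lambda}_{\text{init}}}$ is $\overline{\lambda}_{\text{init}}$-optimal and has safety at least $\alpha$, both constraints of Problem \eqref{bilevel} are satisfiable at $\lambda=\overline{\lambda}_{\text{init}}$, so by minimality $\lambda^{\star}\leq\overline{\lambda}_{\text{init}}$; Lemma \ref{lem_monotonicity} with Remark \ref{remark_mon_lemma} additionally confirms that every $\lambda\geq\overline{\lambda}_{\text{init}}$ still yields safety at least $\alpha$. I expect the only genuine obstacle to be the correct choice of comparison point feeding the $\lambda$-optimality inequality (the maximum-safety policy) together with the global cost lower bound $\underline{C}(\Tilde{x}_0)$; everything else is elementary rearrangement, and the degenerate case $\overline{C}=\underline{C}$ is a trivial side remark.
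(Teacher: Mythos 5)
Your proof is correct, and it is worth noting that the paper itself does not prove this proposition at all: it imports the result from \cite{pfeiffer} and only adds a one-sentence remark lifting it from deterministic Markov to mixed policies. Your argument therefore supplies a self-contained derivation that the paper omits, and it is the natural one: the upper bound is exactly the second inequality of Proposition \ref{pro_Border_cases_method} (the $\lambda$-optimality comparison of $\pi_{\overline{\lambda}_{\text{init}}}$ against the maximum-safety policy, i.e.\ $C_0^{\pi_{\overline{\lambda}_{\text{init}}}}(\Tilde{x}_0)-\overline{\lambda}_{\text{init}} V_0^{\pi_{\overline{\lambda}_{\text{init}}}}(\Tilde{x}_0)\leq \overline{C}(\Tilde{x}_0)-\overline{\lambda}_{\text{init}}\overline{V}(\Tilde{x}_0)$) combined with the global lower bound $C_0^{\pi}(\Tilde{x}_0)\geq\underline{C}(\Tilde{x}_0)$, and the substitution of $\overline{\lambda}_{\text{init}}$ then collapses to $V_0^{\pi_{\overline{\lambda}_{\text{init}}}}(\Tilde{x}_0)\geq\alpha$. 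The deduction $\lambda^{\star}\leq\overline{\lambda}_{\text{init}}$ from feasibility in Problem \eqref{bilevel} matches the paper's remark that $\overline{\lambda}_{\text{init}}$ is ``feasible, yet possibly suboptimal.'' Two small caveats: the proposition's wording promises safety \emph{greater than} $\alpha$ while your chain (like the underlying inequality) only delivers $\geq\alpha$ — that discrepancy is in the paper's statement, not your argument; and in the degenerate case $\overline{C}(\Tilde{x}_0)=\underline{C}(\Tilde{x}_0)$ your remark that $\overline{\lambda}_{\text{init}}=0$ is ``trivially an upper bound'' is fine for bounding $\lambda^{\star}$, but the safety claim then holds only for \emph{some} $0$-optimal policy (the maximum-safety one, which is also cost-minimal there), not for an arbitrary one — this is precisely the triviality case the algorithm screens out beforehand, so it is harmless but worth stating as such rather than as fully trivial.
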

\smallskip
Note that \cite{pfeiffer} considers deterministic Markov policies. However, since any $\overline{\lambda}_{\text{init}}$-optimal mixed policy can be constructed from $\overline{\lambda}_{\text{init}}$-optimal deterministic Markov policies, $\overline{\lambda}_{\text{init}}$ is feasible, yet possibly suboptimal for Problem \eqref{bilevel}.

Starting with the range provided in Proposition \ref{thm_upper_lower_bound_on_lambda} we can now solve for $\lambda^{\star}$ using bisection, shrinking the bounds $\underline{\lambda}\leq\lambda^{\star}\leq\overline{\lambda}$ \cite{Ono_2}. 
The test whether $\lambda$ is greater or lower than $\lambda^{\star}$ is performed by evaluating $V_0^{\pi_{\lambda}}$ and comparing it with $\alpha$ (see Lemma \ref{lem_monotonicity}%
\iffullversion 
 and recall Fig.~\ref{fig_subgradients_dual}%
\fi
). Mixing the associated policies $\pi_{\underline{\lambda}}$ and $\pi_{\overline{\lambda}}$ as described in \eqref{eq_stochastic_policies_ratios} yields a mixed policy, whose suboptimality to the solution of Problem \eqref{eq_problemFormulation_CCOC} converges to zero at least exponentially over the number of bisection steps.

\begin{theorem}[Suboptimality Bound, \cite{pfeiffer}]\label{thm_suboptimality_bounds2} Let the bisection algorithm be initialized with $[\underline{\lambda}_{\text{init}}, \overline{\lambda}_{\text{init}}]$. After $M$ bisection steps, the suboptimality of the policy constructed in \eqref{eq_stochastic_policies_ratios} compared to the solution $\pi^{\star}$ of Problem \eqref{eq_problemFormulation_CCOC} is bounded by
\begin{align*}
    C_0^{\pi_{\text{mix}}}(\Tilde{x}_0)\!-\!C_0^{\pi^{\star}}(\Tilde{x}_0)\!&\leq\!p_{\overline{\lambda}}(1\!-\!p_{\overline{\lambda}})(\overline{\lambda}\!-\!\underline{\lambda})(V_0^{\pi_{\overline{\lambda}}}(\Tilde{x}_0)\!-\!V_0^{\pi_{\underline{\lambda}}}(\Tilde{x}_0))\\&\leq 0.25 \left(\frac{1}{2}\right)^M(\overline{\lambda}_{\text{init}}-\underline{\lambda}_{\text{init}}).
\end{align*} 
\end{theorem}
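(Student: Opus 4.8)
The plan is to turn the $\lambda$-optimality of the two deterministic Markov policies $\pi_{\underline{\lambda}}$ and $\pi_{\overline{\lambda}}$ returned by the bisection into two affine lower bounds (supporting hyperplanes) on the cost of \emph{any} policy as a function of its safety, to evaluate these bounds at an optimizer $\pi^{\star}$ of \eqref{eq_problemFormulation_CCOC}, and then to collapse the algebra using the interpolation weight $p_{\overline{\lambda}}$ from \eqref{eq_stochastic_policies_ratios}. The only result doing real work is Lemma~\ref{thm_AttainabilityofMixedPolicies} (together with convexity of the mixed Performance Set, Proposition~\ref{thm_convexity_of_mixed_performance_sets}): it guarantees that $\pi_{\underline{\lambda}},\pi_{\overline{\lambda}}$ are $\lambda$-optimal not only among deterministic Markov policies but among \emph{all} mixed policies, so the hyperplane inequalities may legitimately be tested against $\pi^{\star}$.

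First I would fix the bisection iterate. By the monotonicity test underlying the algorithm together with Lemma~\ref{lem_monotonicity}, the maintained bounds satisfy $0\le\underline{\lambda}\le\overline{\lambda}$ and $V_0^{\pi_{\underline{\lambda}}}(\Tilde{x}_0)<\alpha\le V_0^{\pi_{\overline{\lambda}}}(\Tilde{x}_0)$, so $p_{\overline{\lambda}}\in[0,1]$ in \eqref{eq_stochastic_policies_ratios} is well defined and $C_0^{\pi_{\text{mix}}}(\Tilde{x}_0)=p_{\overline{\lambda}}C_0^{\pi_{\overline{\lambda}}}(\Tilde{x}_0)+(1-p_{\overline{\lambda}})C_0^{\pi_{\underline{\lambda}}}(\Tilde{x}_0)$. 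Using the equivalence of stochastic causal and mixed policies I take a minimizer $\pi^{\star}$ of \eqref{eq_problemFormulation_CCOC} in $\Pi_{\text{mix}}$; feasibility gives $V_0^{\pi^{\star}}(\Tilde{x}_0)\ge\alpha$. Applying $\underline{\lambda}$-optimality of $\pi_{\underline{\lambda}}$ to $\pi^{\star}$, i.e.\ $C_0^{\pi_{\underline{\lambda}}}(\Tilde{x}_0)-\underline{\lambda}V_0^{\pi_{\underline{\lambda}}}(\Tilde{x}_0)\le C_0^{\pi^{\star}}(\Tilde{x}_0)-\underline{\lambda}V_0^{\pi^{\star}}(\Tilde{x}_0)$, and then using $\underline{\lambda}\ge0$ with $V_0^{\pi^{\star}}(\Tilde{x}_0)\ge\alpha$, gives
\[
C_0^{\pi^{\star}}(\Tilde{x}_0)\ \ge\ C_0^{\pi_{\underline{\lambda}}}(\Tilde{x}_0)+\underline{\lambda}\bigl(\alpha-V_0^{\pi_{\underline{\lambda}}}(\Tilde{x}_0)\bigr),
\]
and the identical computation with $\pi_{\overline{\lambda}}$ and $\overline{\lambda}$ yields $C_0^{\pi^{\star}}(\Tilde{x}_0)\ge C_0^{\pi_{\overline{\lambda}}}(\Tilde{x}_0)-\overline{\lambda}\bigl(V_0^{\pi_{\overline{\lambda}}}(\Tilde{x}_0)-\alpha\bigr)$.

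Next I would take the convex combination of these two lower bounds with weights $1-p_{\overline{\lambda}}$ and $p_{\overline{\lambda}}$; the constant parts sum to exactly $C_0^{\pi_{\text{mix}}}(\Tilde{x}_0)$, so after rearranging,
\[
C_0^{\pi_{\text{mix}}}(\Tilde{x}_0)-C_0^{\pi^{\star}}(\Tilde{x}_0)\ \le\ p_{\overline{\lambda}}\,\overline{\lambda}\bigl(V_0^{\pi_{\overline{\lambda}}}(\Tilde{x}_0)-\alpha\bigr)-(1-p_{\overline{\lambda}})\,\underline{\lambda}\bigl(\alpha-V_0^{\pi_{\underline{\lambda}}}(\Tilde{x}_0)\bigr).
\]
Substituting $p_{\overline{\lambda}}$ and $1-p_{\overline{\lambda}}$ from \eqref{eq_stochastic_policies_ratios} one checks $p_{\overline{\lambda}}\bigl(V_0^{\pi_{\overline{\lambda}}}(\Tilde{x}_0)-\alpha\bigr)=(1-p_{\overline{\lambda}})\bigl(\alpha-V_0^{\pi_{\underline{\lambda}}}(\Tilde{x}_0)\bigr)=p_{\overline{\lambda}}(1-p_{\overline{\lambda}})\bigl(V_0^{\pi_{\overline{\lambda}}}(\Tilde{x}_0)-V_0^{\pi_{\underline{\lambda}}}(\Tilde{x}_0)\bigr)$, which collapses the right-hand side to $p_{\overline{\lambda}}(1-p_{\overline{\lambda}})(\overline{\lambda}-\underline{\lambda})\bigl(V_0^{\pi_{\overline{\lambda}}}(\Tilde{x}_0)-V_0^{\pi_{\underline{\lambda}}}(\Tilde{x}_0)\bigr)$, the first claimed inequality. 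For the second, I would bound $p_{\overline{\lambda}}(1-p_{\overline{\lambda}})\le\tfrac14$, note $V_0^{\pi_{\overline{\lambda}}}(\Tilde{x}_0)-V_0^{\pi_{\underline{\lambda}}}(\Tilde{x}_0)\le1$ since safety probabilities lie in $[0,1]$, and use that $M$ bisection steps halve the initial interval, so $\overline{\lambda}-\underline{\lambda}=(1/2)^M(\overline{\lambda}_{\text{init}}-\underline{\lambda}_{\text{init}})$.

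The algebraic collapse in the last step is routine; the one genuinely load-bearing point, as noted, is that the \emph{deterministic Markov} policies delivered by the DP recursion \eqref{eq_our_dp_recursion_innerdual} are $\lambda$-optimal over all of $\Pi_{\text{mix}}$, so that the two supporting-hyperplane inequalities are valid against $\pi^{\star}\in\Pi_{\text{mix}}$; I would cite Lemma~\ref{thm_AttainabilityofMixedPolicies} for this. A minor point to dispatch in passing is well-definedness of $p_{\overline{\lambda}}$, which follows from the strict separation $V_0^{\pi_{\underline{\lambda}}}(\Tilde{x}_0)<\alpha\le V_0^{\pi_{\overline{\lambda}}}(\Tilde{x}_0)$ preserved by the bisection under the non-triviality assumption on \eqref{eq_problemFormulation_CCOC}.
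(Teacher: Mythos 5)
Your proof is correct. The paper itself gives no argument for this theorem — it is imported verbatim from \cite{pfeiffer} — so there is nothing to compare against, but your supporting-hyperplane derivation is exactly the standard one and every step checks out: the two $\lambda$-optimality inequalities are legitimately tested against $\pi^{\star}\in\Pi_{\text{mix}}$ because Lemma~\ref{thm_AttainabilityofMixedPolicies} upgrades the DP-computed deterministic Markov policies to $\lambda$-optimality over all of $\Pi_{\text{mix}}$, the algebraic identities for $p_{\overline{\lambda}}$ and $1-p_{\overline{\lambda}}$ collapse the convex combination to the first bound, and $p_{\overline{\lambda}}(1-p_{\overline{\lambda}})\le\tfrac14$ together with the halving of $\overline{\lambda}-\underline{\lambda}$ gives the second. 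The only loose end is the degenerate case $V_0^{\pi_{\overline{\lambda}}}(\Tilde{x}_0)=V_0^{\pi_{\underline{\lambda}}}(\Tilde{x}_0)$, where $p_{\overline{\lambda}}$ is formally $0/0$; there the invariant forces both safeties to equal $\alpha$, no mixing is needed, and the bound holds trivially, so this is a one-line remark rather than a gap.
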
 
\smallskip

The process is summarized in Algorithm \ref{alg_final}, which runs the bisection algorithm until a prescribed suboptimality gap $\Delta$ is attained, which is guaranteed to be achieved in finitely many iterations by Theorem \ref{thm_suboptimality_bounds2}. The computational performance is thus mainly determined by the inner loop, which is solved using DP. For continuous state and action spaces, the value function has to be approximated, e.g., by using gridding or basis functions \cite{Bertsekas_1,farias_1,schmid2024joint}. 

\iffullversion
\begin{algorithm}
\caption{Joint Chance Constr. Optimal Control}\label{alg:two}
\KwData{$\mathcal{X},\mathcal{U},T,N,\tilde{x}_0,\alpha, \Delta$}
\texttt{\\}
\tcc{Check border cases}
$\overline{V}_0(\Tilde{x}_0) \gets $ Maximum safety recursion \eqref{eq_max_invariance_evaluation}\; 
$\underline{V}_0(\Tilde{x}_0)\gets $ Safety of minimum cost policy, \eqref{eq_max_invariance_evaluation} subject to inputs optimal for \eqref{eq_max_cost_evaluation}\;
\If{$\overline{V}_0(\Tilde{x}_0)\leq \alpha$ or $\underline{V}_0(\Tilde{x}_0)\geq \alpha$}{
      Use method in Proposition \ref{pro_Border_cases_method}
} 
\texttt{\\}
\tcc{Run recursion}
$\underline{\lambda}\leftarrow 0$, $\lambda \leftarrow  \overline{\lambda} \leftarrow$ Proposition \ref{thm_upper_lower_bound_on_lambda} \;
\While{true}{
    $p_{\overline{\lambda}} \leftarrow$  Equation \eqref{eq_stochastic_policies_ratios} \;
    $\delta\leftarrow p_{\overline{\lambda}}(1-p_{\overline{\lambda}})(\overline{\lambda}-\underline{\lambda})(V_0^{\pi_{\overline{\lambda}}}(\Tilde{x}_0) - V_0^{\pi_{\underline{\lambda}}}(\Tilde{x}_0))$ \;
    \If{$\delta \leq \Delta$}{
        return $p_{\overline{\lambda}},\pi_{\underline{\lambda}},\pi_{\overline{\lambda}}$ \;
    }
    $V_0^{\pi_{\lambda}}(\Tilde{x}_0), \pi_{\lambda} \gets $ Theorem \ref{thm_our_Dp-recursion} \; 
     \eIf{$V_0^{\pi_{\lambda}}(\Tilde{x}_0)\leq \alpha$}{
        $\underline{\lambda} \leftarrow  \lambda$ \;
     }{
        $\overline{\lambda} \leftarrow  \lambda$ \; 
          
     }
     $\lambda\leftarrow \frac{1}{2}(\overline{\lambda} + \underline{\lambda})$ \;
}
\label{alg_final}
\end{algorithm}
\else
\begin{algorithm}
\caption{Joint Chance Constr. Optimal Control}\label{alg:two}
$\underline{\lambda}\leftarrow 0$, $\lambda \leftarrow  \overline{\lambda} \leftarrow$ Proposition \ref{thm_upper_lower_bound_on_lambda} \;
\While{true}{
    $p_{\overline{\lambda}} \leftarrow$  Equation \eqref{eq_stochastic_policies_ratios} \;
    $\delta\leftarrow p_{\overline{\lambda}}(1-p_{\overline{\lambda}})(\overline{\lambda}-\underline{\lambda})(V_0^{\pi_{\overline{\lambda}}}(\Tilde{x}_0) - V_0^{\pi_{\underline{\lambda}}}(\Tilde{x}_0))$ \;
    \lIf{$\delta \leq \Delta$}{
        return $p_{\overline{\lambda}},\pi_{\underline{\lambda}},\pi_{\overline{\lambda}}$ 
    }
    $V_0^{\pi_{\lambda}}(\Tilde{x}_0), \pi_{\lambda} \gets $ Theorem \ref{thm_our_Dp-recursion} \; 
    \lIf{$V_0^{\pi_{\lambda}}(\Tilde{x}_0)\leq \alpha$}
    {
        $\underline{\lambda} \leftarrow  \lambda$ 
    }
    \lElse
    {
        $\overline{\lambda} \leftarrow  \lambda$           
    }
     $\lambda\leftarrow \frac{1}{2}(\overline{\lambda} + \underline{\lambda})$ \;
}
\label{alg_final}
\end{algorithm}
\fi

\section{Numerical Example}
\label{sec_num_example}

We compare our algorithmic solution to the one proposed by \cite{Ono_2}. Following \cite{Ono_2}, the application of  Boole's inequality $\mathbb{P}_{x_0}^\pi(x_{0:N}\in \mathcal{A}) \geq 1-\sum_{k=0}^N \mathbb{P}_{x_0}^\pi(x_k \in \mathcal{A}^c)$ to Problem \eqref{eq_problemFormulation_CCOC} leads to the following Lagrangian dual
\begin{align}
\begin{split}
    \max_{\lambda \in \mathbb{R}_{\geq 0}} \inf_{\pi \in \Pi_{\text{dm}}} \quad  &\mathbb{E}_{x_{0}}^\pi\left[ \ell_N(x_N) + \sum_{k=0}^{N-1} \ell_k(x_k,u_k) \right] \\
    &+ \lambda\left(\sum_{k=0}^N \mathbb{P}(x_{k}\in \mathcal{A}^c)-1+\alpha\right),\end{split}
    \label{eq_dual_problem_for_ono_variant}
\end{align} 
which displays a stage-wise decomposition structure. This allows the direct application of DP to compute the infimum, by using the terminal and stage-cost 
\begin{align}
    \begin{split}
    \ell_{\lambda,N}(x_N) &=\ell_N(x_N) + \lambda \mathbb{1}_{\mathcal{A}^c}(x_N),\\
    \ell_{\lambda,k}(x_k,u_k) &= \ell_k(x_k,u_k) + \lambda\mathbb{1}_{\mathcal{A}^c}(x_k).
    \end{split}
    \label{eq_dp_recursion_ono}
\end{align}
We compare this approach to the one proposed here via two examples on unicycle trajectory planning and one example on fishery management. The code used to generate the results is available at \url{https://github.com/NiklasSchmidResearch/JCC_opt_control.git}. 

\smallskip 
\begin{example}[Unicycle Trajectory Planning] 
\label{ex_quadcopter1} 
We consider an Euler discretized unicycle model with constant speed of $3$, given by $x_{k+1} = x_k + 3\begin{bmatrix}\cos(u) & \sin(u)\end{bmatrix}^{\top} + w_k$, where $w_k=\mathcal{N}([0 \ 0]^{\top},\text{diag}(5,5))$ is an additive disturbance, $u\in[0,2\pi]$ is the input, and $x_0=\begin{bmatrix}13 & 13\end{bmatrix}^{\top}$. We restrict the state space to $x_k\in\mathcal{X}=[-25,25]\times[-25,25]$ and aim to minimize the cost $\ell_k(x_k,u_k) = x_k^{\top}x_k,$ for $k \in [20]$, while staying in the set $\mathcal{A}$ with a predefined probability.


\ifsingleCol
    \begin{figure}[tbp!]
    \centering
        \begin{subfigure}[t]{0.49\columnwidth}
        \centering
        \includegraphics[width=.7\columnwidth]{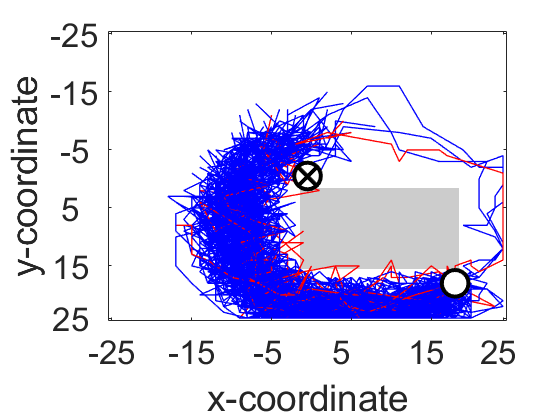}
        
        \end{subfigure}%
        \hfill
        \begin{subfigure}[t]{0.49\columnwidth}
        
        \centering
        \includegraphics[width=0.7\columnwidth]{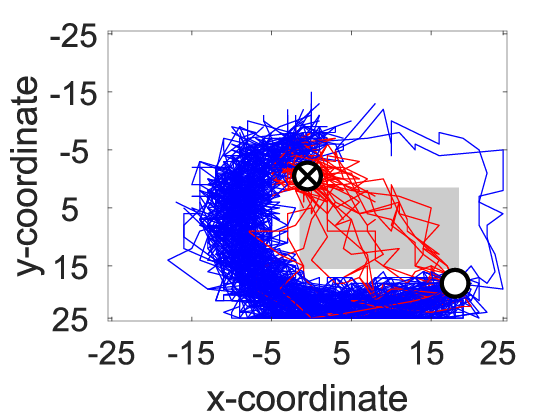}
        \end{subfigure}%
        \caption{Trajectories of $150$ Monte Carlo simulations of the unicycle in Example \ref{ex_quadcopter1}. The white area denotes the safe set $\mathcal{A}$, while the grey region should be avoided. The black circle denotes the initial state, the black circle with the cross the origin, which serves as the target state with zero stage cost. Blue curves denote safe trajectories, while red curves denote unsafe trajectories. The left plot shows the results for the policy of \cite{Ono_2}, the right plot the policy obtained by the recursion in Theorem \ref{thm_our_Dp-recursion}. Despite the different behaviour, both policies provide the same safety probability of $0.9$; the one on the right, however, attains a lower average cost. }
        \label{fig_trajectoryPlotsWithBox}
    \end{figure}
\else
    \ifNonArxivVersion
        \begin{figure}[tbp!]
        \centering
            \begin{subfigure}[t]{0.49\columnwidth}
            \centering
            \includegraphics[width=1\columnwidth]{graphics_double_col/traj_ono.png}
            
            \end{subfigure}%
            \hfill
            \begin{subfigure}[t]{0.49\columnwidth}
            
            \centering
            \includegraphics[width=1\columnwidth]{graphics_double_col/traj_our.png}
            \end{subfigure}%
            \caption{Trajectories of $150$ simulations of the unicycle in Example \ref{ex_quadcopter1}. The white area is the safe set $\mathcal{A}$, grey unsafe. The black circle is the initial state, the black crossed circle the origin. Blue curves are safe trajectories, red curves unsafe. The left plot shows the policy of \cite{Ono_2}, the right plot shows the policy from Theorem \ref{thm_our_Dp-recursion}. Both have a safety of 0.9, but the right one has a lower average cost.
            }
            \label{fig_trajectoryPlotsWithBox}
        \end{figure}
    \else
        \begin{figure}[tbp!]
            \centering
            \includegraphics[height=3.1cm]{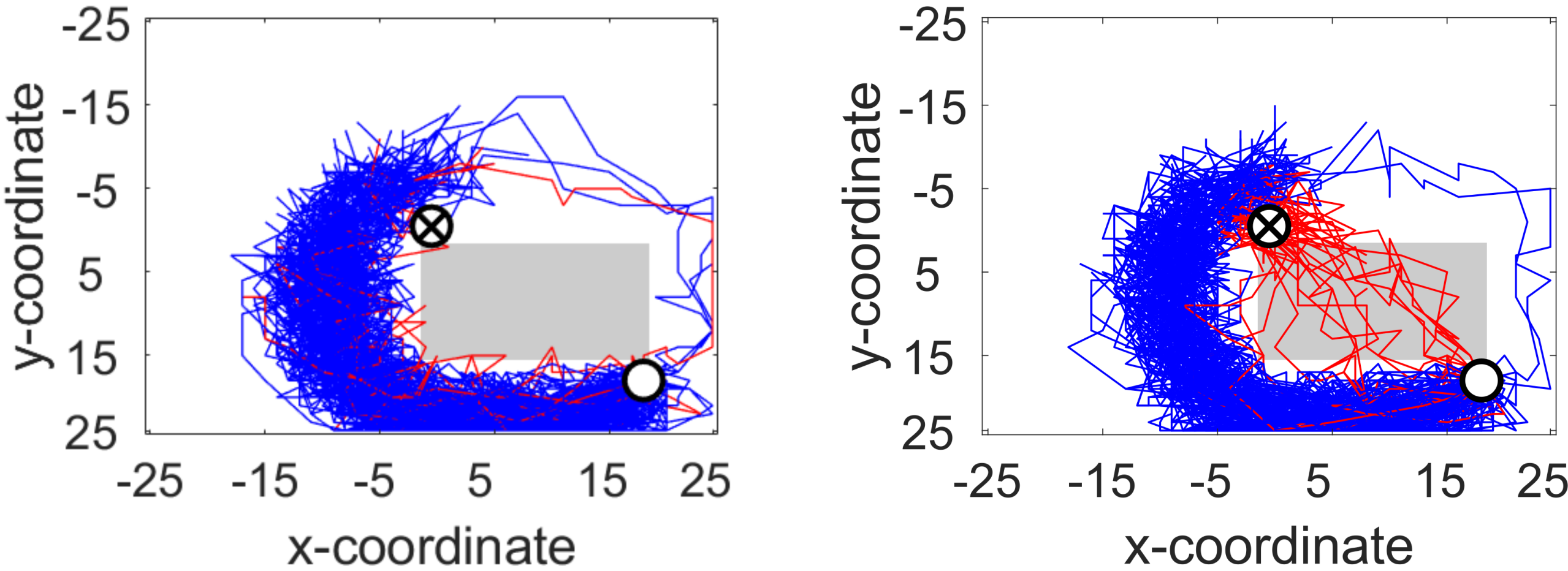}
            \caption{Trajectories of $150$ Monte Carlo simulations of the unicycle in Example \ref{ex_quadcopter1}. The white area denotes the safe set $\mathcal{A}$, while the grey region should be avoided. The black circle denotes the initial state, the black circle with the cross the origin, which serves as the target state with zero stage cost. Blue curves denote safe trajectories, while red curves denote unsafe trajectories. The left plot shows the results for the policy of \cite{Ono_2}, the right plot the policy obtained by the recursion in Theorem \ref{thm_our_Dp-recursion}. Despite the different behaviour, both policies provide the same safety probability of $0.9$; the one on the right, however, attains a lower average cost. }
            \label{fig_trajectoryPlotsWithBox}
        \end{figure}
    \fi
\fi


\rev{Note that the state and action spaces of the system are continuous, which causes the DP recursions \eqref{eq_our_dp_recursion_innerdual} and \eqref{eq_dp_recursion_ono} to be infinite dimensional problems. To enable computation, we grid the state space into $50\times50$ discrete states and the action space into $8$ discrete inputs following the procedure described in \cite{abate2010approximate}.} \rev{We then compare the policies generated by recursions \eqref{eq_dp_recursion_ono} and \eqref{eq_our_dp_recursion_innerdual}. Note that the safety probability of the policies resulting from both recursions scale monotonically with $\lambda$. Similar to Algorithm \ref{alg_final}, we run a bisection on $\lambda$ to achieve policies with a safety probability of $0.9$ for both, \eqref{eq_dp_recursion_ono} and \eqref{eq_our_dp_recursion_innerdual}. The bisection gives rise to $\lambda=8205$ and $\lambda=12645$, respectively.} 
\iffullversion
The resulting policies are shown in Fig.~\ref{fig_policyPlotsWithBox}. \fi Indeed, over $150$ Monte Carlo runs (Fig.~\ref{fig_trajectoryPlotsWithBox}) both policies achieve a safety probability of $0.9$, however the policy developed in this work achieves a lower average cost of $7260$ vs. $7580$. This is because the policy from \cite{Ono_2} tends to leave the restricted area as fast as possible after entering it. On the other hand, our policy keeps transitioning through the restricted area once it enters, since as soon as the trajectory is tagged as unsafe the policy only aims to minimize the expected cost. This difference is due to the structure of the DP recursions employed by the two algorithms. In our DP recursion (see Theorem \ref{th_recursion}) we enforce a one-time penalty of $\lambda$ if the trajectory runs unsafe, independent of the duration it remains unsafe, while in \eqref{eq_dp_recursion_ono} the policy incurs a penalty of $\lambda$ for every time-step that the state is outside the safe set. \rev{Similar incentive violations have been observed in reinforcement learning, where an initially myopic agent may exploit short-term rewards unaware of jeopardizing its long-term performance \cite{bonyadi2022self}. This effect disappears after sufficient training once the optimal policy is learned. In constrained MDPs as ours, however, even the optimal policy might occasionally require violating safety constraints intentionally.}

\iffullversion
\begin{figure*}
\centering
    \begin{tikzpicture}
        \node[inner sep=0pt] (russell) at (-5,0)
        {\includegraphics[trim=6cm 10cm 5cm 10cm, clip,width=4cm]{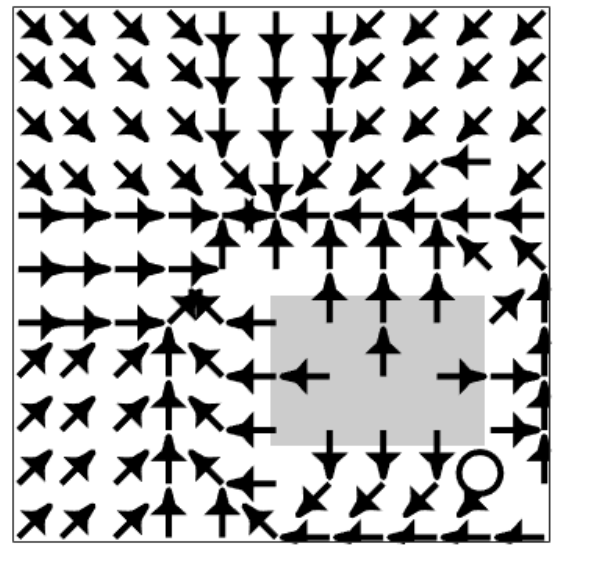}};
        \node[inner sep=0pt] (russell) at (1,0)
        {\includegraphics[trim=6cm 10cm 5cm 10cm, clip,width=4cm]{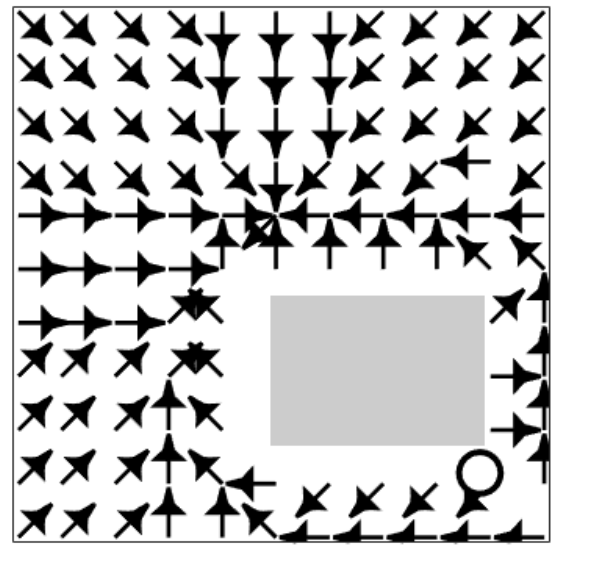}};
        \node[inner sep=0pt] (russell) at (5,0)
        {\includegraphics[trim=6cm 10cm 5cm 10cm, clip,width=4cm]{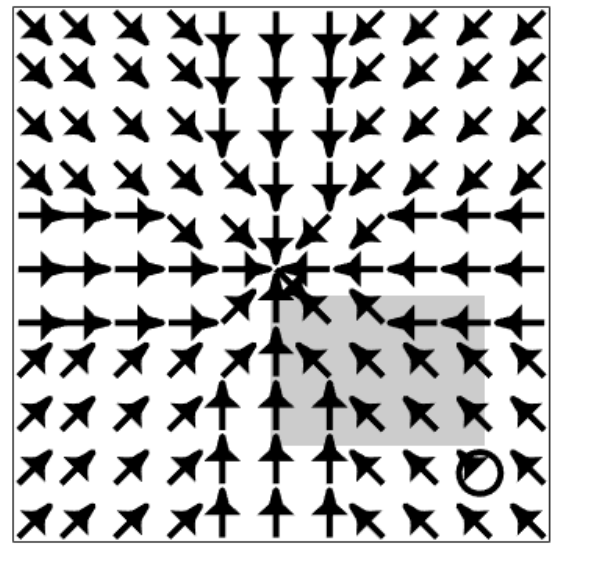}};
        \draw (-.4,1.72) node [fill=white,inner sep=1pt]{$b_k=1$};
        \draw (3.6,1.72) node [fill=white,inner sep=1pt]{$b_k=0$};
        \draw (-4.95,-2.2) node [fill=white,inner sep=1pt]{Approach by \cite{Ono_2}};
        \draw (3.1,-2.2) node [fill=white,inner sep=1pt]{Our approach};
    \end{tikzpicture}
    \caption{Optimal inputs  at time-step $k=0$ associated to the recursion in \eqref{eq_dp_recursion_ono} (left) and $\pi_{\overline{\lambda}}$ (middle for $b_k=1$ and right for $b_k=0$) for Example \ref{ex_quadcopter1}. Safe states are marked white, unsafe states grey. The initial state is marked with a black circle. }
    \label{fig_policyPlotsWithBox}
\end{figure*}
\fi

\end{example}

The unsafe set is now replaced in the middle of the state space so that the cheapest states are unsafe. The initial state is $x_0=\begin{bmatrix}19 & 19\end{bmatrix}^{\top}$ (see the left plot in Fig.~\ref{fig_pareto_front_plot}). To compare our DP recursion in Theorem \ref{thm_our_Dp-recursion} with that in \cite{Ono_2} we evaluate the Pareto fronts of the two recursions by varying $\lambda\in[100,10^6]$ (see the right plot in Fig.~\ref{fig_pareto_front_plot}). We compare 
\begin{itemize}
    \item (\color{niceGreen}Dashed\color{black}): Evaluating the policy via Theorem \ref{thm_our_Dp-recursion} and its associated safety via \eqref{eq_max_invariance_evaluation} (our approach).
    \item (\color{blue}Dash-Dotted\color{black}): Evaluating the policy via  \eqref{eq_dp_recursion_ono} and its associated safety via \eqref{eq_max_invariance_evaluation}. Compared to the dashed graph, this shows the conservativeness introduced by Boole's inequality in the DP recursion \eqref{eq_dp_recursion_ono}.
    \item (\color{red}Dotted\color{black}) Evaluating the policy via \eqref{eq_dp_recursion_ono} and its associated safety via Boole's inequality \cite{Ono_2}. Comparing the dotted with the dash-dotted graph yields the conservatism of Boole's inquality for evaluating the safety of a policy, comparing the dotted with the dashed graph the performance difference between the approach by \cite{Ono_2} and ours.
\end{itemize}


Clearly, the dashed graph dominates the dash-dotted graph, which in turn dominates the dotted graph, showing the effectiveness of our approach in reducing conservatism. For $\lambda=0$, all methods are expected to generate policies that achieve the minimum achievable cost, as constraint violations are not penalized. This can be seen by the dashed and dash-dotted graph converging for small probabilities (the dotted graph achieves this cost at negative safety probabilities). 

Interestingly, the Pareto fronts might not necessarily converge for high values of $\lambda$%
\iffullversion{. Consider the fictitious example with two policies $\pi_1, \pi_2$, where $\pi_1$ is the safest policy and has a probability of $0.1$ to become unsafe at the first time-step and zero otherwise, and $\pi_2$ has a probability of $0.2$ to be unsafe at the very last time-step and zero for all other time-steps. Let $N=3$. Even if $C_0^{\pi_1}(\Tilde{x}_0)=C_0^{\pi_2}(\Tilde{x}_0)$, the DP recursion in \cite{Ono_2} will prefer policy $\pi_2$ for any $\lambda>0$ since the incurred penalty will be $0.3\lambda$ and $0.2\lambda$ for policy $\pi_1$ and $\pi_2$, respectively. Thus, although $\pi_1$ might be the safest policy, it will never be considered optimal in \cite{Ono_2}, even for arbitrarily large $\lambda$. In our example, this can be observed by the graphs approaching, but never really touching each other for higher safety values. }
\else%
\ \rev{(see \cite{schmid2023computing} for more details).}
\fi 
\iffullversion
The only exception is when a safety of one is attainable since the conservatism of Boole's inequality converges to zero as the safety approaches one. However, if we aim for lower requirements on safety, or if the highest achievable safety is far enough from one, the conservatism becomes significant. This can especially be observed for small safety values, e.g., to guarantee a safety of $0.1$, the approach by \cite{Ono_2} proposes a policy with cost $4113$, while the DP recursion \eqref{eq_dp_recursion_ono} is able to compute a policy with cost $3330$ and our algorithm finds a policy with cost $1522$. 
\else
The conservatism becomes especially significant for low safety probabilities. To guarantee a safety of $0.1$, the approach by \cite{Ono_2} proposes a policy with cost $4113$, while the DP recursion \eqref{eq_dp_recursion_ono} is able to compute a policy with cost $3330$ and our algorithm finds a policy with cost $1522$. 
\fi

We also tested Algorithm \ref{alg_final} using $\alpha=0.6$ and a suboptimality bound of $\Delta=10^{-6}$, achieving $\Delta$ after $20$ iterations. For the approach in \cite{Ono_2} we ran a fixed number of $30$ iterations. Our approach returned a policy with cost $4122$ compared to $5822$ using the approach in \cite{Ono_2}. 

\ifsingleCol
    \begin{figure}[!tbh]
        \centering    
        \includegraphics[height=5cm]{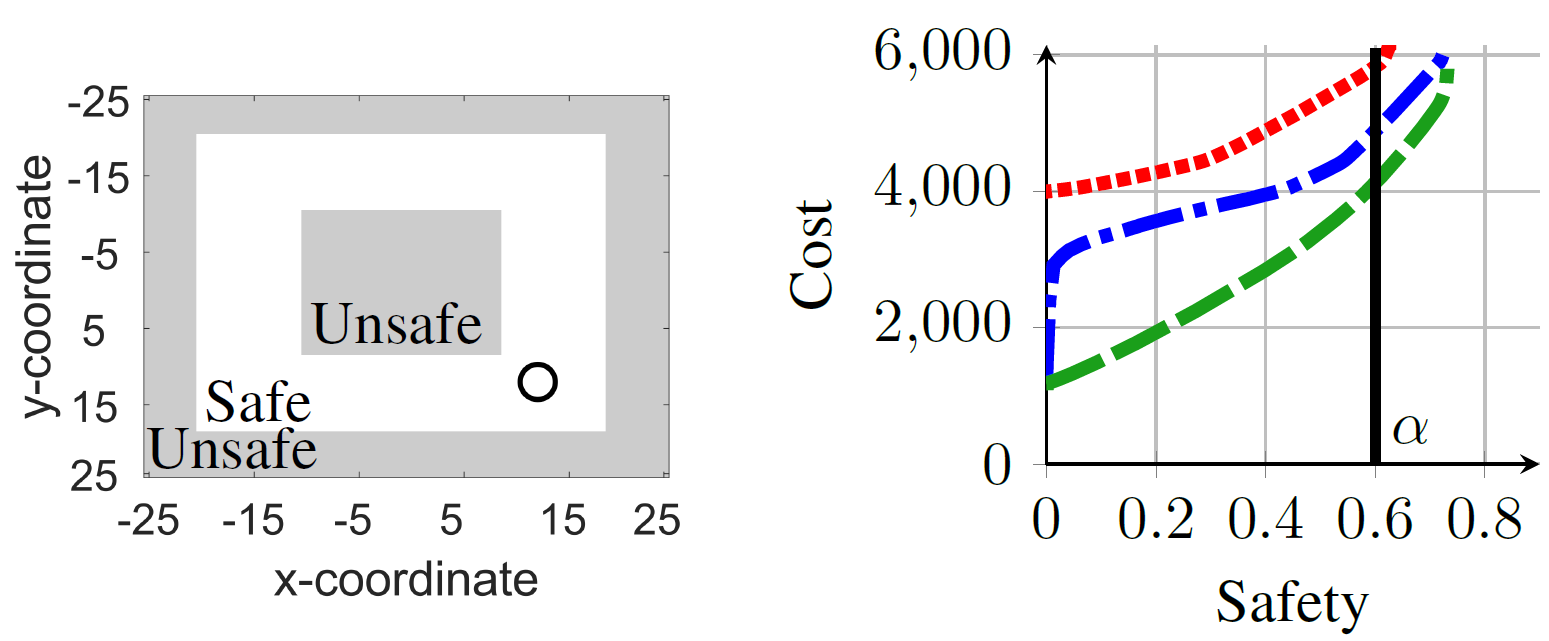}
        \caption{The left plot shows the safe (white) and unsafe (grey) regions of the state space and the initial state (black circle) in example \ref{ex_quadcopter_2}. The right plot shows the Pareto fronts when using our algorithm (\color{niceGreen}Dashed\color{black}), our algorithm but replacing the DP recursion with \eqref{eq_dp_recursion_ono} (\color{blue}Dash-dotted\color{black}), and when using the full approach in \cite{Ono_2} (\color{red}Dotted\color{black}). In fact, the dotted graph continuous to negative probabilities due to convervativeness of Boole's inequality. }
        \label{fig_pareto_front_plot}
    \end{figure}
\else
    \ifNonArxivVersion
        \begin{figure}[!tbh]
            \centering    
            \begin{subfigure}[c]{0.4\columnwidth} 
            \begin{tikzpicture}
                \node[inner sep=0pt] (russell) at (0,0) {
                \includegraphics[width=\columnwidth]{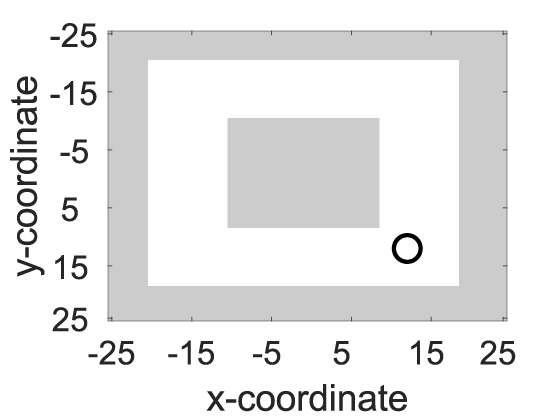}     
                };
                \draw (0.15, .05) node {Unsafe};
                \draw (-.5, -.3) node {Safe};
                \draw (-.6, -.57) node {Unsafe};
            \end{tikzpicture}
            \end{subfigure}
            \hfill
            \begin{subfigure}[c]{0.5\columnwidth}    
            \begin{tikzpicture}
                \begin{axis}[height=3.2 cm,width=\columnwidth,ymin=0,xmin=0,xmax=0.9,axis lines=left,grid=major,xlabel={Safety}, ylabel={Cost}]
                    \addplot +[blue,line width=0.8mm, mark=none,dash pattern={on 10pt off 2pt on 2pt off 2pt}] coordinates {(0.0000,1233.0710)(0.0000,1272.8835)(0.0000,1311.3944)(0.0003,1450.4191)(0.0051,2406.1573)(0.0130,2917.4791)(0.0281,3049.5090)(0.0313,3070.8899)(0.0415,3135.7221)(0.0629,3220.4099)(0.0940,3316.8170)(0.1421,3426.1262)(0.1576,3466.2892)(0.1655,3483.2946)(0.1708,3499.6366)(0.1910,3545.1290)(0.2254,3620.1778)(0.2771,3723.3211)(0.3260,3814.2605)(0.3667,3885.1327)(0.3742,3900.3404)(0.3793,3910.9159)(0.4152,3982.4221)(0.4178,3988.5083)(0.4325,4022.9898)(0.4347,4029.1623)(0.4410,4046.6957)(0.4424,4051.0027)(0.4462,4063.7663)(0.4553,4095.9725)(0.4859,4209.1153)(0.5064,4289.0253)(0.5307,4388.1575)(0.5349,4407.5925)(0.5385,4425.0266)(0.5417,4442.5795)(0.5448,4460.6863)(0.5473,4476.6480)(0.5503,4497.2008)(0.5603,4572.3413)(0.5770,4700.2196)(0.6300,5130.3751)(0.6918,5666.0464)(0.7031,5769.9766)(0.7161,5881.7966)(0.7163,5883.0683)(0.7164,5885.6479)(0.7170,5892.9705)(0.7178,5902.2202)(0.7180,5906.0590)(0.7194,5922.6046)(0.7200,5932.6828)(0.7201,5933.3858)(0.7202,5935.7580)(0.7202,5935.8609)(0.7206,5943.6004)(0.7210,5953.0010)(0.7254,6044.5395)(0.7272,6082.6391)(0.7275,6091.0061)(0.7291,6132.2185)(0.7291,6132.7110)(0.7291,6133.7411)(0.7291,6133.8689)(0.7291,6134.0322)(0.7291,6134.0334)(0.7291,6134.0348)(0.7291,6134.0349)(0.7291,6134.1492)(0.7291,6134.4550)(0.7291,6134.5107)(0.7291,6135.4899)(0.7291,6135.7577)(0.7291,6135.8664)(0.7291,6136.1223)(0.7291,6136.1263)(0.7291,6136.2292)(0.7291,6136.6871)(0.7291,6136.6897)(0.7291,6136.6899)(0.7292,6138.7205)(0.7292,6139.1028)(0.7292,6139.1035)(0.7292,6140.9453)(0.7292,6140.9457)(0.7292,6140.9459)(0.7292,6140.9459)(0.7292,6140.9461)(0.7292,6140.9461)(0.7292,6140.9465)(0.7292,6140.9594)(0.7292,6140.9594)(0.7292,6140.9594)(0.7292,6140.9594)(0.7292,6141.3683)(0.7292,6141.3683)(0.7292,6141.3685)(0.7292,6141.3685)(0.7292,6141.3685)(0.7292,6141.3685)};
                    \addplot +[line width=0.8mm, mark=none,niceGreen,dash pattern={on 10pt off 2pt}] coordinates {(0.0000,1185.9964)(0.0000,1185.9964)(0.0000,1185.9965)(0.0000,1185.9965)(0.0000,1185.9965)(0.0000,1185.9965)(0.0000,1185.9966)(0.0000,1185.9966)(0.0000,1185.9966)(0.0000,1185.9967)(0.0000,1185.9968)(0.0000,1185.9971)(0.0000,1185.9973)(0.0000,1185.9975)(0.0000,1185.9977)(0.0000,1185.9980)(0.0000,1185.9987)(0.0000,1185.9997)(0.0000,1186.0013)(0.0000,1186.0021)(0.0000,1186.0047)(0.0000,1186.0100)(0.0000,1186.0136)(0.0000,1186.0205)(0.0001,1186.0388)(0.0001,1186.0518)(0.0001,1186.0864)(0.0002,1186.1371)(0.0002,1186.2255)(0.0003,1186.3728)(0.0005,1186.6433)(0.0008,1187.1092)(0.0014,1188.2520)(0.0021,1189.7998)(0.0036,1193.2356)(0.0065,1200.2187)(0.0128,1217.6433)(0.0245,1252.8175)(0.0474,1328.5788)(0.0902,1484.1582)(0.1637,1775.4582)(0.2724,2245.1160)(0.3958,2826.9517)(0.4488,3103.0551)(0.4765,3261.0908)(0.5284,3589.1339)(0.5550,3772.3517)(0.5834,3987.0008)(0.6127,4231.3261)(0.6483,4556.8143)(0.6868,4939.6100)(0.7085,5176.2521)(0.7148,5250.9621)(0.7180,5292.5027)(0.7189,5305.7662)(0.7197,5318.6884)(0.7207,5336.4006)(0.7220,5360.6431)(0.7231,5383.9020)(0.7246,5417.7375)(0.7263,5461.6095)(0.7281,5512.5636)(0.7288,5532.0729)(0.7291,5543.8091)(0.7291,5544.4333)(0.7291,5544.5873)(0.7291,5545.2712)(0.7292,5547.9187)(0.7292,5549.2226)(0.7293,5551.2386)(0.7293,5555.7077)(0.7294,5563.4681)(0.7296,5574.9398)(0.7297,5587.8304)(0.7299,5607.1472)(0.7313,5742.4421)(0.7313,5742.4824)(0.7315,5765.7902)(0.7317,5786.0301)(0.7317,5787.8644)(0.7317,5787.9062)(0.7317,5787.9607)(0.7317,5787.9694)(0.7317,5787.9831)(0.7317,5788.0121)(0.7317,5788.0881)(0.7317,5788.1484)(0.7317,5788.1485)(0.7317,5788.1485)(0.7317,5788.1522)(0.7317,5788.1850)(0.7317,5788.1850)(0.7317,5788.1850)(0.7317,5788.5073)(0.7317,5793.6501)(0.7317,5793.9398)(0.7317,5793.9398)(0.7317,5793.9398)(0.7317,5794.2202)(0.7317,5794.2202)};
                    \addplot +[line width=0.8mm, mark=none,red,densely dotted] coordinates {(-0.0156,3982.4221)(-0.0081,3988.5083)(0.0306,4022.9898)(0.0369,4029.1623)(0.0532,4046.6957)(0.0569,4051.0027)(0.0667,4063.7663)(0.0890,4095.9725)(0.1612,4209.1153)(0.2078,4289.0253)(0.2613,4388.1575)(0.2708,4407.5925)(0.2787,4425.0266)(0.2859,4442.5795)(0.2925,4460.6863)(0.2979,4476.6480)(0.3041,4497.2008)(0.3247,4572.3413)(0.3571,4700.2196)(0.4557,5130.3751)(0.5699,5666.0464)(0.5901,5769.9766)(0.6099,5881.7966)(0.6101,5883.0683)(0.6104,5885.6479)(0.6114,5892.9705)(0.6125,5902.2202)(0.6129,5906.0590)(0.6146,5922.6046)(0.6155,5932.6828)(0.6156,5933.3858)(0.6158,5935.7580)(0.6158,5935.8609)(0.6163,5943.6004)(0.6168,5953.0010)(0.6214,6044.5395)(0.6233,6082.6391)(0.6236,6091.0061)(0.6252,6132.2185)(0.6253,6132.7110)(0.6253,6133.7411)(0.6253,6133.8689)(0.6253,6134.0322)(0.6253,6134.0334)(0.6253,6134.0348)(0.6253,6134.0349)(0.6253,6134.1492)(0.6253,6134.4550)(0.6253,6134.5107)(0.6253,6135.4899)(0.6253,6135.7577)(0.6253,6135.8664)(0.6253,6136.1223)(0.6253,6136.1263)(0.6253,6136.2292)(0.6253,6136.6871)(0.6253,6136.6897)(0.6253,6136.6899)(0.6253,6138.7205)(0.6253,6139.1028)(0.6253,6139.1035)(0.6254,6140.9453)(0.6254,6140.9457)(0.6254,6140.9459)(0.6254,6140.9459)(0.6254,6140.9461)(0.6254,6140.9461)(0.6254,6140.9465)(0.6254,6140.9594)(0.6254,6140.9594)(0.6254,6140.9594)(0.6254,6140.9594)(0.6254,6141.3683)(0.6254,6141.3683)(0.6254,6141.3685)(0.6254,6141.3685)(0.6254,6141.3685)(0.6254,6141.3685)};
                    \addplot +[line width=.6mm, mark=none,black] coordinates {(0.6,0)(0.6,6100)};
                \end{axis}
                \draw (2.1, 0.2) node {\color{black}$\alpha$\color{black}};
            \end{tikzpicture}
            \end{subfigure}
            \caption{The left plot shows the safe (white) and unsafe (grey) states and the initial state (black circle) in the second unicycle example. The right plot shows the Pareto fronts when using our algorithm (\color{niceGreen}Dashed\color{black}), our algorithm, replacing the DP recursion with \eqref{eq_dp_recursion_ono} (\color{blue}Dash-dotted\color{black}), and the approach in \cite{Ono_2} (\color{red}Dotted\color{black}). The dotted graph continuous to negative probabilities due to convervativeness of Boole's inequality (not plotted). }
            \label{fig_pareto_front_plot}
        \end{figure}
    \else
        \begin{figure}[!tbh]
            \centering    
            \includegraphics[height=3.5cm]{graphics_single_col/fig8.png}
            \caption{The left plot shows the safe (white) and unsafe (grey) regions of the state space and the initial state (black circle) in example \ref{ex_quadcopter_2}. The right plot shows the Pareto fronts when using our algorithm (\color{niceGreen}Dashed\color{black}), our algorithm but replacing the DP recursion with \eqref{eq_dp_recursion_ono} (\color{blue}Dash-dotted\color{black}), and when using the full approach in \cite{Ono_2} (\color{red}Dotted\color{black}). In fact, the dotted graph continuous to negative probabilities due to convervativeness of Boole's inequality. }
            \label{fig_pareto_front_plot}
        \end{figure}
    \fi
\fi

\smallskip
\begin{example}[Fisheries Management]
\label{ex_blubbblubb}
We adapt the fisheries management example from \cite{summers_1} using the model from \cite{pitchford_1}. The evolution of fish biomass $x_k$ in a reservoir is given by
\begin{align*}
    x_{k+1}=(1-v_k)x_k + \gamma_k R(x_k) - F(x_k,u_k),
\end{align*}
where $R(\cdot)$, $F(\cdot,\cdot)$ are functions representing recruitment and catch, $v_k\sim \mathcal{N}(0.2,0.01)$ the natural mortality rate and $\gamma_k\sim \mathcal{N}(1,0.36)$ variability in the recruitment. The catch function is imposed by the government and described by 
\begin{align*}
    F(x_k,u_k) = \max\left\{\delta_ku_kC, 
 \ \delta_ku_kC\frac{x_k}{L}\right\},
\end{align*}
where $\delta_k\sim \mathcal{N}(1.1,0.04)$ is a variability in the catch with $\delta_k>0$, $u_k\in[0,1]$ our input variable denoting catch effort, $F$ the maximum catch, and $L$ the biomass limit of the reservoir. The recruitment function is described by
\begin{align*}
    R(x_k) = x_k\left(1-\frac{x_k}{L}\right)\text{sgm}\left(\frac{x_k-\mu}{\sigma^2}\right),
\end{align*}
where the term $(1-x_k/L)$ models decline in recruitment with saturation of the reservoir. The sigmoid $\text{sgm}(x)=\frac{1}{1+e^{-x}}$ has been added here to introduce a bifurcation in the dynamics and could model a rapid decline in recruitment when the population becomes too small. 

We use $L=40$, $M=10$, $\mu=20$, $\sigma=5$. Empirically, using Monte Carlo simulations, we found that the fish population is at almost zero probability of recovering whenever the biomass surpasses $13$ units, even if we stop fishing. Our goal is to catch as much fish as possible within $100$ time-steps, while guaranteeing a probability of at least $\alpha=0.75$ to preserve at least $13$ units of biomass throughout. We define $\mathcal{X}=[0,60]$, discretize the state space into $60$ discrete states, the input space $\mathcal{U}=[0,1]$ into $6$ discrete inputs and use $\Delta=10^{-6}$. 

Due to the conservatism of Boole's inequality, a solution to this problem would be infeasible in \cite{Ono_2}. However, to allow for a comparison with our results, we again run our algorithm, replacing our DP recursion with \eqref{eq_dp_recursion_ono}. The policies computed this way and with our method are depicted in Fig.~\ref{fig_fish_policies}.

\begin{figure*}[t!]
    \centering
    \begin{tikzpicture}
        \draw (2.5, .25) node {$k=0$};
        \begin{axis}[width=0.25\textwidth,height=3.4cm,%
                    ylabel={Catch Effort}, xlabel={Biomass $x_0$}, ylabel shift = -.1cm, xlabel shift = -.1cm, axis x line = bottom,axis y line = left]
        \addplot +[mark=none,olive, very thick] coordinates {(13.0000,1.0000)(14.0000,1.0000)(15.0000,0.0000)(16.0000,0.0000)(17.0000,0.0000)(18.0000,0.0000)(19.0000,0.0000)(20.0000,0.0000)(21.0000,0.0000)(22.0000,0.0000)(23.0000,0.0000)(24.0000,0.0000)(25.0000,0.0000)(26.0000,0.0000)(27.0000,0.0000)(28.0000,0.2000)(29.0000,0.4000)(30.0000,0.4000)(31.0000,0.4000)(32.0000,0.4000)(33.0000,0.6000)(34.0000,0.6000)(35.0000,0.8000)(36.0000,0.8000)(37.0000,0.6000)(38.0000,0.6000)(39.0000,0.8000)(40.0000,0.8000)(41.0000,0.6000)(42.0000,0.8000)(43.0000,0.8000)(44.0000,1.0000)(45.0000,0.8000)(46.0000,1.0000)(47.0000,1.0000)(48.0000,1.0000)(49.0000,1.0000)(50.0000,1.0000)(51.0000,1.0000)(52.0000,1.0000)(53.0000,1.0000)(54.0000,1.0000)(55.0000,1.0000)(56.0000,1.0000)(57.0000,1.0000)(58.0000,1.0000)(59.0000,1.0000)(60.0000,1.0000)};
        \addplot +[mark=none,blue, densely dotted, very thick] coordinates {(13.0000,1.0000)(14.0000,0.0000)(15.0000,0.0000)(16.0000,0.0000)(17.0000,0.0000)(18.0000,0.0000)(19.0000,0.0000)(20.0000,0.0000)(21.0000,0.0000)(22.0000,0.0000)(23.0000,0.0000)(24.0000,0.0000)(25.0000,0.0000)(26.0000,0.0000)(27.0000,0.0000)(28.0000,0.0000)(29.0000,0.2000)(30.0000,0.2000)(31.0000,0.2000)(32.0000,0.4000)(33.0000,0.4000)(34.0000,0.6000)(35.0000,0.4000)(36.0000,0.6000)(37.0000,0.6000)(38.0000,0.6000)(39.0000,0.6000)(40.0000,0.6000)(41.0000,0.6000)(42.0000,0.6000)(43.0000,0.8000)(44.0000,0.8000)(45.0000,0.8000)(46.0000,1.0000)(47.0000,1.0000)(48.0000,1.0000)(49.0000,1.0000)(50.0000,1.0000)(51.0000,1.0000)(52.0000,1.0000)(53.0000,1.0000)(54.0000,1.0000)(55.0000,1.0000)(56.0000,1.0000)(57.0000,1.0000)(58.0000,1.0000)(59.0000,1.0000)(60.0000,1.0000)};
        \end{axis}
    \end{tikzpicture}
    \hfill
    \begin{tikzpicture}
        \draw (2.5, .25) node {$k=49$};
        \begin{axis}[width=0.25\textwidth,height=3.4cm,%
                    ylabel={Catch Effort}, xlabel={Biomass $x_{49}$}, ylabel shift = -.1cm, xlabel shift = -.1cm, axis x line = bottom,axis y line = left]
        \addplot +[mark=none,olive, very thick] coordinates {(13.0000,1.0000)(14.0000,1.0000)(15.0000,1.0000)(16.0000,0.0000)(17.0000,0.0000)(18.0000,0.0000)(19.0000,0.0000)(20.0000,0.0000)(21.0000,0.0000)(22.0000,0.0000)(23.0000,0.0000)(24.0000,0.0000)(25.0000,0.0000)(26.0000,0.0000)(27.0000,0.2000)(28.0000,0.2000)(29.0000,0.4000)(30.0000,0.4000)(31.0000,0.4000)(32.0000,0.4000)(33.0000,0.6000)(34.0000,0.6000)(35.0000,0.8000)(36.0000,0.8000)(37.0000,0.6000)(38.0000,0.8000)(39.0000,0.8000)(40.0000,0.8000)(41.0000,0.6000)(42.0000,0.8000)(43.0000,0.8000)(44.0000,1.0000)(45.0000,0.8000)(46.0000,1.0000)(47.0000,1.0000)(48.0000,1.0000)(49.0000,1.0000)(50.0000,1.0000)(51.0000,1.0000)(52.0000,1.0000)(53.0000,1.0000)(54.0000,1.0000)(55.0000,1.0000)(56.0000,1.0000)(57.0000,1.0000)(58.0000,1.0000)(59.0000,1.0000)(60.0000,1.0000)};
        \addplot +[mark=none,blue, densely dotted, very thick] coordinates {(13.0000,1.0000)(14.0000,0.0000)(15.0000,0.0000)(16.0000,0.0000)(17.0000,0.0000)(18.0000,0.0000)(19.0000,0.0000)(20.0000,0.0000)(21.0000,0.0000)(22.0000,0.0000)(23.0000,0.0000)(24.0000,0.0000)(25.0000,0.0000)(26.0000,0.0000)(27.0000,0.0000)(28.0000,0.2000)(29.0000,0.4000)(30.0000,0.4000)(31.0000,0.4000)(32.0000,0.4000)(33.0000,0.4000)(34.0000,0.6000)(35.0000,0.4000)(36.0000,0.6000)(37.0000,0.6000)(38.0000,0.6000)(39.0000,0.6000)(40.0000,0.6000)(41.0000,0.6000)(42.0000,0.8000)(43.0000,0.8000)(44.0000,0.8000)(45.0000,0.8000)(46.0000,1.0000)(47.0000,1.0000)(48.0000,1.0000)(49.0000,1.0000)(50.0000,1.0000)(51.0000,1.0000)(52.0000,1.0000)(53.0000,1.0000)(54.0000,1.0000)(55.0000,1.0000)(56.0000,1.0000)(57.0000,1.0000)(58.0000,1.0000)(59.0000,1.0000)(60.0000,1.0000)};
        \end{axis}
    \end{tikzpicture}
    \hfill
    \begin{tikzpicture}
        \draw (2.5, .25) node {$k=99$};
        \begin{axis}[width=0.25\textwidth,height=3.4cm,%
                    ylabel={Catch Effort}, xlabel={Biomass $x_{99}$}, ylabel shift = -.1cm, xlabel shift = -.1cm, axis x line = bottom,axis y line = left,
                    legend style={at={(1.2,0.6)},anchor=west}]
        \addplot +[mark=none,olive, very thick] coordinates {(13.0000,0.0000)(14.0000,0.0000)(15.0000,0.0000)(16.0000,0.2000)(17.0000,0.4000)(18.0000,0.4000)(19.0000,0.8000)(20.0000,0.8000)(21.0000,1.0000)(22.0000,1.0000)(23.0000,1.0000)(24.0000,1.0000)(25.0000,1.0000)(26.0000,1.0000)(27.0000,1.0000)(28.0000,1.0000)(29.0000,1.0000)(30.0000,1.0000)(31.0000,1.0000)(32.0000,1.0000)(33.0000,1.0000)(34.0000,1.0000)(35.0000,1.0000)(36.0000,1.0000)(37.0000,1.0000)(38.0000,1.0000)(39.0000,1.0000)(40.0000,1.0000)(41.0000,1.0000)(42.0000,1.0000)(43.0000,1.0000)(44.0000,1.0000)(45.0000,1.0000)(46.0000,1.0000)(47.0000,1.0000)(48.0000,1.0000)(49.0000,1.0000)(50.0000,1.0000)(51.0000,1.0000)(52.0000,1.0000)(53.0000,1.0000)(54.0000,1.0000)(55.0000,1.0000)(56.0000,1.0000)(57.0000,1.0000)(58.0000,1.0000)(59.0000,1.0000)(60.0000,1.0000)};
        \addplot +[mark=none,blue, densely dotted, very thick] coordinates {(13.0000,1.0000)(14.0000,0.0000)(15.0000,0.2000)(16.0000,0.4000)(17.0000,0.6000)(18.0000,0.8000)(19.0000,1.0000)(20.0000,1.0000)(21.0000,1.0000)(22.0000,1.0000)(23.0000,1.0000)(24.0000,1.0000)(25.0000,1.0000)(26.0000,1.0000)(27.0000,1.0000)(28.0000,1.0000)(29.0000,1.0000)(30.0000,1.0000)(31.0000,1.0000)(32.0000,1.0000)(33.0000,1.0000)(34.0000,1.0000)(35.0000,1.0000)(36.0000,1.0000)(37.0000,1.0000)(38.0000,1.0000)(39.0000,1.0000)(40.0000,1.0000)(41.0000,1.0000)(42.0000,1.0000)(43.0000,1.0000)(44.0000,1.0000)(45.0000,1.0000)(46.0000,1.0000)(47.0000,1.0000)(48.0000,1.0000)(49.0000,1.0000)(50.0000,1.0000)(51.0000,1.0000)(52.0000,1.0000)(53.0000,1.0000)(54.0000,1.0000)(55.0000,1.0000)(56.0000,1.0000)(57.0000,1.0000)(58.0000,1.0000)(59.0000,1.0000)(60.0000,1.0000)};
        \addlegendentry{Our approach};
        \addlegendentry{Approach in \cite{Ono_2}}
        \end{axis}
    \end{tikzpicture}
    \caption{\rev{Optimal policies for Example \ref{ex_blubbblubb} at different time-steps using our approach (solid green) and that in \cite{Ono_2} (dotted blue)}.}
    \label{fig_fish_policies}
\end{figure*}

Intuitively, the less fish remains, the less aggressive the catch effort. Interestingly, our approach tends to fish more aggressively early on, while \cite{Ono_2} fishes more aggressively towards the end. Both aim for maximum catch with a risk of crashing the fish population. However, while we penalize violation of constraints once, the penalty is repeated and accumulated in recursion \eqref{eq_dp_recursion_ono} used by \cite{Ono_2}. Thus, our method exploits situations of high risk and tries to fish whatever remains, while the approach by \cite{Ono_2} avoids accumulating penalties until the end, then purposely crashes the population to maximize catch. Overall, our method yields a higher catch of $130.13$ units compared to $121.69$ units at $75\%$ safety.

\iffullversion
In comparison with \cite{Ono_2} our algorithm tends to be computationally slightly more efficient. The reason is that the policy associated with the minimum control cost in \eqref{eq_max_cost_evaluation} is applied until the end of the time horizon whenever $b_k$ is zero, which is independent of $\lambda$ and precomputed by the border case check. Thus, when $\lambda$ is updated via bisection, it suffices run the DP recursions over states which have $b_k=1$, which only exist within the safe set ${\mathcal{A}}$. In \cite{Ono_2}, on the other hand, the policy has to be reevaluated for all states in the state space $\mathcal{X}$ as $\lambda$ changes.
\fi



\end{example}


\section{Conclusions and future works}
\label{sec_conclusion}
\iffullversion
We consider the finite-time optimal control of stochastic systems subject to joint chance constraints and proposed a Dynamic Programming scheme to solve for the respective optimal policy.
Our analysis indicates that policies induced by Problem \eqref{eq_problemFormulation_CCOC} may be controversial for many applications. For instance, Problem \eqref{eq_problemFormulation_CCOC} just considers safety and cost in expectation, meaning over infinitely many trials starting from the initial state. This imposes the controller to deliberately fail on constraints to achieve a cost reduction in expectation, which is also in line with our introduction of a binary state and mixed policies. On the other hand, trying to avoid deliberative failure by adding a constant penalty for unsafe states, as implicitly done in \cite{Ono_2}, is not solving Problem \eqref{eq_problemFormulation_CCOC} to optimality and, furthermore, not necessarily generating a meaningful behaviour either. Problem \eqref{eq_problemFormulation_CCOC} is tailored to applications which reset the system state to the initial state after $N$ time-steps, played infinitely often, and where constraints become redundant once they are violated. Other applications might require a different problem formulation, whose exploration is subject of future work. 
\else
    \rev{We consider the finite-time joint chance constrained optimal control of stochastic systems using DP. Our analysis indicates that constraint violations might be actively promoted to exploit unsafe, low-cost trajectories without potentially reducing the cost of safe trajectories. Alternative problem definitions avoiding such degenerate behaviours will be analysed in future work.}
\fi
\rev{Expanding on our results, LP formulations are proposed in \cite{schmid2024joint}. Beyond gridding, this allows to approximate value functions via linear combinations of basis functions. Exploring scalable frameworks like MPC, to which the approach in \cite{Ono_2} easily extends (see, e.g.,  \cite{ono_5}), would further be interesting.}

\iffullversion
\section{Appendix}
\subsection{Duality theory}\label{sec_duality}
Consider the optimization problem
\begin{equation}\label{eq:primal}
\begin{aligned}
f^\star=\inf_{x \in \mathbb{R}^n} & \quad f(x)\\
\text{subject to} &\quad g_j(x) \leq 0, \quad j = 1,\ldots,r,
\end{aligned}
\end{equation}
where $f:\mathbb{R}^n \rightarrow \mathbb{R}$, $g_j : \mathbb{R}^n \rightarrow \mathbb{R}$. We refer to \eqref{eq:primal} as the primal problem and we denote its value by $f^\star$. The dual problem is given by
\begin{equation}\label{eq:dual}
\begin{aligned}
q^\star=\max_{\lambda_1,\dots,\lambda_r} & \quad q(\lambda)\\
\text{subject to} & \quad \lambda_j \geq 0, \quad j = 1,\ldots,r,
\end{aligned}
\end{equation}
where
$
q(\lambda) = \inf_{x \in \mathbb{R}^n} \{ f(x) + \sum_{j=1}^r \lambda_j g_j(x)\}$ is called the dual function. The function $L(x,\lambda) = f(x) + \sum_{j=1}^r \lambda_j g_j(x) $ is referred to as the Lagrangian. The dual problem is always a convex optimization problem even if the primal
is not convex \cite{boyd_1}. In general, $q^\star \geq f^\star$; if $q^\star = f^\star$ we say that strong duality holds and there is no duality gap. For a given $\overline{\lambda} \in \mathbb{R}^r$, let 
$
x_{\overline{\lambda}} \in \argmin_{x \in \mathbb{R}^n} L(x,\overline{\lambda})
$ be a value minimizing the Lagrangian.
Then, $g(x_{\overline{\lambda}})$ is a subgradient of the dual function $q$ evaluated at $\overline{\lambda}$. The duality theory discussed above can be extended to infinite dimensional spaces, see \cite{Anderson_1}. 
\fi

\bibliographystyle{IEEEtran}
\bibliography{references}

\end{document}